\documentclass[12pt,a4paper,reqno]{amsart}
\usepackage[T1]{fontenc}
\usepackage{amsmath,amssymb,ifthen}
\usepackage{fullpage}
\usepackage{graphicx,psfrag,subfigure}
\usepackage[dvips]{color}
\def\A{\mathbb A}

\def\R{{\mathbb R}}
\def\N{{\mathbb N}}

\def\MM{{\mathcal M}}

\def\OO{{\mathcal O}}
\def\PP{{\mathcal P}}

\def\SS{{\mathcal S}}
\def\TT{{\mathcal T}}

\def\matrix#1{{\boldsymbol{#1}}}
\def\vector#1{{\boldsymbol{#1}}}
\def\operator#1{{\mathcal{#1}}}

\def\diam{{\rm diam}}

\def\norm#1#2{\|#1\|_{#2}}

\def\set#1#2{\big\{#1\,:\,#2\big\}}

\def\eps{\varepsilon}



\newcommand{\normLtwo}[3][]{#1\|#2#1\|_{L^2(#3)}}

\newcommand{\enorm}[2][]{#1|\hspace*{-.5mm}#1|\hspace*{-.5mm}#1|#2#1|\hspace*{-.5mm}#1|\hspace*{-.5mm}#1|}

\def\normL2#1#2{\|#1\|_{L^2(#2)}}
\def\normHme#1#2{\|#1\|_{H^{-1}(#2)}}
\newcommand{\dual}[3][]{#1\langle#2\,,\,#3#1\rangle}

\def\myd{{\rm d}\hspace{-0.6mm}{\rm l}}

\def\Verfuerth{Verf\"urth}

\newcounter{constantsnumber}
\def\namec#1#2{%
 \ifthenelse{\equal{#1}{rel}}{C_{\rm rel}}{%
  \ifthenelse{\equal{#1}{mesh}}{C_{\rm mesh}}{%
  \ifthenelse{\equal{#1}{sz}}{C_{\rm sz}}{%
  \ifthenelse{\equal{#1}{drel}}{C_{\rm dRel}}{%
  \ifthenelse{\equal{#1}{eff}}{C_{\rm eff}}{%
  \ifthenelse{\equal{#1}{continuous}}{C_{\rm cont}}{%
  \ifthenelse{\equal{#1}{opt}}{C_{\rm opt}}{%
  \ifthenelse{\equal{#1}{norm}}{C_{\rm norm}}{%
  \ifthenelse{\equal{#1}{reliable}}{C_{\rm rel}}{%
  \ifthenelse{\equal{#1}{efficient}}{C_{\rm eff}}{%
  \ifthenelse{\equal{#1}{dlr}}{C_{\rm dlr}}{%
  \ifthenelse{\equal{#1}{inv}}{C_{\rm stab}}{%
  \ifthenelse{\equal{#1}{reduction}}{C_{\rm red}}{%
   \ifthenelse{\equal{#1}{unibound}}{C_{\rm hot}}{%
    \ifthenelse{\equal{#1}{elliptic}}{C_{\rm ell}}{%
   \ifthenelse{\equal{#1}{garding}}{C_{\mbox{\rm\scriptsize g\r{a}rd}}}{%
  \ifthenelse{\equal{#1}{refined}}{C_{\rm ref}}{%
  \ifthenelse{\equal{#1}{estred}}{C_{\rm est}}{%
  \ifthenelse{\equal{#1}{optimal}}{C_{\rm opt}}{%
  \ifthenelse{\equal{#1}{qo}}{C_{\rm qo}}{%
  \ifthenelse{\equal{#1}{rconv}}{C_{\rm conv}}{%
  \ifthenelse{\equal{#1}{cea}}{C_{\mbox{\rm\scriptsize C\'ea}}}{%
  \ifthenelse{\equal{#1}{nllip}}{C_{\mbox{\rm\scriptsize lip}}}{%
  \ifthenelse{\equal{#1}{nlelliptic}}{C_{\rm mon}}{%
  \ifthenelse{\equal{#1}{nlbound}}{C_{\ell oc}}{%
  \ifthenelse{\equal{#1}{almost}}{C}{%
  \ifthenelse{\equal{#1}{help}}{C}{%
  \ifthenelse{\equal{#2}{newcounter}}{\refstepcounter{constantsnumber}\label{const#1}}{}C_{\ref{const#1}}}%
}}}}}}}}}}}}}}}}}}}}}}}}}}}
\def\setc#1{\namec{#1}{newcounter}}
\def\c#1{\namec{#1}{reference}}
\def\definec#1{\refstepcounter{constantsnumber}\label{const#1}}%

\newcounter{contractionnumber}
\def\nameq#1#2{%
  \ifthenelse{\equal{#1}{estred}}{q_{\rm est}}{%
  \ifthenelse{\equal{#1}{estconv}}{q_{\rm est}}{%
  \ifthenelse{\equal{#1}{rconv}}{q_{\rm conv}}{%
  \ifthenelse{\equal{#1}{doerfler}}{q_{\rm D}}{%
  \ifthenelse{\equal{#2}{newcounter}}{\refstepcounter{contractionnumber}\label{contraction#1}}{}q_{\ref{contraction#1}}}%
}}}}
\def\setq#1{\nameq{#1}{newcounter}}
\def\q#1{\nameq{#1}{reference}}
\def\namerr#1#2{%
  \ifthenelse{\equal{#1}{elliptic}}{\rho_{\rm ell}}{%
 \ifthenelse{\equal{#1}{garding}}{\rho_{\mbox{\scriptsize \rm g\r{a}rd}}}{%
  \ifthenelse{\equal{#1}{estconv}}{\rho_{\rm est}}{%
  \ifthenelse{\equal{#1}{cea}}{\rho_{\mbox{\scriptsize C\'ea}}}{%
  \ifthenelse{\equal{#1}{qo}}{\rho_{\mbox{\scriptsize qo}}}{%
  \ifthenelse{\equal{#2}{newcounter}}{\refstepcounter{contractionnumber}\label{contraction#1}}{}\rho_{\ref{contraction#1}}}%
}}}}}
\def\setrr#1{\namerr{#1}{newcounter}}
\def\rr#1{\namerr{#1}{reference}}

\def\osc{{\rm osc}}

\newtheorem{theorem}{Theorem}
\newtheorem{proposition}[theorem]{Proposition}
\newtheorem{lemma}[theorem]{Lemma}

\newtheorem{algorithm}[theorem]{Algorithm}

\newenvironment{remark}{\medskip\noindent\textbf{Remark.}\ \it}{\qed\smallskip}
\newenvironment{example}{\medskip\noindent\textbf{Example.}\ \it}{\qed\smallskip}

\def\T{\mathbb T}

\def\subsection#1{\bigskip

\refstepcounter{subsection}{\bf\thesubsection.~#1.~~}}

\def\subsubsection#1{\bigskip

\refstepcounter{subsubsection}{\bf\thesubsubsection.{~#1.}~~}}

\begin{document}
\title[Adaptive FEM with optimal convergence rates]%
{Adaptive FEM with optimal convergence rates\\for a certain class 
of non-symmetric and possibly non-linear problems}
\date{\today}

\author{M.~Feischl}
\author{T.~F\"uhrer}
\author{D.~Praetorius}
\address{Institute for Analysis and Scientific Computing,
      Vienna University of Technology,
      Wiedner Hauptstra\ss{}e 8-10,
      A-1040 Wien, Austria}

\email{Michael.Feischl@tuwien.ac.at\quad\rm(corresponding author)}
\email{\{Thomas.Fuehrer,\,Dirk.Praetorius\}@tuwien.ac.at}

\keywords{adaptive algorithm;convergence;optimal cardinality;non-linear;non-symmetric}
\subjclass[2000]{65N30, 65N50, 65N15, 65N12, 41A25}

\begin{abstract}
We analyze adaptive mesh-refining algorithms for conforming finite element
discretizations of certain non-linear second-order partial differential equations. We allow continuous polynomials of arbitrary, but fixed polynomial order. The
adaptivity is driven by the residual error estimator. We prove convergence
even with optimal algebraic convergence rates. In particular, our analysis covers general linear second-order elliptic operators. Unlike prior works for linear non-symmetric operators, our
analysis avoids the interior node property for the refinement, and the 
differential operator has to satisfy a G\r{a}rding inequality only. If the
differential operator is uniformly elliptic, no additional assumption on the
initial mesh is posed.
\end{abstract}
\maketitle

\section{Introduction}

Let $\Omega$ be a bounded polyhedral Lipschitz domain in $\R^d$, $d\ge2$. 
We consider a homogeneous Dirichlet boundary value problem for a certain non-linear
second-order elliptic partial differential equation (PDE)
\begin{subequations}
\label{intro:modelproblem}
\begin{align}\label{intro:L}
\operator{L} u(x):= -\textrm{div}\big(\matrix{A}(x,\nabla u)\big) + g(x,u,\nabla u) &= f(x)
\quad\text{in }\Omega,\\
\label{intro:boundary}
u&=0
\quad\quad\;\,\text{on }\Gamma:=\partial\Omega.
\end{align}
\end{subequations}
The differential operator $\operator{L} = \operator{A} + \operator{K}$ is 
split into a principal part 
$\operator{A}u = -\textrm{div}\big(\matrix{A}(\cdot,\nabla u)\big)$ and a compact perturbation 
$\operator{K}u = g(\cdot,u,\nabla u)$, see Subsection~\ref{section:nonlin} for the precise regularity assumptions.
This framework also includes the case of general linear second-order elliptic operators
\begin{align}\label{intro:linearL}
\operator{L} u:= -\textrm{div}(\matrix{A}\nabla u) + \vector{b}\cdot\nabla u + cu. 
\end{align}
We consider a common
adaptive mesh-refining algorithm which iterates the following loop
\begin{align}\label{intro:afem}
 \boxed{\texttt{ solve }}
 \quad\longrightarrow\quad
 \boxed{\texttt{ estimate }}
 \quad\longrightarrow\quad
 \boxed{\texttt{ mark }}
 \quad\longrightarrow\quad
 \boxed{\texttt{ refine }}
\end{align}
The module \texttt{solve} computes a piecewise polynomial finite element
approximation $U_\ell$ of $u$ with respect to a given mesh $\TT_\ell$.
For \texttt{estimate}, we use a residual error estimator, see 
e.g.~\cite{ao00,v96}. Next, the D\"orfler marking criterion~\cite{doerfler} 
is used to single out elements for refinement. Finally, \texttt{refine} 
leads to a locally refined and improved mesh $\TT_{\ell+1}$ by means of
the newest vertex bisection algorithm (NVB).

So far, available results on
convergence and quasi-optimality of adaptive finite element methods (AFEM) 
from the literature essentially dealt with the linear, symmetric, and elliptic case~\eqref{intro:linearL} with 
$\vector{b}=0$ and $c\ge0$, see 
e.g.~\cite{bdd,bn,ckns,doerfler,ks,stevenson07}
and the references therein. As far as the linear and non-symmetric 
case $\vector{b}\neq0$ is concerned, we are only aware of the works~\cite{cn,mn}
which, however, considered the 
special situation $\textrm{div}\,\vector{b}=0$ and $c\ge0$. Moreover, their 
analysis requires the interior node property for the refinement at least after 
a fixed number of steps, which has been introduced in~\cite{mns} to guarantee 
a discrete lower bound for the error. Finally, the proofs of convergence 
and quasi-optimality in~\cite{cn,mn} assume the initial mesh $\TT_0$ to be
sufficiently fine although the assumption $\textrm{div}\,\vector{b}=0$ already
ensures ellipticity of the associated bilinear form $b(\cdot,\cdot)$ in the 
weak formulation of~\eqref{intro:modelproblem}, i.e.\ the operator $\operator{L}$ in~\eqref{intro:linearL} 
is uniformly elliptic. 
All this is different to the present work, and the advances over the state 
of the art, see e.g.~\cite{ckns,cn,ks}, are fourfold:
\begin{itemize}
\item[(i)] In the linear case~\eqref{intro:linearL}, our assumptions on the data $\matrix{A} = \matrix{A}(x)$,
$\vector{b}=\vector{b}(x)$, and $c=c(x)$ only ensure that the bilinear form
$b(\cdot,\cdot)$ of the weak formulation of~\eqref{intro:modelproblem} is
continuous and satisfies a G\r{a}rding inequality on $H^1_0(\Omega)$.
\item[(ii)] As for the symmetric case~\cite{ckns}, we only rely on standard 
newest vertex bisection, and the interior node property is avoided.
\item[(iii)] If $b(\cdot,\cdot)$ is elliptic, we avoid any assumption on the
initial mesh $\TT_0$. If $b(\cdot,\cdot)$ satisfies a G\r{a}rding inequality, we
require the same assumption on the initial mesh as~\cite{cn,mn} to ensure
well-posedness of the finite element formulations.
\item[(iv)] To the best of the authors' knowledge and besides~\cite{bdk} for the particular $p$-Laplace problem, this work provides the first quasi-optimality result for a class of non-linear problems. 
\end{itemize}
From a technical point of view, our analytical argument works as follows and is illustrated for the linear operator $\operator{L}$ from~\eqref{intro:linearL} with induced bilinear form $b(\cdot,\cdot)$:
First, the estimator reduction
\begin{align}\label{intro:estconv}
 \eta_{\ell+1}^2 \le q\,\eta_\ell^2 + C\,\enorm{U_{\ell+1}-U_\ell}^2
\end{align}
together with a C\'ea-type quasi-optimality already implies convergence 
$U_\ell \to u$ as $\ell\to\infty$ (Proposition~\ref{prop:conv}),
see also~\cite{estconv} for this \emph{estimator reduction principle}. 
Here, $0<q<1$ and $C>0$ are generic constants, and 
$\enorm\cdot$ denotes the energy quasi-norm induced by $b(\cdot,\cdot)$.
Second, the novel contribution in our analysis is that this additional 
knowledge allows us to prove a quasi-Pythagoras theorem
\begin{align}\label{intro:orthogonality}
 \enorm{U_{\ell+1}-U_\ell}^2
 + \enorm{u-U_{\ell+1}}^2
 \leq \frac{1}{1-\eps}\,\enorm{u-U_\ell}^2
\end{align}
for all $\eps>0$ and $\ell\ge\ell_0(\eps)$ sufficiently large
(Proposition~\ref{prop:quasiqo})
which unlike~\cite{cn,mn} avoids any additional assumption on the mesh-size
of $\TT_\ell$. With estimator reduction~\eqref{intro:estconv} 
and quasi-orthogonality~\eqref{intro:orthogonality} 
at hand, we next observe $R$-linear convergence 
\begin{align}\label{intro:linear}
 \eta_{\ell+k} \le Cq^k\eta_\ell
 \quad\text{for all }\ell,k\in\N
\end{align}
of the error estimator (Theorem~\ref{thm:rconv}) with further generic constants
$C>0$ and $0<q<1$. 
Finally, the $R$-linear convergence~\eqref{intro:linear} suffices to follow 
the paths of~\cite{stevenson07,ckns}
to prove even quasi-optimal convergence rates in the sense of
\begin{align}\label{intro:optimal}
 (u,f) \in \A_s
 \quad\Longleftrightarrow\quad
 \eta_\ell \le C\,(\#\TT_\ell-\#\TT_0)^{-s}
 \quad\text{for all }\ell\in\N,
\end{align}
i.e.\ each theoretically possible convergence order $\OO(N^{-s})$ for the 
error estimator will asymptotically be achieved by AFEM. The approximation
class $\A_s$ involved in~\eqref{intro:optimal} is defined in Section~\ref{section:optimality}.
By means of reliability and efficiency of the error estimator $\eta_\ell$ used, 
this quasi-optimality result can equivalently be stated in terms of error plus
oscillations as is done in~\cite{ckns,cn,ks,stevenson07}. As has first been
observed in~\cite{dirichlet3d}, our approach and proof of~\eqref{intro:optimal}, however, fully 
avoids the use of lower bounds for the error, i.e.\ all constants are 
independent of the efficiency estimate.

For the nonlinear problem~\eqref{intro:modelproblem}, we observe that estimator reduction~\eqref{intro:estconv}, $R$-linear convergence~\eqref{intro:linear}, as well as quasi-optimality~\eqref{intro:optimal} do not hinge on linearity of $\operator{L}$. We thus bootstrap the arguments developed for the linear case to prove a quasi-Pythagoras theorem~\eqref{intro:orthogonality} for nonlinear $\operator{L}$ (Proposition~\ref{prop:nlquasiqo}), and may derive convergence of AFEM with quasi-optimal algebraic rates.

The remainder of this paper is organized as follows: 
For the sake of a clear presentation, we first consider the linear case~\eqref{intro:linearL} with elliptic bilinear form $b(\cdot,\cdot)$ corresponding to the weak formulation of~\eqref{intro:modelproblem}. This case already includes the main ideas of how
to cope with compact perturbations. 
In Section~\ref{section:modelproblem}, we explicitly state the assumptions on
the differential operator $\operator{L}$ from~\eqref{intro:linearL}, recall the 
continuous and discrete variational formulation of~\eqref{intro:modelproblem},
and give the necessary details on the four modules of~\eqref{intro:afem}.
Section~\ref{section:convergence} then provides the estimator 
reduction~\eqref{intro:estconv}, which follows as in~\cite{ckns}, and 
the quasi-Galerkin orthogonality~\eqref{intro:orthogonality} which relies 
on the convergence of AFEM and compactness arguments. 
The short Section~\ref{section:contraction} proves $R$-linear 
convergence~\eqref{intro:linear} of the error estimator by use 
of~\eqref{intro:estconv}--\eqref{intro:orthogonality}.
We stress that, so far, the analysis does neither hinge on the precise 
mesh-refinement used, nor on the adaptivity parameter chosen.
By use of intrinsic properties of NVB, we then prove quasi-optimal convergence
rates~\eqref{intro:optimal} in Section~\ref{section:optimality}.
A final Section~\ref{section:extensions} is concerned with extensions of
our analysis. Amongst other topics, we discuss other boundary conditions 
than~\eqref{intro:boundary} as well as changes of our analysis if the bilinear form $b(\cdot,\cdot)$
satisfies only a G\r{a}rding inequality. Subsection~\ref{section:nonlin} bootstraps the arguments of the previous sections and incorporates the non-linear case~\eqref{intro:L} into the analysis.

In all statements, the constants involved and their dependencies are explicitly stated. In proofs, however, we use the symbol $\lesssim$ to abbreviate $\leq$ up to a multiplicative constant. Moreover, $\simeq$ abbreviates that both estimates $\lesssim$ and $\gtrsim$ hold.

\section{Model Problem \& Adaptive Algorithm}
\label{section:modelproblem}
This section is devoted to state the model problem~\eqref{intro:modelproblem} with linear differential operator~\eqref{intro:linearL} in weak form and to collect all the ingredients needed to formulate the adaptive algorithm. The presented problem is not the most general case on which the developed theory can be applied, but it allows for a rather simple presentation and illustrates the main difficulties of the problem. We refer to Section~\ref{section:extensions} for possible extensions and generalizations.
\subsection{Variational formulation}
For a given right-hand side $f\in L^2(\Omega)$, we consider  the elliptic boundary value problem~\eqref{intro:modelproblem} with linear operator $\operator{L}$ from~\eqref{intro:linearL}.
For the weak formulation, the error estimator, and to prove optimal convergence rates, we require some regularity assumptions on the coefficients. We assume that
$\matrix{A}=\matrix{A}(x) \in\R^{d\times d}$ with $\matrix{A}\in \big(W_1^\infty(\Omega)\big)^{d\times d}$ is a symmetric matrix, $\vector{b}=\vector{b}(x) \in \R^d$ with $\vector{b}\in \big(L^\infty(\Omega)\big)^d$ is a vector, and $c=c(x)\in\R$ with $c\in L^\infty(\Omega)$ is a scalar. Here, $W_1^\infty(\Omega):=\set{a \in L^\infty(\Omega)}{\nabla a \in \big(L^\infty(\Omega)\big)^d\text{ in the weak sense}}$ coincides with the space of Lipschitz continuous functions.
This allows to write down the weak formulation of~\eqref{intro:modelproblem}: Find $u\in H^1_0(\Omega):=\set{v\in H^1(\Omega)}{v|_{\Gamma}=0\text{ in the sense of traces}}$ such that 
\begin{align}\label{eq:weakform}
b(u,v):= \int_\Omega \matrix{A}\nabla u\cdot\nabla v + \vector{b}\cdot\nabla u\,v + cu v\,dx = \int_\Omega fv\,dx\quad\text{for all }v\in H^1_0(\Omega).
\end{align}
According to Sobolev's embedding theorem, there holds $H^1_0(\Omega)\subset L^{2d/(d-2)}(\Omega)$. The bilinear form $b(\cdot,\cdot)$ is therefore well-defined and bounded
with
\begin{align}\label{eq:continuous}
|b(u,v)|\leq \c{continuous}\normLtwo{\nabla u}{\Omega}\normLtwo{\nabla v}{\Omega}\quad\text{for all }u,v\in H^1_0(\Omega),
\end{align}
where the constant $\setc{continuous}:=C_{\Omega}\big(\norm{\matrix{A}}{L^\infty(\Omega)} + \norm{\vector{b}}{L^{d/(d+2)}(\Omega)} + \norm{c}{L^{d/2}(\Omega)}\big)$ depends only on the coefficients of $\operator{L}$ as well as the Poincar\'e constant $C_\Omega>0$ of $\Omega$.
Additionally, we assume that the coefficients ensure that $b(\cdot,\cdot)$ is elliptic, i.e.
\begin{align}\label{eq:elliptic}
 b(u,u)\geq \c{elliptic} \normLtwo{\nabla u}{\Omega}^2\quad\text{for all }u\in H^1_0(\Omega)
\end{align}
for some constant $\setc{elliptic}>0$ which may also depend on $C_\Omega>0$, see Section~\ref{section:extensions} if $b(\cdot,\cdot)$ satisfies only a G\r{a}rding inequality.

Now, the Lax-Milgram lemma guarantees unique solvability of~\eqref{eq:continuous} for all $f\in L^2(\Omega)$ and proves continuous dependence $\normLtwo{\nabla u}{\Omega}\lesssim \norm{f}{ H^{-1}(\Omega)}\leq \normLtwo{f}{\Omega}$. Here, $H^{-1}(\Omega):= H^1_0(\Omega)^\star$ denotes the dual space of $H^1_0(\Omega)$, and duality is understood with respect to the extended $L^2$-scalar product, i.e. 
\begin{align*}
\normHme{f}{\Omega} := \sup_{v\in H^1_0(\Omega)\setminus\{0\}} \frac{\int_\Omega fv\,dx}{\normLtwo{\nabla v}{\Omega}}.
\end{align*}
Moreover, the bilinear form $b(\cdot,\cdot)$ defines a \textit{quasi}-norm $\enorm{\cdot}:=b(\cdot,\cdot)^{1/2}$, i.e.\ $\enorm{\cdot}$ is definite and homogeneous, but satisfies the triangle inequality only up to some multiplicative constant. Due to ellipticity and continuity of $b(\cdot,\cdot)$, it holds
\begin{align}\label{eq:normequiv}
\c{norm}^{-1}\normLtwo{\nabla v}{\Omega}\leq\enorm{v}\leq \c{norm} \normLtwo{\nabla v}{\Omega}\quad\text{for all } v\in H^1_0(\Omega)
\end{align}
for a constant $\setc{norm}=\max\{\c{continuous}^{1/2},\c{elliptic}^{-1/2}\}>0$.

\subsection{Discrete formulation}
For any regular triangulation $\TT_\ell$ of $\Omega$ (see Section~\ref{sec:mesh} below) and $p\geq 1$, we consider the piecewise polynomials
\begin{align*}
 \PP^p(\TT_\ell):=\set{V_\ell\in L^2(\Omega)}{\text{for all }T\in\TT_\ell,\,V_\ell|_T\text{ is a polynomial of degree at most }p}
\end{align*}
as well as the conforming ansatz and test-space
\begin{align*}
\SS^p_0(\TT_\ell):=\PP^p(\TT_\ell)\cap H^1_0(\Omega)\subset \mathcal{C}(\overline{\Omega}).
\end{align*}
Now, the discrete formulation of~\eqref{eq:continuous} reads: Find $U_\ell\in\SS^p_0(\TT_\ell)$ such that
\begin{align}\label{eq:discrete}
b(U_\ell,V_\ell)=\int_\Omega f\, V_\ell\, dx \quad\text{for all }V_\ell\in\SS^p_0(\TT_\ell).
\end{align}
As in the continuous case~\eqref{eq:continuous}, existence and uniqueness of $U_\ell$ follows from the Lax-Milgram lemma. Moreover, there holds the C\'ea lemma
\begin{align}\label{eq:cea}
 \normLtwo{\nabla(u-U_\ell)}{\Omega}\leq\frac{\c{continuous}}{\c{elliptic}}\,\min_{V_\ell\in\SS^p_0(\TT_\ell)} \normLtwo{\nabla(u-V_\ell)}{\Omega}.
\end{align}

\subsection{Error estimator}
We use the standard weighted-residual error estimator with the local contributions
\begin{align*}
\eta_\ell(T)^2:= |T|^{2/d}\normLtwo{\operator{L}|_TU_\ell-f}{T}^2 + |T|^{1/d}\normLtwo{[\matrix{A}\nabla U_\ell\cdot n]}{\partial T\cap\Omega}^2\quad\text{for all }T\in\TT_\ell,\,\ell\in\N.
\end{align*}
Here, $|T|$ is the $d$-dimensional volume of $T\in\TT_\ell$, and $[\matrix{A}\nabla U_\ell\cdot n]|_E:= \big(\matrix{A}\nabla U_\ell|_{T_1}\big)\cdot n_{T_1} + \big(\matrix{A}\nabla U_\ell|_{T_2}\big)\cdot n_{T_2}$ denotes the conormal jump over the facet $E:=T_1\cap T_2 $ for all $T_1,T_2\in\TT_\ell$, where $n_{T_1},\,n_{T_2}$ denote the outward pointing normal units on the respective element boundaries. Note that due to the regularity assumptions on the coefficients, there holds $\operator{L}|_TU_\ell \in L^2(T)$ for all $T\in\TT_\ell$. The error estimator $\eta_\ell$ is defined as the $\ell_2$-sum of the elementwise contributions
\begin{align*}
\eta_\ell^2 := \sum_{T\in\TT_\ell}\eta_\ell(T)^2.
\end{align*}
As shown in e.g.~\cite{ao00,v96}, the error estimator is reliable, i.e. for all regular triangulations $\TT_\ell$ and corresponding solutions $U_\ell$ of~\eqref{eq:discrete}, it holds
\begin{align}\label{eq:reliable}
\normLtwo{\nabla(u-U_\ell)}{\Omega}\leq \c{reliable}\eta_\ell
\end{align}
for a constant $\setc{reliable}>0$.
Moreover, $\eta_\ell$ is also efficient, i.e.
\begin{subequations}\label{eq:efficient}
\begin{align}\label{eq:efficienta}
 \c{efficient}^{-1}\eta_\ell \leq \normLtwo{\nabla(u-U_\ell)}{\Omega} + \osc_\ell(U_\ell)
\end{align}
for a constant $\setc{efficient}>0$ and oscillation terms
\begin{align}\label{eq:efficientb}
\osc_\ell(U_\ell)^2:=\sum_{T\in\TT_\ell} |T|^{2/d}\normLtwo{(1-\Pi_\ell^{p-1})(\operator{L}|_TU_\ell-f)}{T}^2,
\end{align}
\end{subequations}%
where $\Pi_\ell^{p-1}:\,L^2(\Omega)\to\PP^{p-1}(\TT_\ell)$ denotes the $L^2$-orthogonal projection.
 The constants $\c{reliable},\c{efficient}>0$ depend only on $\gamma$-shape regularity of $\TT_\ell$ (see Section~\ref{sec:mesh} below), the polynomial degree $p\geq 1$, and on $\Omega$. We stress that unlike~\cite{ckns,cn,ks}, efficiency~\eqref{eq:efficient} is not used  throughout our analysis. 
\subsection{Adaptive algorithm}
Now, we are in the position to formulate the adaptive algorithm~\eqref{intro:afem} in detail.
\begin{algorithm}\label{algorithm}
\textsc{Input:} Initial triangulation $\TT_0$ and adaptivity parameter $0<\theta\leq 1$.\\
\textbf{Loop: }For $\ell=0,1,2,\ldots$ do ${\rm (i)}-{\rm(iv)}$
\begin{itemize}
\item[\rm(i)] Compute discrete solution $U_\ell$  of~\eqref{eq:discrete}.
\item[\rm(ii)] Compute refinement indicators $\eta_\ell(T)$ for all $T\in\TT_\ell$.
\item[\rm(iii)] Determine set $\MM_\ell\subseteq\TT_\ell$ of minimal cardinality such that
\begin{align}\label{eq:doerfler}
 \theta\,\eta_\ell(T)^2 \le \sum_{T\in\MM_\ell}\eta_\ell(T)^2.
\end{align}
\item[\rm(iv)] Refine (at least) the marked elements $T\in\MM_\ell$ to obtain the triangulation $\TT_{\ell+1}$.
\end{itemize}
\textsc{Output:} Approximate solutions $U_\ell$ and error estimators
$\eta_\ell$ for all $\ell\in\N$.
\end{algorithm}
\subsection{Mesh refinement}\label{sec:mesh}
Given an initial mesh $\TT_0$ which is regular in the sense of Ciarlet, we construct the subsequent meshes $\TT_\ell$ by local refinement with the newest vertex bisection for simplicial meshes in $\R^d$, $d\geq 2$, see e.g.~\cite[Chapter 4]{v96} resp.~\cite{stevenson08}.
Consequently, the set of meshes which can be obtained reads
\begin{align}\label{eq:triangulations}
 \T := \set{\TT_\ell}{\TT_\ell\text{ is a refinement of }\TT_0}.
\end{align}
The finite subset of meshes with at most $N\in\N$ elements more than the initial mesh is defined as
\begin{align*}
 \T_N := \set{\TT_\ell\in\T}{\#\TT_\ell-\#\TT_0\le N}.
\end{align*}
The meshes $\TT_\ell\in\T$ are regular in the sense of Ciarlet and $\gamma$-shape regular in the sense of
\begin{align}\label{refinement:shaperegular}
 \gamma^{-1}\, |T|^{1/d}&\leq \diam(T)\leq \gamma\,|T|^{1/d}
\end{align}
for some $\gamma\geq 1$ which depends only on $\TT_0$.
A refined element $T\in\TT_\ell$ is split into at least two sons, i.e.\ we have
\begin{align}\label{refinement:sons}
\#(\TT_\star\setminus\TT_\ell) \leq \#\TT_\star-\#\TT_\ell
\end{align}
for all refinements $\TT_\star\in\T$ of $\TT_\ell\in\T$. 
 As a key property for the optimality proof, the crucial closure estimate, for the meshes generated by Algorithm~\ref{algorithm}, is satisfied
\begin{align}\label{refinement:closure}
 \#\TT_\ell - \#\TT_0
 \le\c{mesh}\,\sum_{j=0}^{\ell-1}\#\MM_{j}\quad\text{for all }\ell\in\N
\end{align}
with some constant $\setc{mesh}>0$ which depends only on $\TT_0$. For $d\geq3$, $\TT_0$ has to satisfy a certain condition on the reference edges, cf.~\cite{bdd,stevenson08}, while this assumption can be dropped for $d=2$, see the recent work~\cite{kpp}.
Finally, for two meshes $\TT_\ell,\TT_\star\in\T$ there is a coarsest common refinement
$\TT_\ell\oplus\TT_\star\in\T$ which satisfies
\begin{align}\label{refinement:overlay}
 \#(\TT_\ell\oplus\TT_\star)\le \#\TT + \#\TT' - \#\TT_0,
\end{align}
see~\cite{ckns,stevenson07}. We stress that newest-vertex bisection is a binary refinement rule, and the coarsest common refinement $\TT_\ell\oplus\TT_\star$ is just the overlay of both meshes.

\section{Convergence \& Quasi-Orthogonality}
\label{section:convergence}
The aim of this section is to prove convergence, without relying on symmetry properties of $\operator{L}$, which can be done by use of the concept of estimator reduction~\cite{estconv}.
To that end, we define the subspace $\SS^p_0(\TT_\infty)$ of $H^1_0(\Omega)$ which is \textit{theoretically} affected by Algorithm~\ref{algorithm} as
\begin{align}\label{eq:xinfty}
\SS^p_0(\TT_\infty):= \overline{\bigcup_{\ell\in\N} \SS^p_0(\TT_\ell)},
\end{align} 
where the closure is taken with respect to the $H^1$-norm. With convergence $U_\ell\to u$ and hence $u\in\SS^p_0(\TT_\infty)$ at hand, we are then able to prove a novel quasi-Galerkin orthogonality estimate~\eqref{eq:quasiqo}, which is sufficient to prove linear convergence~\eqref{eq:rconv} as well as optimal convergence rates~\eqref{eq:optimality}.
\subsection{Convergence}
The following result is proved in~\cite{ckns} for symmetric $\operator{L}$ and shows that the error estimator $\eta_\ell$ is contractive up to a certain perturbation.
\begin{lemma}\label{lem:estred}
There exist constants $0<\setq{estred}<1$ and $\setc{estred}>0$, such that there holds
\begin{align}\label{eq:estred}
\eta_{\ell+1}^2 \leq \q{estred} \eta_\ell^2 + \c{estred}\normLtwo{\nabla(U_{\ell+1}-U_\ell)}{\Omega}^2\quad\text{for all }\ell\in\N.
\end{align}
The constants $\q{estred}$ and $\c{estred}$ depend only on $\theta$, $\gamma$-shape regularity of $\TT_{\ell+1}$, the polynomial degree $p\in\N$, and on $\Omega$.
\end{lemma}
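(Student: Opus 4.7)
The plan is to follow the now-standard estimator-reduction strategy of~\cite{ckns,estconv}, viewing the error indicator as a function of the discrete solution and using an inverse/trace argument to replace $U_{\ell+1}$ by $U_\ell$ in the indicator on the finer mesh, while simultaneously exploiting the mesh-size contraction on refined elements and the D\"orfler marking~\eqref{eq:doerfler}.

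To make this precise, I would introduce the auxiliary quantity
\begin{align*}
 \widetilde\eta_{\ell+1}(T,V)^2 := |T|^{2/d}\normLtwo{\operator{L}|_T V - f}{T}^2 + |T|^{1/d}\normLtwo{[\matrix{A}\nabla V\cdot n]}{\partial T\cap\Omega}^2
\end{align*}
for $T\in\TT_{\ell+1}$ and $V\in\SS^p_0(\TT_{\ell+1})$, so that $\eta_{\ell+1}(T) = \widetilde\eta_{\ell+1}(T,U_{\ell+1})$. The triangle inequality applied to the underlying $L^2$-norms, together with the Young inequality $(a+b)^2\le(1+\delta)a^2+(1+\delta^{-1})b^2$ for arbitrary $\delta>0$, yields
\begin{align*}
 \eta_{\ell+1}(T)^2 \le (1+\delta)\,\widetilde\eta_{\ell+1}(T,U_\ell)^2 + (1+\delta^{-1})\,R_{\ell+1}(T)^2,
\end{align*}
where $R_{\ell+1}(T)^2$ collects the volume and jump residuals of $U_{\ell+1}-U_\ell\in\SS^p_0(\TT_{\ell+1})$ on $T$. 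Standard inverse estimates on $T$ (for the second-order volume term with Lipschitz coefficient $\matrix{A}$) and the discrete trace inequality on $\partial T$, combined with $\gamma$-shape regularity, give
\begin{align*}
 \sum_{T\in\TT_{\ell+1}} R_{\ell+1}(T)^2 \le C\,\normLtwo{\nabla(U_{\ell+1}-U_\ell)}{\Omega}^2,
\end{align*}
where $C$ depends on $\matrix{A},\vector{b},c$, on $p$, and on $\gamma$.

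The core of the proof is the estimate of $\sum_{T\in\TT_{\ell+1}}\widetilde\eta_{\ell+1}(T,U_\ell)^2$ by a contraction of $\eta_\ell^2$. Splitting $\TT_{\ell+1}=(\TT_\ell\cap\TT_{\ell+1})\cup(\TT_{\ell+1}\setminus\TT_\ell)$, I would observe:

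\textbf{(a)} For $T\in\TT_\ell\cap\TT_{\ell+1}$, even though neighbouring facets may have been bisected, the conormal jump of $\matrix{A}\nabla U_\ell$ is unchanged when a face is split (since $U_\ell$ is piecewise polynomial on the unrefined $\TT_\ell$-neighbours), so $\widetilde\eta_{\ell+1}(T,U_\ell)^2 = \eta_\ell(T)^2$.

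\textbf{(b)} For each refined $T'\in\TT_\ell\setminus\TT_{\ell+1}$, its children $T\in\TT_{\ell+1}$ satisfy $|T|\le|T'|/2$; the volume residual sums as $\sum_T\normLtwo{\operator{L}U_\ell-f}{T}^2=\normLtwo{\operator{L}U_\ell-f}{T'}^2$, whereas all new interior facets inside $T'$ contribute zero to the jump of $\matrix{A}\nabla U_\ell$, since $U_\ell|_{T'}$ is a polynomial. Hence
\begin{align*}
 \sum_{T\in\TT_{\ell+1},\,T\subsetneq T'} \widetilde\eta_{\ell+1}(T,U_\ell)^2 \le 2^{-1/d}\,\eta_\ell(T')^2.
\end{align*}
Since the set of refined elements contains the marked set $\MM_\ell$, the D\"orfler property~\eqref{eq:doerfler} gives
\begin{align*}
 \sum_{T\in\TT_{\ell+1}} \widetilde\eta_{\ell+1}(T,U_\ell)^2 \le \eta_\ell^2 - (1-2^{-1/d})\sum_{T\in\MM_\ell}\eta_\ell(T)^2 \le \kappa\,\eta_\ell^2
\end{align*}
with $\kappa := 1-(1-2^{-1/d})\theta \in (0,1)$.

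Combining the two displays and choosing $\delta>0$ so small that $\q{estred}:=(1+\delta)\kappa<1$ concludes the proof, with $\c{estred}=(1+\delta^{-1})C$. The main technical obstacle I expect is the bookkeeping in step~(b) — in particular the observation that jumps across newly created interior facets vanish and that edges on $\partial T'$ that are subdivided by NVB still sum up to the full jump on $\partial T'$ — together with the verification of the inverse estimates for the perturbation $R_{\ell+1}(T)$ in the presence of non-constant coefficients $\matrix{A}$, $\vector{b}$, $c$; this is precisely where the regularity assumption $\matrix{A}\in (W_1^\infty)^{d\times d}$ and $\vector{b},c\in L^\infty$ enters.
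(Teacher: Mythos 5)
Your proposal is correct and follows essentially the same route as the paper, which itself just sketches the argument of~\cite[Corollary~3.4]{ckns}: Young's inequality to replace $U_{\ell+1}$ by $U_\ell$ at the cost of a perturbation bounded via inverse/trace inequalities by $\normLtwo{\nabla(U_{\ell+1}-U_\ell)}{\Omega}^2$, followed by the split of $\TT_{\ell+1}$ into unrefined and refined elements, the size reduction $|T|\le|T'|/2$ on children, and D\"orfler marking to obtain the contraction factor $1-\theta(1-2^{-1/d})$. Your steps (a) and (b) — that jump terms are unchanged on unrefined elements and that jumps across newly created interior facets vanish since $U_\ell$ is a single polynomial there — make explicit the bookkeeping the paper leaves to the cited reference, and your invocation of the Poincar\'e inequality (needed for the lower-order terms $\vector{b}\cdot\nabla(\cdot)$ and $c\,(\cdot)$ in the residual) is implicit in the paper's mention of it alongside the inverse estimate.
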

\begin{proof}
The proof follows verbatim the proof of~\cite[Corollary~3.4]{ckns}. Therefore, we give a rough sketch only.
The application of Young's inequality $2ab\leq a^2+b^2$ proves for $\delta>0$
\begin{align*}
\begin{split}
 \eta_{\ell+1}^2&\leq (1+\delta)\sum_{T^\prime\in\TT_{\ell+1}} \Big(|T^\prime|^{2/d}\normLtwo{\operator{L}|_{T^\prime}U_\ell -f}{T^\prime}^2+|T^\prime|^{1/d}\normLtwo{[\matrix{A}\nabla U_\ell\cdot n]}{\partial T^\prime\cap \Omega}^2\Big)\\
&\qquad+(1+\delta^{-1})\sum_{T^\prime\in\TT_{\ell+1}} \Big(|T^\prime|^{2/d}\normLtwo{\operator{L}|_{T^\prime}(U_{\ell+1}-U_\ell)}{T^\prime}^2\\
&\qquad\qquad+|T^\prime|^{1/d}\normLtwo{[\matrix{A}\nabla (U_{\ell+1}-U_\ell)\cdot n]}{\partial T^\prime\cap \Omega}^2\Big).
\end{split}
\end{align*}
By use of the regularity assumption on the coefficients and standard inverse estimates as well as the Poincar\'e inequality, we obtain
\begin{align}\label{eq:stable}
\begin{split}
 \eta_{\ell+1}^2&\leq(1+\delta)\sum_{T^\prime\in\TT_{\ell+1}} \Big(|T^\prime|^{2/d}\normLtwo{\operator{L}|_{T^\prime}U_\ell -f}{T^\prime}^2+|T^\prime|^{1/d}\normLtwo{[\matrix{A}\nabla U_\ell\cdot n]}{\partial T^\prime\cap \Omega}^2\Big)\\
&\leq(1+\delta)\sum_{T^\prime\in\TT_{\ell+1}} \Big(|T^\prime|^{2/d}\normLtwo{\operator{L}|_{T^\prime}U_\ell -f}{T^\prime}^2+|T^\prime|^{1/d}\normLtwo{[\matrix{A}\nabla U_\ell\cdot n]}{\partial T^\prime\cap \Omega}^2\Big)\\
&\qquad\qquad\qquad\qquad+(1+\delta^{-1})\c{inv}\normLtwo{\nabla(U_{\ell+1}-U_\ell)}{\Omega}^2.
\end{split}
\end{align}
The constant $\setc{inv}>0$ depends only on the $\gamma$-shape regularity of $\TT_{\ell+1}$, the norms 
$\norm{\matrix{A}}{W_1^\infty(\Omega)}^2, \norm{\vector{b}}{L^\infty(\Omega)}^2, \norm{c}{L^\infty(\Omega)}^2$, and on the polynomial degree $p\in\N$.
Next, the sum is split into two sums over $T^\prime \in \TT_\ell\cap\TT_{\ell+1}$ and $T^\prime \in\TT_{\ell+1}\setminus\TT_\ell$. We use the reduction of the element size $|T^\prime|\leq |T|/2$ for $T^\prime\subset T$ being a son of a refined element $T\in\TT_\ell\setminus\TT_{\ell+1}$. Since $\MM_\ell\subseteq \TT_\ell\setminus\TT_{\ell+1}$, one ends up with
\begin{align*}
 \eta_{\ell+1}^2&\leq (1+\delta)\Big(2^{-1/d}\hspace{-3mm}\sum_{T\in\TT_\ell\setminus\TT_{\ell+1}} \eta_\ell(T)^2+\hspace{-4mm}\sum_{T\in\TT_\ell\cap\TT_{\ell+1}}\eta_\ell(T)^2\Big)
+(1+\delta^{-1})\c{inv}\normLtwo{\nabla(U_{\ell+1}-U_\ell)}{\Omega}^2\\
&\leq(1+\delta)\Big(2^{-1/d}\sum_{T\in\MM_\ell} \eta_\ell(T)^2 +\sum_{T\in\TT_\ell\setminus\MM_\ell}\eta_\ell(T)^2\Big)
+(1+\delta^{-1})\c{inv}\normLtwo{\nabla(U_{\ell+1}-U_\ell)}{\Omega}^2\\
&\leq(1+\delta)\Big((2^{-1/d}-1)\sum_{T\in\MM_\ell} \eta_\ell(T)^2
+\eta_\ell^2\Big)+(1+\delta^{-1})\c{inv}\normLtwo{\nabla(U_{\ell+1}-U_\ell)}{\Omega}^2.
\end{align*}
Finally, D\"orfler marking~\eqref{eq:doerfler} proves~\eqref{eq:estred} with
\begin{align*}
 \q{estred}=\big(1-\theta(1-2^{-1/d}\big)(1+\delta)\in (0,1)\quad\text{and}\quad\c{estred}=(1+\delta^{-1})\c{inv}
\end{align*}
for $\delta>0$ sufficiently small.
\end{proof}
Adaptive algorithms of the type of Algorithm~\ref{algorithm} with nested ansatz spaces $\SS^p_0(\TT_\ell)\subseteq \SS^p_0(\TT_{\ell+1})$ have in common that there holds
\textit{a~priori} convergence. This has already been observed in the early work~\cite{bv} and has later also been used in~\cite{msv} to prove a general plain convergence result for AFEM.
\begin{lemma}\label{lem:apriori}
The sequence of Galerkin approximations  $U_\ell$ of Algorithm~\ref{algorithm} is convergent in $H^1_0(\Omega)$, i.e.\ there exists $u_\infty\in\SS^p_0(\TT_\infty)$ with
\begin{align}\label{eq:apriori}
U_\ell \to u_\infty \quad\text{as }\ell\to\infty.
\end{align}
\end{lemma}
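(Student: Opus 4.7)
The plan is to use the standard \emph{a priori} convergence argument for nested Galerkin schemes: construct a limit problem on $\SS^p_0(\TT_\infty)$, and show that $U_\ell$ is just the Galerkin projection onto $\SS^p_0(\TT_\ell)$ of the solution of this limit problem.

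First, I observe that $\SS^p_0(\TT_\infty)$ is by definition a closed subspace of $H^1_0(\Omega)$, and the bilinear form $b(\cdot,\cdot)$ inherits continuity~\eqref{eq:continuous} and ellipticity~\eqref{eq:elliptic} on it. The Lax--Milgram lemma therefore produces a unique $u_\infty \in \SS^p_0(\TT_\infty)$ satisfying
\begin{align*}
b(u_\infty, v_\infty) = \int_\Omega f v_\infty \, dx \quad\text{for all } v_\infty \in \SS^p_0(\TT_\infty).
\end{align*}
Since $\SS^p_0(\TT_\ell) \subseteq \SS^p_0(\TT_\infty)$, comparing with the discrete formulation~\eqref{eq:discrete} yields the Galerkin orthogonality $b(u_\infty - U_\ell, V_\ell) = 0$ for all $V_\ell \in \SS^p_0(\TT_\ell)$, and thus the same C\'ea estimate as in~\eqref{eq:cea} gives
\begin{align*}
\normLtwo{\nabla(u_\infty - U_\ell)}{\Omega} \le \frac{\c{continuous}}{\c{elliptic}} \min_{V_\ell \in \SS^p_0(\TT_\ell)} \normLtwo{\nabla(u_\infty - V_\ell)}{\Omega}.
\end{align*}

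Next, I would show that the right-hand side tends to zero. By definition~\eqref{eq:xinfty}, there is a sequence $W_k \in \bigcup_\ell \SS^p_0(\TT_\ell)$ with $W_k \to u_\infty$ in $H^1_0(\Omega)$. For each $k$ pick $\ell_k$ with $W_k \in \SS^p_0(\TT_{\ell_k})$; by nestedness $W_k \in \SS^p_0(\TT_\ell)$ for every $\ell \ge \ell_k$. Hence the best-approximation error in $\SS^p_0(\TT_\ell)$ is non-increasing in $\ell$ and bounded above by $\normLtwo{\nabla(u_\infty - W_k)}{\Omega}$, which can be made arbitrarily small. Combined with the C\'ea estimate above this proves $U_\ell \to u_\infty$ in $H^1_0(\Omega)$, and $u_\infty \in \SS^p_0(\TT_\infty)$ by construction.

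The argument is essentially soft functional analysis and has no serious obstacle; the only subtle point is making sure the right variational problem is solved on the limit space. Note that I do \emph{not} claim $u_\infty = u$ at this stage. The point of the lemma is purely that the sequence $(U_\ell)$ has a limit in $H^1_0(\Omega)$; identifying this limit with the exact solution $u$ will follow later, as a consequence of reliability~\eqref{eq:reliable} together with the estimator reduction from Lemma~\ref{lem:estred}, which will be exploited in the subsequent convergence result.
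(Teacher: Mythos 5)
Your proof is correct and takes the same route as the paper: Lax--Milgram on the closed limit space $\SS^p_0(\TT_\infty)$, observing that each $U_\ell$ is the Galerkin projection of $u_\infty$ onto $\SS^p_0(\TT_\ell)$, and then C\'ea plus density of $\bigcup_\ell\SS^p_0(\TT_\ell)$ in $\SS^p_0(\TT_\infty)$. You merely spell out the density step that the paper leaves implicit, and you are right to emphasize that $u_\infty=u$ is not claimed here.
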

\begin{proof}
The space $\SS^p_0(\TT_\infty)$ is a closed subspace of $H^1_0(\Omega)$ and therefore the Lax-Milgram lemma guarantees existence and uniqueness of a solution $u_\infty\in\SS^p_0(\TT_\infty)$ of~\eqref{eq:discrete} with test space $\SS^p_0(\TT_\infty)$ instead of $\SS^p_0(\TT_\ell)$. The Galerkin approximations $U_\ell$ are also Galerkin approximations of $u_\infty$, since $\SS^p_0(\TT_\ell)\subseteq \SS^p_0(\TT_\infty)$ for all $\ell\in\N$. Therefore, the C\'ea lemma shows
\begin{align*}
\normLtwo{\nabla(u_\infty-U_\ell)}{\Omega}\lesssim \min_{V_\ell\in\SS^p_0(\TT_\ell)} \normLtwo{\nabla (u_\infty-V_\ell)}{\Omega}\to 0
\end{align*}
as $\ell \to \infty$. 
\end{proof}
The combination of estimator reduction~\eqref{eq:estred} and a~priori convergence~\eqref{eq:apriori} yields convergence of Algorithm~\ref{algorithm}.

\begin{proposition}\label{prop:conv}
Algorithm~\ref{algorithm} is convergent in $H^1_0(\Omega)$, i.e.
\begin{align}\label{eq:conv}
U_\ell \to u \in H^1_0(\Omega)\quad\text{as }\ell\to\infty.
\end{align}
In particular, this implies $u=u_\infty\in\SS^p_0(\TT_\infty)$.
\end{proposition}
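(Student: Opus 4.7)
The plan is to combine the a~priori convergence from Lemma~\ref{lem:apriori}, the estimator reduction from Lemma~\ref{lem:estred}, and the reliability estimate~\eqref{eq:reliable}. Since Lemma~\ref{lem:apriori} already yields a limit $u_\infty\in\SS^p_0(\TT_\infty)$ with $U_\ell\to u_\infty$ in $H^1_0(\Omega)$, the task reduces to showing that this limit coincides with the exact solution $u$ of~\eqref{eq:weakform}.

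First, I would extract from $U_\ell\to u_\infty$ in $H^1_0(\Omega)$ the Cauchy-type consequence
\begin{align*}
\alpha_\ell := \normLtwo{\nabla(U_{\ell+1}-U_\ell)}{\Omega} \longrightarrow 0\quad\text{as }\ell\to\infty,
\end{align*}
which plays the role of a vanishing perturbation in the estimator reduction~\eqref{eq:estred}. Second, I would feed this into Lemma~\ref{lem:estred} to conclude $\eta_\ell\to 0$. The main (though standard) technical step is precisely this \emph{perturbed contraction} argument: given a recursion of the form $\eta_{\ell+1}^2\le \q{estred}\,\eta_\ell^2+\c{estred}\,\alpha_\ell^2$ with $0<\q{estred}<1$ and $\alpha_\ell\to 0$, for arbitrary $\eps>0$ choose $\ell_0$ with $\c{estred}\,\alpha_\ell^2<\eps$ for all $\ell\ge\ell_0$, and iterate to obtain
\begin{align*}
\eta_{\ell_0+k}^2 \le \q{estred}^k\eta_{\ell_0}^2 + \eps\sum_{j=0}^{k-1}\q{estred}^j \le \q{estred}^k\eta_{\ell_0}^2 + \frac{\eps}{1-\q{estred}}.
\end{align*}
Passing to the limit $k\to\infty$ gives $\limsup_\ell \eta_\ell^2\le \eps/(1-\q{estred})$, and since $\eps>0$ was arbitrary, $\eta_\ell\to 0$.

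Third, reliability~\eqref{eq:reliable} then yields
\begin{align*}
\normLtwo{\nabla(u-U_\ell)}{\Omega} \le \c{reliable}\,\eta_\ell \longrightarrow 0,
\end{align*}
so $U_\ell\to u$ in $H^1_0(\Omega)$. Uniqueness of limits in $H^1_0(\Omega)$ forces $u=u_\infty\in\SS^p_0(\TT_\infty)$, which is the claimed identification. I do not anticipate any serious obstacle here: the essential ideas (a~priori convergence along nested spaces, estimator reduction, reliability) have all been prepared in the preceding lemmas, and what remains is the short perturbed-contraction bookkeeping sketched above.
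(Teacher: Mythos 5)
Your proof follows exactly the same route as the paper: a~priori convergence (Lemma~\ref{lem:apriori}) makes the perturbation $\normLtwo{\nabla(U_{\ell+1}-U_\ell)}{\Omega}$ vanish, the estimator reduction~\eqref{eq:estred} then forces $\eta_\ell\to 0$ via the standard perturbed-contraction argument, and reliability~\eqref{eq:reliable} concludes. The paper cites~\cite{estconv} for the perturbed-contraction step that you spell out explicitly; otherwise the two proofs are identical.
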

\begin{proof}
According to Lemma~\ref{lem:apriori}, the estimator reduction~\eqref{eq:estred} of Lemma~\ref{lem:estred} takes the form
\begin{align*}
 \eta_{\ell+1}^2\leq \q{estred}\eta_\ell^2 + \alpha_\ell
\end{align*}
with $\alpha_\ell\geq 0$ and $\lim_{\ell\to\infty}\alpha_\ell=0$. From this, elementary calculus proves $\lim_{\ell\to\infty} \eta_\ell=0$, see e.g.~\cite{estconv}. Finally, reliability~\eqref{eq:reliable} of $\eta_\ell$ concludes the proof.
\end{proof}
\subsection{Quasi-Galerkin orthogonality}\label{section:quasiqo}
The standard proof of the Pythagoras theorem $\enorm{u-U_{\ell+1}}^2+\enorm{U_{\ell+1}-U_\ell}^2=\enorm{u-U_\ell}^2$ relies on Galerkin orthogonality and symmetry of $b(\cdot,\cdot)$. The following lemmata provide a workaround for our case of a non-symmetric bilinear form $b(\cdot,\cdot)$. We stress that the quasi-orthogonality proof makes explicit use of the fact that we already have convergence $U_\ell\to u$ in $H^1_0(\Omega)$ and $u\in \SS^p_0(\TT_\infty)$.
\begin{lemma}\label{lem:compact}
 The operators $\operator{A}, \operator{K}:\, H^1_0(\Omega) \to H^{-1}(\Omega)$ are bounded. Moreover, $\operator{A}$ is symmetric and $\operator{K}$ is compact.
 \end{lemma}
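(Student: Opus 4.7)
The plan is to handle the four claims separately, and in each case to reduce everything to standard functional-analytic facts (Cauchy--Schwarz, H\"older, Rellich--Kondrachov, Schauder's theorem on adjoints of compact operators).

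For the \emph{boundedness of $\operator{A}$}, I would simply observe that the dual pairing is
\begin{align*}
\dual{\operator{A}u}{v} = \int_\Omega \matrix{A}\nabla u\cdot\nabla v\,dx
\end{align*}
for $u,v\in H^1_0(\Omega)$, so Cauchy--Schwarz and the $L^\infty$-bound on $\matrix{A}$ give $\norm{\operator{A}u}{H^{-1}(\Omega)}\le \norm{\matrix{A}}{L^\infty(\Omega)}\normLtwo{\nabla u}{\Omega}$. The \emph{symmetry of $\operator{A}$} is then immediate from the pointwise symmetry of the matrix $\matrix{A}(x)$, since $\matrix{A}\nabla u\cdot\nabla v=\nabla u\cdot\matrix{A}\nabla v$ in the integrand.

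For the \emph{boundedness of $\operator{K}$}, I would use $\dual{\operator{K}u}{v}=\int_\Omega (\vector{b}\cdot\nabla u)v+cuv\,dx$ together with H\"older's inequality and the Poincar\'e inequality to estimate $\norm{\operator{K}u}{H^{-1}(\Omega)}\lesssim(\norm{\vector{b}}{L^\infty(\Omega)}+\norm{c}{L^\infty(\Omega)})\normLtwo{\nabla u}{\Omega}$.

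The real content is the \emph{compactness of $\operator{K}$}, and the clean way to see it is to factor
\begin{align*}
\operator{K}\colon H^1_0(\Omega)\xrightarrow{\;T\;} L^2(\Omega)\xrightarrow{\;J\;} H^{-1}(\Omega),
\end{align*}
where $Tu:=\vector{b}\cdot\nabla u+cu$ and $J$ is the natural embedding induced by the extended $L^2$-scalar product. That $T$ is bounded from $H^1_0(\Omega)$ to $L^2(\Omega)$ is direct, because $\vector{b}\in L^\infty$ and $c\in L^\infty$ ensure $\normLtwo{Tu}{\Omega}\le \norm{\vector{b}}{L^\infty(\Omega)}\normLtwo{\nabla u}{\Omega}+\norm{c}{L^\infty(\Omega)}\normLtwo{u}{\Omega}$. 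The point I expect to require the most care to state cleanly is the compactness of $J\colon L^2(\Omega)\to H^{-1}(\Omega)$: this is nothing but the Hilbert-space adjoint of the compact Rellich--Kondrachov embedding $H^1_0(\Omega)\hookrightarrow L^2(\Omega)$, so Schauder's theorem (adjoint of a compact operator is compact) gives the compactness of $J$. Composing with the bounded $T$ then yields compactness of $\operator{K}$, which concludes the proof.
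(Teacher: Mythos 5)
Your proposal is correct and follows essentially the same route as the paper: boundedness of both operators by direct estimate, symmetry of $\operator{A}$ from pointwise symmetry of $\matrix{A}(x)$, and compactness of $\operator{K}$ by factoring it through $L^2(\Omega)$ and invoking Schauder's theorem to see that the adjoint of the compact Rellich embedding $H^1_0(\Omega)\hookrightarrow L^2(\Omega)$ is the compact natural map $L^2(\Omega)\to H^{-1}(\Omega)$.
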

\begin{proof}
 The symmetry of $\operator{A}$ is obvious, and both operators $\operator{A}$ and $\operator{K}$ are also bounded, i.e.
 \begin{align*}
 \normHme{\operator{A}v}{\Omega}&\leq \norm{\matrix{A}}{L^\infty(\Omega)}  \normLtwo{\nabla v}{\Omega},\\
  \normHme{\operator{K}v}{\Omega}&\leq \normLtwo{\operator{K}v}{\Omega}\leq  (\norm{\vector{b}}{L^{d/(d+2)}(\Omega)}+ \norm{c}{L^{d/2}(\Omega)}) \normLtwo{\nabla v}{\Omega},
 \end{align*}
for all $v\in H^1_0(\Omega)$.
It remains to prove that $\operator{K}$ is compact. 
The Rellich compactness theorem shows that the embedding $\iota:\,H^1_0(\Omega)\hookrightarrow L^2(\Omega)$ is a compact operator. Therefore, according to Schauder's theorem, see e.g.~\cite[Theorem~4.19]{rudin}, the adjoint operator $\iota^\star:\,L^2(\Omega)\to H^{-1}(\Omega)$ is also compact. Obviously, $\iota^\star:\,L^2(\Omega)\to H^{-1}(\Omega)$ coincides with the natural embedding, and we may write
\begin{align*}
\operator{K} = \iota^\star \circ \operator{K} :  H^1_0(\Omega)\to L^2(\Omega)\to H^{-1}(\Omega).
\end{align*}
Therefore, $\operator{K}$ is the composition of a bounded operator and a compact operator and hence compact. This concludes the proof.
\end{proof}

\begin{lemma}\label{lem:weakconv}
The sequences $(e_\ell)_{\ell\in\N}$ and $(E_\ell)_{\ell\in\N}$ defined by
\begin{align*}
e_\ell:=\begin{cases}\frac{u-U_\ell}{\normLtwo{\nabla(u-U_\ell)}{\Omega}},& \text{ for }u\neq U_\ell,\\
	0, &\text{ else,}\end{cases}\quad\text{and}\quad 
	E_\ell:=\begin{cases}\frac{U_{\ell+1}-U_\ell}{\normLtwo{\nabla(u-U_\ell)}{\Omega}},&\text{ for }U_{\ell+1}\neq U_\ell,\\
		0, &\text{ else,}\end{cases}
\end{align*}
converge to zero, weakly in $ H^1_0(\Omega)$.
\end{lemma}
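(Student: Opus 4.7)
The plan is to apply the subsequence principle in the reflexive Hilbert space $H^1_0(\Omega)$: every bounded sequence admits a weakly convergent subsequence, and if every weak accumulation point equals $0$, then the whole sequence converges weakly to $0$. The goal is therefore (a) to verify that $(e_\ell)$ and $(E_\ell)$ are bounded in $H^1_0(\Omega)$, and (b) to identify any weak subsequential limit as zero.

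Boundedness of $(e_\ell)$ is immediate from the definition, $\normLtwo{\nabla e_\ell}{\Omega}\leq 1$. For $(E_\ell)$, I would combine the triangle inequality with C\'ea's lemma~\eqref{eq:cea} applied to $U_{\ell+1}$ with the competitor $U_\ell\in\SS^p_0(\TT_{\ell+1})$ (available by nestedness), which gives $\normLtwo{\nabla(u-U_{\ell+1})}{\Omega}\leq \c{continuous}\c{elliptic}^{-1}\normLtwo{\nabla(u-U_\ell)}{\Omega}$ and hence $\normLtwo{\nabla E_\ell}{\Omega}\leq 1+\c{continuous}\c{elliptic}^{-1}$.

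Let $e^\star$ and $E^\star$ denote arbitrary weak $H^1_0$-limits of subsequences of $(e_\ell)$ and $(E_\ell)$. Since Proposition~\ref{prop:conv} yields $u\in\SS^p_0(\TT_\infty)$ and $U_\ell,U_{\ell+1}\in\SS^p_0(\TT_\infty)$ by nestedness, both sequences lie in the closed---and hence weakly closed---subspace $\SS^p_0(\TT_\infty)$, so $e^\star,E^\star\in\SS^p_0(\TT_\infty)$. The crucial input is Galerkin orthogonality: for any $V_m\in\SS^p_0(\TT_m)$ and any $\ell\geq m$, nestedness ensures $V_m\in\SS^p_0(\TT_\ell)\cap\SS^p_0(\TT_{\ell+1})$, so $b(u-U_\ell,V_m)=b(u-U_{\ell+1},V_m)=0$ and thus $b(e_\ell,V_m)=b(E_\ell,V_m)=0$ after normalization. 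By~\eqref{eq:continuous} the map $v\mapsto b(v,V_m)$ is a bounded linear form on $H^1_0(\Omega)$, hence weakly continuous, so passage to the weak limit along the chosen subsequence produces $b(e^\star,V_m)=b(E^\star,V_m)=0$. Density of $\bigcup_m\SS^p_0(\TT_m)$ in $\SS^p_0(\TT_\infty)$ together with continuity of $b$ in its second argument promotes this to $b(e^\star,V)=b(E^\star,V)=0$ for every $V\in\SS^p_0(\TT_\infty)$.

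Testing with the admissible choice $V=e^\star$ and invoking ellipticity~\eqref{eq:elliptic} gives $\c{elliptic}\normLtwo{\nabla e^\star}{\Omega}^2\leq b(e^\star,e^\star)=0$, so $e^\star=0$; the identical argument yields $E^\star=0$. The subsequence principle then concludes that the whole sequences $(e_\ell)$ and $(E_\ell)$ converge weakly to $0$ in $H^1_0(\Omega)$. The only moderately delicate point is the boundedness of $(E_\ell)$, where the C\'ea-type estimate (and therefore ellipticity) is essential to control $\normLtwo{\nabla(u-U_{\ell+1})}{\Omega}$ in terms of $\normLtwo{\nabla(u-U_\ell)}{\Omega}$; the remainder is a clean combination of Galerkin orthogonality, continuity, ellipticity, and weak compactness.
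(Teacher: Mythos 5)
Your proof is correct and takes essentially the same route as the paper: boundedness in $H^1_0(\Omega)$, extraction of weakly convergent subsequences, membership of the weak limit in $\SS^p_0(\TT_\infty)$, annihilation of the limit against every $V_m\in\SS^p_0(\TT_m)$ via Galerkin orthogonality and weak continuity of $b(\cdot,V_m)$, density, and definiteness/ellipticity of $b$ to conclude the limit is zero, followed by the subsequence principle. The only addition is your explicit verification of the boundedness of $(E_\ell)$ via the C\'ea lemma, which the paper leaves implicit in the remark that ``the weak convergence of $E_\ell$ follows with the same arguments.''
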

\begin{proof}
We prove weak convergence of $e_\ell$ to zero. The weak convergence of $E_\ell$ follows with the same arguments.
Let $(e_{\ell_j})$ be a subsequence of $(e_\ell)$. Due to boundedness $\normLtwo{\nabla e_{\ell_j}}{\Omega}\leq 1$ for all $j\in\N$, we may extract a weakly convergent subsequence $(e_{\ell_{j_k}})$ of $(e_{\ell_j})$ with
\begin{align*}
e_{\ell_{j_k}} \rightharpoonup w \in H^1_0(\Omega).
\end{align*}
First, note that $u,U_\ell\in\SS_0^p(\TT_\infty)$ implies $e_\ell\in \SS_0^p(\TT_\infty)$ and hence $w\in\SS_0^p(\TT_\infty)$. Second, for all $\ell_{j_k}\geq \ell$ with $e_{\ell_{j_k}}\neq 0$ and all $V_\ell\in\SS^p_0(\TT_\ell)$, it holds
\begin{align*}
b(e_{\ell_{j_k}},V_\ell) = \normLtwo{\nabla(u-U_{\ell_{j_k}})}{\Omega}^{-1} b(u-U_{\ell_{j_k}}, V_\ell) = 0.
\end{align*}
For any $\ell\in\N$, $V_\ell\in\SS^p_0(\TT_\ell)$, and $\eps>0$, there exists $k_0\in\N$ such that for all $k\geq k_0$, it holds
\begin{align*}
|b(w,V_\ell) |=| \dual{w}{\operator{L}^\star V_\ell} | \leq \eps + |\dual{e_{\ell_{j_k}}}{\operator{L}^\star V_\ell} |= \eps + |b(e_{\ell_{j_k}},V_\ell)| = \eps,
\end{align*} 
since $k_0$ is chosen large enough such that ${\ell_{j_k}}\geq \ell$. Therefore
\begin{align*}
b(w,V_\ell)=0 \quad\text{for all }V_\ell\in\SS^p_0(\TT_\ell) \text{ and } \ell\in\N.
\end{align*}
Due to definiteness of $b(\cdot,\cdot)$ and $w\in\SS^p_0(\TT_\infty):=\overline{\bigcup_{\ell\in\N}\SS^p_0(\TT_\ell)}$, this implies $w=0$. Altogether, we have now shown that each subsequence of $e_\ell$ has a subsequence which converges weakly to zero. This immediately implies weak convergence $e_\ell \rightharpoonup 0$ as $\ell\to\infty$.
\end{proof}
The previous lemma shows that although $(E_\ell)_{\ell\in\N}$ is no orthonormal sequence, it shares the property of weak convergence to zero with orthonormal systems. Note that our proof already used convergence $U_\ell\to u$ as $\ell\to\infty$ in the sense that we required $u-U_\ell\in \SS^p_0(\TT_\infty)$. This suffices to prove the following quasi-Pythagoras theorem.

\begin{proposition}\label{prop:quasiqo}
For any $0<\eps<1$, there exists $\ell_0\in\N$ such that
\begin{align}\label{eq:quasiqo}
\enorm{U_{\ell+1}-U_\ell}^2\leq \frac{1}{1-\eps}\,\enorm{u-U_\ell}^2-\enorm{u-U_{\ell+1}}^2
\end{align}
for all $\ell\geq \ell_0$.
\end{proposition}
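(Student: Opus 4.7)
The starting point is the identity
\begin{equation*}
\enorm{U_{\ell+1}-U_\ell}^2 + \enorm{u-U_{\ell+1}}^2 = \enorm{u-U_\ell}^2 - b(U_{\ell+1}-U_\ell,\,u-U_{\ell+1}),
\end{equation*}
which follows by bilinear expansion of $\enorm{u-U_{\ell+1}}^2 = b((u-U_\ell)-(U_{\ell+1}-U_\ell),\,(u-U_\ell)-(U_{\ell+1}-U_\ell))$ combined with Galerkin orthogonality $b(u-U_{\ell+1},\,U_{\ell+1}-U_\ell)=0$ (valid since $U_{\ell+1}-U_\ell\in\SS^p_0(\TT_{\ell+1})$). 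Consequently,~\eqref{eq:quasiqo} reduces to showing that, for every $\delta>0$, the \emph{non-symmetric defect} satisfies $|b(U_{\ell+1}-U_\ell,\,u-U_{\ell+1})|\leq\delta\,\enorm{u-U_\ell}^2$ for all sufficiently large $\ell$; the choice $\delta=\eps/(1-\eps)$ then yields the stated constant.

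To expose the compact perturbation, split $b=b_{\operator{A}}+b_{\operator{K}}$ with symmetric part $b_{\operator{A}}(v,w):=\int_\Omega \matrix{A}\nabla v\cdot\nabla w\,dx$ and lower-order part $b_{\operator{K}}(v,w):=\int_\Omega(\vector{b}\cdot\nabla v+cv)w\,dx=\dual{\operator{K}v}{w}$, where $\operator{K}$ is the compact operator of Lemma~\ref{lem:compact}. Symmetry of $b_{\operator{A}}$ and Galerkin orthogonality yield
\begin{equation*}
b_{\operator{A}}(U_{\ell+1}-U_\ell,\,u-U_{\ell+1}) = b_{\operator{A}}(u-U_{\ell+1},\,U_{\ell+1}-U_\ell) = -b_{\operator{K}}(u-U_{\ell+1},\,U_{\ell+1}-U_\ell),
\end{equation*}
so that the defect can be expressed purely through $\operator{K}$:
\begin{equation*}
b(U_{\ell+1}-U_\ell,\,u-U_{\ell+1}) = \dual{\operator{K}(U_{\ell+1}-U_\ell)}{u-U_{\ell+1}} - \dual{\operator{K}(u-U_{\ell+1})}{U_{\ell+1}-U_\ell}.
\end{equation*}

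Finally, dividing by $\normLtwo{\nabla(u-U_\ell)}{\Omega}^2$ and invoking the norm equivalence~\eqref{eq:normequiv}, it remains to show that the normalized defect tends to zero. The sequences $e_\ell,\,E_\ell$ from Lemma~\ref{lem:weakconv} are both bounded in $H^1_0(\Omega)$: trivially $\normLtwo{\nabla e_\ell}{\Omega}\leq 1$, while the Galerkin identity $\enorm{U_{\ell+1}-U_\ell}^2 = b(u-U_\ell,\,U_{\ell+1}-U_\ell)$ together with~\eqref{eq:continuous}--\eqref{eq:elliptic} gives $\normLtwo{\nabla E_\ell}{\Omega}\leq\c{continuous}/\c{elliptic}$. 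Since Lemma~\ref{lem:weakconv} supplies $E_\ell\rightharpoonup 0$ and therefore also $e_\ell-E_\ell\rightharpoonup 0$ weakly in $H^1_0(\Omega)$, compactness of $\operator{K}$ (Lemma~\ref{lem:compact}) upgrades this to strong convergence $\operator{K}E_\ell\to 0$ and $\operator{K}(e_\ell-E_\ell)\to 0$ in $H^{-1}(\Omega)$. The $H^{-1}$--$H^1_0$ duality pairing of a strongly convergent sequence against a bounded one then produces the desired vanishing. The main obstacle is precisely this last step: reducing the non-symmetric defect to bilinear pairings in which the compactness of $\operator{K}$ takes effect, a reduction that fundamentally relies on the prior convergence $u\in\SS^p_0(\TT_\infty)$ from Proposition~\ref{prop:conv}, propagated through Lemma~\ref{lem:weakconv}, and therefore requires no assumption whatsoever on the mesh-size of $\TT_\ell$.
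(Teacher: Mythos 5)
Your proof is correct and follows essentially the same route as the paper: the Galerkin-orthogonal Pythagoras expansion, reduction of the non-symmetric cross-term to $\operator{K}$-pairings via symmetry of $\operator{A}$ and Galerkin orthogonality, and then the weak-to-strong upgrade via compactness of $\operator{K}$ applied to the normalized error sequences $e_\ell,\,E_\ell$ from Lemma~\ref{lem:weakconv}. The only differences are cosmetic: you normalize by $\normLtwo{\nabla(u-U_\ell)}{\Omega}^2$ throughout and obtain the defect bound $\delta\,\enorm{u-U_\ell}^2$ directly (avoiding the final Young inequality the paper uses), and you make explicit the uniform bound $\normLtwo{\nabla E_\ell}{\Omega}\le\c{continuous}/\c{elliptic}$ which the paper leaves implicit.
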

\begin{proof}
Lemma~\ref{lem:weakconv} shows that $e_\ell,E_\ell\rightharpoonup 0 $ as $\ell\to \infty$. Due to Lemma~\ref{lem:compact}, $\operator{K}$ is compact. Therefore, we have strong convergence  $\operator{K}e_\ell, \operator{K} E_\ell \to 0$ in $H^{-1}(\Omega)$ as $\ell\to\infty$ . This shows
\begin{align*}
\dual{\operator{K}(u-U_{\ell+1})}{U_{\ell+1}-U_\ell} &= \dual{\operator{K}e_{\ell+1}}{U_{\ell+1}-U_\ell} \normLtwo{\nabla(u-U_{\ell+1})}{\Omega}\\
&\leq \normHme{\operator{K}e_{\ell+1}}{\Omega} 
\normLtwo{\nabla(u-U_{\ell+1})}{\Omega}\normLtwo{\nabla(U_{\ell+1}-U_\ell)}{\Omega}
\end{align*}
as well as
\begin{align*}
\dual{\operator{K}(U_{\ell+1}-U_\ell)}{u-U_{\ell+1}} &= \dual{\operator{K}E_\ell}{u-U_{\ell+1}} \normLtwo{\nabla(U_{\ell+1}-U_\ell)}{\Omega}\\
&\leq \normHme{\operator{K}E_\ell}{\Omega} 
\normLtwo{\nabla(u-U_{\ell+1})}{\Omega}\normLtwo{\nabla(U_{\ell+1}-U_\ell)}{\Omega}.
\end{align*}
For any $\delta>0$, this may be employed to obtain some $\ell_0\in\N$ such that for all $\ell\geq \ell_0$, it holds
\begin{align*}
 |\dual{\operator{K}(U_{\ell+1}-U_\ell)}{u-U_{\ell+1}}|
&+ |\dual{\operator{K}(u-U_{\ell+1})}{U_{\ell+1}-U_\ell}|\\
&\leq \delta \normLtwo{\nabla(u-U_{\ell+1})}{\Omega}\normLtwo{\nabla(U_{\ell+1}-U_\ell)}{\Omega}.
\end{align*}
Together with Galerkin orthogonality 
\begin{align}\label{eq:galorth}
0=b(u-U_{\ell+1},V_{\ell+1})=\dual{\operator{L}(u-U_{\ell+1})}{V_{\ell+1}}\quad\text{for all }V_{\ell+1}\in\SS_0^p(\TT_{\ell+1}),
\end{align}
we estimate
\begin{align}\label{eq:key}
\begin{split}
|\dual{\operator{L}(U_{\ell+1}-U_\ell)}{u-U_{\ell+1}}|&= |\dual{\operator{A}(u-U_{\ell+1})}{U_{\ell+1}-U_\ell} + \dual{\operator{K}(U_{\ell+1}-U_\ell)}{u-U_{\ell+1}}|\\
&\leq  |\dual{\operator{L}(u-U_{\ell+1})}{U_{\ell+1}-U_\ell}| + |\dual{\operator{K}(U_{\ell+1}-U_\ell)}{u-U_{\ell+1}}|\\
&\qquad\qquad +  |\dual{\operator{K}(u-U_{\ell+1})}{U_{\ell+1}-U_\ell}|\\
&\leq \delta\normLtwo{\nabla(u-U_{\ell+1})}{\Omega}\normLtwo{\nabla(U_{\ell+1}-U_\ell)}{\Omega}.
\end{split}
\end{align}
The definition of $\enorm{\cdot}$ and Galerkin orthogonality~\eqref{eq:galorth} yield
\begin{align*}
\enorm{u-U_{\ell+1}}^2 + \enorm{U_{\ell+1}-U_\ell}^2 + \dual{\operator{L}(U_{\ell+1}-U_\ell)}{u-U_{\ell+1}}=\enorm{u-U_\ell}^2,
\end{align*}
whence
\begin{align*}
\enorm{U_{\ell+1}-U_\ell}^2\leq \enorm{u-U_\ell}^2-\enorm{u-U_{\ell+1}}^2 + \delta\c{norm}^2\enorm{u-U_{\ell+1}}\enorm{U_{\ell+1}-U_\ell}.
\end{align*}
The application of Young's inequality $2ab\leq a^2 +b^2 $ and the choice $\eps=\delta\c{norm}^2/2$ conclude the proof.
\end{proof}

\section{Contraction}\label{section:contraction}
The quasi-Pythagoras theorem~\eqref{eq:quasiqo} from Proposition~\ref{prop:quasiqo} allows to prove $R$-linear convergence of the error estimator $\eta_\ell$.
Compared with the analysis of the symmetric case~\cite{ckns}, this is a weaker result. However, $R$-linear convergence is still sufficient to prove quasi-optimal convergence rates in Section~\ref{section:optimality}.
\begin{theorem}\label{thm:rconv}
 There exist constants $0<\setq{rconv}<1$ and $\setc{rconv}>0$ such that for all $\ell,k\in\N$, there holds
\begin{align}\label{eq:rconv}
 \eta_{\ell+k}^2 \leq \c{rconv}\q{rconv}^k\, \eta_\ell^2.
\end{align}
The constants $\q{rconv}$ and $\c{rconv}$ depend only on $\q{estred}$, $\c{estred}$, $\c{norm}$, and $\c{reliable}$.
\end{theorem}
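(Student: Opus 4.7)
The plan is to combine estimator reduction (Lemma~\ref{lem:estred}), quasi-Galerkin orthogonality (Proposition~\ref{prop:quasiqo}), and reliability~\eqref{eq:reliable} into a one-step contraction for the combined functional $\Lambda_\ell:=\eta_\ell^2+\lambda\enorm{u-U_\ell}^2$, and then to deduce $R$-linear convergence of $\eta_\ell$ by iteration.

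I first fix $\eps\in(0,1)$ (to be specified below), let $\ell_0=\ell_0(\eps)$ be as in Proposition~\ref{prop:quasiqo}, and set $\lambda:=\c{estred}\c{norm}^2$. Substituting the quasi-Pythagoras bound $\enorm{U_{\ell+1}-U_\ell}^2\leq (1-\eps)^{-1}\enorm{u-U_\ell}^2-\enorm{u-U_{\ell+1}}^2$ into the estimator reduction~\eqref{eq:estred} after applying~\eqref{eq:normequiv} produces, for every $\ell\geq\ell_0$,
\[
\Lambda_{\ell+1}\;\leq\;\q{estred}\,\eta_\ell^2+\frac{\lambda}{1-\eps}\enorm{u-U_\ell}^2.
\]
Since $1/(1-\eps)>1$, this alone does not yield a contraction of $\Lambda_\ell$. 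I therefore use reliability~\eqref{eq:reliable} together with~\eqref{eq:normequiv} to split, for any $\mu\in(0,1)$,
\[
\enorm{u-U_\ell}^2\;\leq\;\mu\,\enorm{u-U_\ell}^2+(1-\mu)\,\c{norm}^2\c{reliable}^2\,\eta_\ell^2,
\]
which substituted gives, with $K:=\c{estred}\c{norm}^4\c{reliable}^2$,
\[
\Lambda_{\ell+1}\;\leq\;\Bigl[\q{estred}+\frac{(1-\mu)K}{1-\eps}\Bigr]\eta_\ell^2+\frac{\mu\lambda}{1-\eps}\enorm{u-U_\ell}^2.
\]
For $\Lambda_{\ell+1}\leq\rho\,\Lambda_\ell$, the coefficient of $\eta_\ell^2$ must be at most $\rho$ and that of $\enorm{u-U_\ell}^2$ at most $\rho\lambda$, leading to the two constraints $1-(\rho-\q{estred})(1-\eps)/K\leq\mu\leq\rho(1-\eps)$. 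These are compatible whenever $\rho>(K+\q{estred})/(K+1)$ --- which is strictly less than $1$ --- and $\eps$ is then chosen sufficiently small. Fixing such $\rho$, $\eps$, $\mu$ --- all determined by $\q{estred},\c{estred},\c{norm},\c{reliable}$ alone --- gives the one-step contraction $\Lambda_{\ell+1}\leq\rho\,\Lambda_\ell$ for $\ell\geq\ell_0$.

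Iterating yields $\Lambda_{\ell+k}\leq\rho^k\Lambda_\ell$ for $\ell\geq\ell_0$, and combining with $\eta_{\ell+k}^2\leq\Lambda_{\ell+k}$ and the reliable bound $\Lambda_\ell\leq(1+\lambda\c{norm}^2\c{reliable}^2)\eta_\ell^2$ establishes~\eqref{eq:rconv} on the tail $\ell\geq\ell_0$ with $\q{rconv}:=\rho$. The finitely many cases $\ell<\ell_0$ are handled by boundedness of the sequence $(\eta_\ell)$, which converges to $0$ by Proposition~\ref{prop:conv}, so that I can inflate $\c{rconv}$ to enforce~\eqref{eq:rconv} for every $\ell\in\N$.

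The main obstacle is the balancing step: the canonical choice of $\lambda$ leaves behind a prefactor $1/(1-\eps)>1$ on the error term and therefore produces no contraction, and the crucial insight is to trade this away against $\eta_\ell^2$ using reliability. Checking that the two resulting constraints on $\mu$ are compatible --- which forces $\rho$ close to $1$ and $\eps$ small --- is the technical heart of the proof and is where reliability and the quasi-orthogonality from Proposition~\ref{prop:quasiqo} enter jointly.
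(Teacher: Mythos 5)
Your proof is correct, and it takes a genuinely different (and cleaner) route than the paper. Where the paper multiplies by $\alpha$, sums the estimator reduction from $\ell+1$ to $N$, telescopes via quasi-Pythagoras, and then converts the resulting bound $\sum_{k\geq\ell+1}\eta_k^2\lesssim\eta_\ell^2$ into $R$-linear decay, you instead build a Lyapunov-type functional $\Lambda_\ell=\eta_\ell^2+\lambda\enorm{u-U_\ell}^2$ and prove a direct one-step contraction $\Lambda_{\ell+1}\leq\rho\,\Lambda_\ell$ for $\ell\geq\ell_0$, mirroring the classical CKNS contraction argument for the symmetric case. The essential technical idea --- trading the $1/(1-\eps)$ excess from the quasi-Pythagoras inequality against $\eta_\ell^2$ using reliability --- is the same in both proofs; the paper embeds it into the summation (the $\alpha$-trick combined with the choice $1/(1-\eps)\leq 1+\delta$), you encode it as the split $\enorm{u-U_\ell}^2\leq\mu\enorm{u-U_\ell}^2+(1-\mu)\c{norm}^2\c{reliable}^2\eta_\ell^2$ inside $\Lambda_\ell$. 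Your constraint analysis is correct: the two inequalities on $\mu$ are compatible for $\rho>(K+\q{estred})/(K+1)<1$ (since $\q{estred}<1$) and $\eps$ small, which is exactly the window you identify. One remark on the tail: your final sentence about ``inflating $\c{rconv}$'' over the finitely many indices $\ell<\ell_0$ is slightly informal --- you should at least note that $\eta_\ell=0$ forces $\eta_{\ell+k}=0$ for all $k$ by nestedness and uniqueness, so the relevant ratios $\eta_{\ell+k}^2/(\rho^k\eta_\ell^2)$ over the finite index set are well-defined and bounded --- but the paper's own $C_{\rm sup}$-argument has the same flavour, so this is merely a matter of presentation. Your contraction formulation is arguably the more transparent of the two and generalizes in the same way (linearity and symmetry are nowhere used), so it substitutes cleanly for the paper's proof.
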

\begin{proof}
We employ the estimator reduction~\eqref{eq:estred} and reliability~\eqref{eq:reliable} to obtain for $N\geq \ell+1$ and $\alpha<1-\q{estred}$
\begin{align*}
 \sum_{k=\ell+1}^N \eta_k^2 &\leq \sum_{k=\ell+1}^N \big(\q{estred}\eta_{k-1}^2 +\c{estred}\normLtwo{\nabla(U_k-U_{k-1})}{\Omega}^2\big)\\
&\leq  \sum_{k=\ell+1}^N \Big((\q{estred}+\alpha)\eta_{k-1}^2 +\c{estred}\big(\normLtwo{\nabla(U_k-U_{k-1})}{\Omega}^2-\alpha\c{reliable}^{-2}\c{estred}^{-1}\normLtwo{\nabla(u-U_{k-1})}{\Omega}^2\big)\Big),
\end{align*}
Rearranging the terms in the above estimate, we end up with
\begin{align*}
 (1-\q{estred}-\alpha) \sum_{k=\ell+1}^N \eta_k^2 &
\leq (1+\q{estred}+\alpha)\eta_{\ell}^2 + \c{estred}\c{norm}^2\sum_{k=\ell+1}^N \big(\enorm{U_k-U_{k-1}}^2- \delta\enorm{u-U_{k-1}}^2\big).
\end{align*}
where $\delta= \alpha\c{reliable}^{-2}\c{estred}^{-1}\c{norm}^{-4}$.
Next, we aim at proving that the sum on the right-hand side is bounded above by $\eta_\ell^2$ for all $N\in\N$.
To that end, we employ Lemma~\ref{prop:quasiqo} with $\eps>0$ such that $1/(1-\eps) \leq 1+\delta$.
This gives a number $\ell_0\in \N$ such that for all $N>\ell\geq \ell_0$, we may estimate 
\begin{align}\label{eq:rconvhelp2}
 \sum_{k=\ell+1}^N \big(\enorm{U_{k}-U_{k-1}}^2 -\delta\enorm{u-U_{k-1}}^2\big)
&\leq\sum_{k=\ell+1}^N \big((\frac{1}{1-\eps}-\delta)\enorm{u-U_{k-1}}^2-\enorm{u-U_{k}}^2\big)\nonumber\\
&\leq\sum_{k=\ell+1}^N \big(\enorm{u-U_{k-1}}^2-\enorm{u-U_{k}}^2\big)\\
&\leq \enorm{u-U_\ell}^2\leq \c{norm}^2\c{reliable}^2\eta_\ell^2.\nonumber
\end{align}
For all $\ell<\ell_0$, we first observe that $\enorm{u-U_\ell}=0$ implies $\enorm{U_k-U_{k-1}}=0$ for all $k\geq \ell+1$, since $U_k=u=U_{k-1}$. Therefore, we obtain with the convention $\infty\cdot 0=0$
\begin{align*}
 C_{\rm sup}:=\sup_{\ell\in\{1,\ldots,\ell_0\}} \Big(\enorm{u-U_\ell}^{-2}\sum_{k=\ell+1}^{\ell_0}\enorm{U_{k}-U_{k-1}}^2\Big)<\infty.
\end{align*}
In combination with~\eqref{eq:rconvhelp2}, we thus see
\begin{align*}
 \sum_{k=\ell+1}^N& \big(\enorm{U_k-U_{k-1}}^2 -\delta\enorm{u-U_{k-1}}^2\big)\leq (1+C_{\rm sup})\c{norm}^2\c{reliable}^2\eta_\ell^2\quad\text{for all }\ell\in\N,\,N> \ell.
\end{align*}
Plugging everything together, we have so far shown
\begin{align}\label{eq:rconvfinal}
 \sum_{k=\ell+1}^\infty \eta_k^2 \leq\c{help}\eta_\ell^2\quad\text{for all }\ell\in\N,
\end{align}
for some constant $\setc{help}>0$ which depends only on $\q{estred}$, $\c{estred}$, $\c{norm}$, and $\c{reliable}$.
Therefore, we get
\begin{align*}
(1+\c{help}^{-1}) \sum_{k=\ell+1}^\infty \eta_k^2 \leq  \sum_{k=\ell+1}^\infty \eta_k^2  +\eta_\ell^2 =  \sum_{k=\ell}^\infty \eta_k^2,
\end{align*}
and hence by induction
\begin{align*}
 \eta_{\ell+j}^2\leq \sum_{k=\ell+j}^\infty \eta_k^2\leq (1+\c{help}^{-1})^{-j}\sum_{k=\ell}^\infty \eta_k^2\leq (1+\c{help})(1+\c{help}^{-j})^{-k}\eta_\ell^2
\quad\text{for all }\ell,k\in\N.
\end{align*}
This concludes the proof with $\q{rconv}=1/(1+\c{help}^{-1})$ and $\c{rconv}=(1+\c{help})$.
\end{proof}

\begin{remark}
Note that the $R$-linear convergence of Theorem~\ref{thm:rconv} holds for arbitrary adaptivity parameters $0<\theta<1$. Moreover, the result is independent of NVB in the sense that the proof only requires that $|T^\prime|\leq q|T|$ for some $0<q<1$ and all sons $T^\prime\subset T$ of refined elements $T\in\TT_{\ell}\setminus\TT_{\ell+1}$. This property holds for each feasible mesh-refinement strategy and for NVB with $q=2^{-1/d}$. Finally, the minimal cardinality of the set $\MM_\ell$ of marked elements has not been used, yet. Instead, Theorem~\ref{thm:rconv} holds as long as the set $\MM_\ell\subseteq \TT_\ell$ satisfies the D\"orfler marking~\eqref{eq:doerfler} and, in particular, for $\MM_\ell=\TT_\ell$.
\end{remark}

\begin{remark}
 Note that the proof of Theorem~\ref{thm:rconv} does neither use linearity  nor uniform ellipticity of $\operator{L}$. Instead, we only require reliability~\eqref{eq:reliable}, estimator reduction~\eqref{eq:estred}, quasi-Galerkin orthogonality~\eqref{eq:quasiqo} as well as equivalence~\eqref{eq:normequiv} of the norm $\normLtwo{\nabla(\,\cdot\,)}{\Omega}$ and the energy quasi-norm $\enorm{\cdot}$ on $H^1_0(\Omega)$. With these ingredients, our analysis is thus also capable to cover certain nonlinear problems as discussed in Section~\ref{section:nonlin}.
\end{remark}

\section{Optimal Convergence Rates}\label{section:optimality}
With Theorem~\ref{thm:rconv} at hand, we are in the position to prove quasi-optimal convergence rates for the sequence of Galerkin solutions obtained from Algorithm~\ref{algorithm}. First, however, we have to clarify what is the best possible convergence rate that can be aimed at. To that end, we follow e.g.~\cite{ckns} and define the approximation class $\A_s$ by
\begin{subequations}\label{eq:approxclasstotalerror}
\begin{align}\label{eq:approxclasstotalerrora}
 (u,f)\in\A_s\quad\overset{\rm def}{\Longleftrightarrow}\quad \norm{(u,f)}{\A_s}:=\sup_{N\in\N}N^s\sigma(N;u,f)<\infty
\end{align}
for all $s>0$, where
\begin{align}\label{eq:approxclasstotalerrorb}
 \sigma(N;u,f):=\inf_{\TT_\star\in\T_N}\inf_{V_\star\in\SS^p_0(\TT_\star)}\big(\normLtwo{\nabla(u-V_\star)}{\Omega}^2+\osc_\star(V_\star)^2\big)^{1/2}
\end{align}
\end{subequations}
and $\osc_\star$ is the oscillation term from~\eqref{eq:efficient} corresponding to the mesh $\TT_\star$. We refer to~\cite{bddp,gm} for a characterization of approximation classes in terms of Besov regularity. However, in this work, we follow~\cite{dirichlet3d} and use an equivalent definition of $\A_s$, which involves the error estimator $\eta_\ell$ only. This equivalence is part of the next lemma which is also implicitly contained in~\cite[Lemma~5.2]{ckns}.

\begin{lemma}\label{lem:totalerror}
There exists a constant $\setc{totalerror}>0$ such that for all $\TT_\star\in\T$ there holds
\begin{align}\label{eq:totalerror}
\c{totalerror}^{-1}\eta_\star^2\leq\inf_{V_\star\in\SS^p_0(\TT_\star)}\big(\normLtwo{\nabla(u-V_\star)}{\Omega}^2+\osc_\star(V_\star)^2\big)\leq\c{totalerror} \eta_\star^2.
\end{align} 
Hence, $\A_s$ from~\eqref{eq:approxclasstotalerror} can equivalently be characterized as 
\begin{align}\label{eq:approxclass}
(u,f)\in\A_s\quad\Longleftrightarrow\quad \sup_{N\in\N}\inf_{\TT_\star\in\T_N}\,N^s\eta_\star<\infty
\end{align}
for all $s>0$. The constant $\c{totalerror}$ depends only on $\c{continuous},\c{elliptic}$, the $\gamma$-shape regularity of $\TT_\star$ and the polynomial degree $p\in\N$.
\end{lemma}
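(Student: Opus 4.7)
The proof plan splits into two parts: establishing the two-sided equivalence~\eqref{eq:totalerror} for a fixed mesh $\TT_\star\in\T$, and then transferring it to the level of approximation classes.

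For the upper bound on the infimum in~\eqref{eq:totalerror}, the plan is to simply plug in the discrete Galerkin solution, $V_\star:=U_\star\in\SS^p_0(\TT_\star)$. Reliability~\eqref{eq:reliable} immediately gives $\normLtwo{\nabla(u-U_\star)}{\Omega}^2 \leq \c{reliable}^2\eta_\star^2$. Moreover, the $L^2$-orthogonal projection $\Pi_\star^{p-1}$ has operator norm $\leq 1$, so $(1-\Pi_\star^{p-1})$ does too, and therefore $\osc_\star(U_\star)^2 \leq \sum_{T\in\TT_\star} |T|^{2/d}\normLtwo{\operator{L}|_T U_\star-f}{T}^2 \leq \eta_\star^2$. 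Summing gives the right inequality of~\eqref{eq:totalerror} with constant $\c{reliable}^2+1$.

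The main work lies in the reverse inequality. The plan is to start with efficiency~\eqref{eq:efficient}, which yields $\c{efficient}^{-1}\eta_\star\leq \normLtwo{\nabla(u-U_\star)}{\Omega}+\osc_\star(U_\star)$, and then replace $U_\star$ by an arbitrary $V_\star\in\SS^p_0(\TT_\star)$. For the error term this is immediate by C\'ea's lemma~\eqref{eq:cea}: $\normLtwo{\nabla(u-U_\star)}{\Omega}\lesssim \normLtwo{\nabla(u-V_\star)}{\Omega}$. For the oscillation term, the triangle inequality combined with contractivity of $(1-\Pi_\star^{p-1})$ gives
\begin{align*}
\osc_\star(U_\star) \leq \osc_\star(V_\star) + \Big(\sum_{T\in\TT_\star} |T|^{2/d}\normLtwo{\operator{L}|_T(U_\star-V_\star)}{T}^2\Big)^{1/2}.
\end{align*}
On the discrete function $U_\star-V_\star\in\SS^p_0(\TT_\star)$, a standard inverse estimate together with the bounds on $\matrix{A}\in W^\infty_1(\Omega)^{d\times d}$, $\vector{b}\in L^\infty(\Omega)^d$, $c\in L^\infty(\Omega)$ and shape regularity~\eqref{refinement:shaperegular} gives $|T|^{2/d}\normLtwo{\operator{L}|_T(U_\star-V_\star)}{T}^2 \lesssim \normLtwo{\nabla(U_\star-V_\star)}{T}^2 + \ldots$, where the dots are lower order terms controlled by shape regularity. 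Summation and another application of C\'ea's lemma bound this by $\normLtwo{\nabla(u-V_\star)}{\Omega}^2$, which closes the argument.

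Once~\eqref{eq:totalerror} is established, the characterization~\eqref{eq:approxclass} of $\A_s$ is immediate: taking $\inf_{V_\star\in\SS^p_0(\TT_\star)}$ followed by $\inf_{\TT_\star\in\T_N}$ in~\eqref{eq:totalerror} yields $\sigma(N;u,f)^2 \simeq \inf_{\TT_\star\in\T_N}\eta_\star^2$ with the same equivalence constants, and then multiplying by $N^{2s}$ and taking the supremum over $N\in\N$ transfers the finiteness of $\norm{(u,f)}{\A_s}$ to finiteness of $\sup_{N}\inf_{\TT_\star\in\T_N}N^s\eta_\star$, and vice versa. I expect the main obstacle to be organizing the inverse estimate in the lower bound carefully enough that all constants only depend on $\c{continuous},\c{elliptic}$, shape regularity, and the polynomial degree, as claimed; in particular the bound on $\normLtwo{\operator{L}|_T(U_\star-V_\star)}{T}$ has to pick up the $L^\infty$-norms of the coefficients in a controlled way, and the final constant should come out independent of $u$, $f$, and the particular mesh.
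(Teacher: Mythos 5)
Your proof is correct and follows essentially the same route as the paper: plug in the Galerkin solution $U_\star$ for the upper bound (reliability plus contractivity of $1-\Pi_\star^{p-1}$), and for the lower bound pass from $U_\star$ to arbitrary $V_\star$ via C\'ea for the error term and via triangle inequality, inverse estimate, and C\'ea again for the oscillation term. The transfer to the approximation-class characterization is also the same.
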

\begin{proof}
First, we prove~\eqref{eq:totalerror}. To that end, we observe $ \normLtwo{\nabla(u-U_\star)}{\Omega}^2+\osc_\star(U_\star)^2\simeq \eta_\star^2$, which follows from reliability~\eqref{eq:reliable}, efficiency~\eqref{eq:efficient} as well as $\osc_\star(U_\star)\leq \eta_\star$. Moreover, the lower bound
 \begin{align}\label{eq:totalerrorhelp}
  \inf_{V_\star\in\SS^p_0(\TT_\star)}\big(\normLtwo{\nabla(u-V_\star)}{\Omega}^2+\osc_\star(V_\star)^2\big)\leq \normLtwo{\nabla(u-U_\star)}{\Omega}^2+\osc_\star(U_\star)^2
 \end{align}
holds since $U_\star\in\SS^p_0(\TT_\star)$. To prove the converse estimate in~\eqref{eq:totalerrorhelp}, we argue as in Lemma~\ref{lem:estred} and use a standard inverse estimate as well as the Poincar\'e inequality, to see
\begin{align*}
 \osc_\star(U_\star)^2&=\sum_{T\in\TT_\star}|T|^{2/d}\normLtwo{(1-\Pi_\star^{p-1})(\operator{L}|_TU_\star-f)}{T}^2\\
 &\lesssim\sum_{T\in\TT_\star}|T|^{2/d}\normLtwo{(1-\Pi_\star^{p-1})\operator{L}|_T(U_\star-V_\star)}{T}^2 + \osc_\star(V_\star)^2\\
 &\lesssim\big(\norm{\matrix{A}}{W_1^\infty(\Omega)}^2+\norm{\vector{b}}{L^\infty(\Omega)}^2+\norm{c}{L^\infty(\Omega)}^2\big)\sum_{T\in\TT_\star}|T|^{2/d}\norm{U_\star-V_\star}{H^2(T)}^2 + \osc_\star(V_\star)^2\\
 &\lesssim \normLtwo{\nabla(U_\star-V_\star)}{\Omega}^2 + \osc_\star(V_\star)^2.
\end{align*}
Finally, by use of the C\'ea lemma, we end up with
\begin{align*}
 \osc_\star(U_\star)^2&\lesssim \normLtwo{\nabla(u-U_\star)}{\Omega}^2 +\normLtwo{\nabla(u-V_\star)}{\Omega}^2 + \osc_\star(V_\star)^2\\
&\lesssim \normLtwo{\nabla(u-V_\star)}{\Omega}^2 + \osc_\star(V_\star)^2.
 \end{align*}
The combination of the last three estimates proves~\eqref{eq:totalerror}. The characterization~\eqref{eq:approxclass} follows with~\eqref{eq:totalerror} and the definition of $\sigma(N;u,f)$ in~\eqref{eq:approxclasstotalerrorb}.
\end{proof}

In our opinion, this characterization allows for a clearer presentation of the proof of the following quasi-optimality theorem and, in particular, we shall see that unlike the analysis of~\cite{ckns,cn,ks,stevenson07}, the upper bound for optimal adaptivity parameters $0<\theta<1$ does not depend on the efficiency constant $\c{efficient}$. The following result is the main theorem of this section.
\begin{theorem}\label{thm:optimal}
Define $\theta_\star:= (1+\c{inv}\c{drel})^{-1}$ with the constants $\c{drel}>0$ from Lemma~\ref{lem:drel} and $\c{inv}>0$ from the proof of Lemma~\ref{lem:estred}. 
Then, for all adaptivity parameters $0<\theta<\theta_\star$ and all $s>0$, there exists a constant $\setc{optimal}>0$ such that
\begin{align}\label{eq:optimality}
 (u,f)\in\A_s\quad\Longleftrightarrow\quad \eta_\ell \leq \c{optimal}\norm{(u,f)}{\A_s}(\#\TT_\ell-\#\TT_0)^{-s}\quad\text{for all }\ell\in\N.
\end{align}
The constant $\c{optimal}$ depends only on $\theta$, $s$, $\q{rconv}$, $\c{rconv}$, $\c{efficient}$, and $\c{mesh}$, and the proof relies on the properties~\eqref{refinement:shaperegular}--\eqref{refinement:overlay} of NVB.
\end{theorem}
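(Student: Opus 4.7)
The plan is to follow the Stevenson/CKNS quasi-optimality paradigm, using discrete reliability (Lemma~\ref{lem:drel}) and the $R$-linear convergence from Theorem~\ref{thm:rconv} to bypass any use of the efficiency estimate. The implication ``$\Longleftarrow$'' will be essentially immediate from the characterization~\eqref{eq:approxclass}: given any budget $N\in\N$, I would pick the largest $\ell$ with $\#\TT_\ell-\#\TT_0\le N$; since NVB inflates the element count by at most a constant factor per step, $\#\TT_\ell-\#\TT_0\gtrsim N$, so $\TT_\ell\in\T_N$ satisfies $N^s\eta_\ell\lesssim\norm{(u,f)}{\A_s}$, which yields $(u,f)\in\A_s$.

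For ``$\Longrightarrow$'' the heart of the argument is to prove optimality of the D\"orfler selection, namely $\#\MM_\ell\lesssim\norm{(u,f)}{\A_s}^{1/s}\eta_\ell^{-1/s}$ for every $\ell$. Fixing $\ell$ and a slack $\alpha>0$ to be chosen later (depending only on $\theta,\c{inv},\c{drel}$), I would use~\eqref{eq:approxclass} to select the smallest $N$ of order $\alpha^{-1/(2s)}\norm{(u,f)}{\A_s}^{1/s}\eta_\ell^{-1/s}$ for which some $\TT_\eps\in\T_N$ satisfies $\eta_\eps^2\le\alpha\eta_\ell^2$, and form the overlay $\TT_\star:=\TT_\ell\oplus\TT_\eps$; by~\eqref{refinement:overlay}, $\#\TT_\star-\#\TT_\ell\le\#\TT_\eps-\#\TT_0\le N$. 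Writing $\RR:=\TT_\ell\setminus\TT_\star$, I would split $\eta_\ell^2=\eta_\ell^2(\RR)+\eta_\ell^2(\TT_\ell\cap\TT_\star)$ and bound the common part by a Young--inverse-estimate step analogous to the one in Lemma~\ref{lem:estred}; discrete reliability (Lemma~\ref{lem:drel}) then absorbs $\normLtwo{\nabla(U_\star-U_\ell)}{\Omega}^2$ into $\c{drel}\eta_\ell^2(\RR)$, and the same perturbation argument applied between $\TT_\eps$ and its refinement $\TT_\star$ gives $\eta_\star^2\lesssim\eta_\eps^2\le\alpha\eta_\ell^2$. Rearranging will yield an inequality of the shape $\eta_\ell^2(\RR)\ge(1+\c{inv}\c{drel})^{-1}\bigl(1-O(\alpha)\bigr)\,\eta_\ell^2$, and choosing $\alpha$ sufficiently small forces the D\"orfler bound $\eta_\ell^2(\RR)\ge\theta\eta_\ell^2$, valid precisely because $\theta<\theta_\star=(1+\c{inv}\c{drel})^{-1}$. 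Minimality of $\MM_\ell$ in~\eqref{eq:doerfler} then gives $\#\MM_\ell\le\#\RR\le N\lesssim\norm{(u,f)}{\A_s}^{1/s}\eta_\ell^{-1/s}$.

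With this per-step bound, the target rate~\eqref{eq:optimality} will follow by combining the closure estimate~\eqref{refinement:closure} with the $R$-linear convergence of Theorem~\ref{thm:rconv}. Summing yields
\[
\#\TT_\ell-\#\TT_0\le\c{mesh}\sum_{j=0}^{\ell-1}\#\MM_j\lesssim\norm{(u,f)}{\A_s}^{1/s}\sum_{j=0}^{\ell-1}\eta_j^{-1/s},
\]
and Theorem~\ref{thm:rconv} delivers $\eta_j^{-1/s}\lesssim\q{rconv}^{(\ell-j)/(2s)}\eta_\ell^{-1/s}$. The resulting geometric sum is uniformly bounded in $\ell$, so $\#\TT_\ell-\#\TT_0\lesssim\norm{(u,f)}{\A_s}^{1/s}\eta_\ell^{-1/s}$, which rearranges to~\eqref{eq:optimality}.

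The main obstacle is the D\"orfler-optimality step in the second paragraph: one must quantify sharply how small $\eta_\eps^2/\eta_\ell^2$ has to be so that $\RR=\TT_\ell\setminus(\TT_\ell\oplus\TT_\eps)$ actually realises $\theta$-D\"orfler for $\eta_\ell$. The link between the budget $N$ and $\eta_\ell$ is lost without this sharp quantification, and it is precisely here that the threshold $\theta<\theta_\star=(1+\c{inv}\c{drel})^{-1}$ is forced: any larger $\theta$ leaves the leading coefficient on $\eta_\ell^2$ non-positive and no admissible slack $\alpha>0$ exists.
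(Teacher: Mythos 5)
Your proposal reproduces the paper's argument essentially step-for-step: you establish D\"orfler optimality via the overlay $\TT_\ell\oplus\TT_\eps$, the Young/inverse-estimate perturbation from Lemma~\ref{lem:estred}, and discrete reliability (this is exactly the content of the paper's Lemma~\ref{lem:doerfler}, which you inline rather than state separately), then combine minimality of $\MM_\ell$, the overlay bound~\eqref{refinement:overlay}, the closure estimate~\eqref{refinement:closure}, and $R$-linear convergence from Theorem~\ref{thm:rconv} to sum the geometric series. The argument is correct and matches the paper's proof, including the identification of the threshold $\theta_\star=(1+\c{inv}\c{drel})^{-1}$.
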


For the proof of the quasi-optimality theorem, we need a refined reliability property of the error estimator $\eta_\ell$.
\begin{lemma}[discrete reliability]\label{lem:drel}
There exists a constant $\setc{drel}>0$ such that for all refinements $\TT_\star\in\T$ of a triangulation $\TT_\ell\in\T$, it holds
\begin{align}\label{eq:drel}
 \normLtwo{\nabla(U_\star-U_\ell)}{\Omega}^2\leq \c{drel}\sum_{T\in\TT_\ell\setminus\TT_\star}\eta_\ell(T)^2.
\end{align}
The constant $\c{drel}$ depends only on the $\gamma$-shape regularity of $\TT_0$, the polynomial degree $p\in\N$, and on $\Omega$.
\end{lemma}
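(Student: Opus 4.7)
The plan is to follow the standard strategy for discrete reliability in the symmetric elliptic setting, e.g.~\cite{ckns,stevenson07}, with the only structural change being that the lower bound on the energy comes from ellipticity~\eqref{eq:elliptic} rather than $b(\cdot,\cdot)$ being an inner product. Coercivity yields $\c{elliptic}\normLtwo{\nabla(U_\star-U_\ell)}{\Omega}^2 \leq b(U_\star-U_\ell,U_\star-U_\ell)$, and since $\SS^p_0(\TT_\ell)\subseteq\SS^p_0(\TT_\star)$, both $U_\ell$ and $U_\star$ satisfy~\eqref{eq:discrete} against test functions from $\SS^p_0(\TT_\ell)$. Subtraction yields the Galerkin orthogonality $b(U_\star-U_\ell,V_\ell)=0$ for all $V_\ell\in\SS^p_0(\TT_\ell)$, so
\[
 \c{elliptic}\normLtwo{\nabla(U_\star-U_\ell)}{\Omega}^2 \leq b(U_\star-U_\ell,w),\qquad w:=(U_\star-U_\ell)-V_\ell,
\]
for any $V_\ell\in\SS^p_0(\TT_\ell)$, and the task reduces to bounding $b(U_\star-U_\ell,w)$ for a well-chosen $V_\ell$.

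Since $w\in\SS^p_0(\TT_\star)$, the discrete equation~\eqref{eq:discrete} on $\TT_\star$ combined with elementwise integration by parts over $\TT_\ell$ produces the residual decomposition
\[
 b(U_\star-U_\ell,w) = \sum_{T\in\TT_\ell}\int_T(f-\operator{L}|_TU_\ell)\,w\,dx - \sum_E\int_E[\matrix{A}\nabla U_\ell\cdot n]\,w\,ds,
\]
where the second sum runs over the interior facets of $\TT_\ell$. The crucial step is to choose $V_\ell=J_\ell(U_\star-U_\ell)$ for a Scott--Zhang-type quasi-interpolation $J_\ell:H^1_0(\Omega)\to\SS^p_0(\TT_\ell)$ with the locality property that $J_\ell v|_T=v|_T$ for every $T\in\TT_\ell\cap\TT_\star$ whenever $v\in\SS^p_0(\TT_\star)$. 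Such an operator is obtained by assigning, to each nodal degree of freedom $z$ of $\TT_\ell$, a defining simplex $T_z$ inside $\TT_\ell\cap\TT_\star$ whenever some element of $\TT_\ell\cap\TT_\star$ contains $z$; then $v|_{T_z}\in\PP^p$, the Scott--Zhang values agree with those of $v|_T$ at all nodes of any $T\in\TT_\ell\cap\TT_\star$, and unisolvence forces $J_\ell v|_T=v|_T$. With this choice $w$ vanishes identically on every unrefined element, and continuity of $w\in H^1_0(\Omega)$ then forces $w|_E=0$ on every interior facet $E$ adjacent to such an element.

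Consequently, both sums above collapse to contributions supported in $\TT_\ell\setminus\TT_\star$. The first-order Scott--Zhang approximation estimates $\normLtwo{w}{T}\lesssim|T|^{1/d}\normLtwo{\nabla(U_\star-U_\ell)}{\omega_\ell(T)}$ and $\normLtwo{w}{E}\lesssim|T|^{1/(2d)}\normLtwo{\nabla(U_\star-U_\ell)}{\omega_\ell(T)}$ for $E\subset\partial T$, together with Cauchy--Schwarz and the bounded overlap of patches guaranteed by shape regularity~\eqref{refinement:shaperegular}, deliver
\[
 |b(U_\star-U_\ell,w)| \lesssim \Big(\sum_{T\in\TT_\ell\setminus\TT_\star}\eta_\ell(T)^2\Big)^{1/2}\normLtwo{\nabla(U_\star-U_\ell)}{\Omega}.
\]
Combining this with the coercivity estimate and dividing by $\normLtwo{\nabla(U_\star-U_\ell)}{\Omega}$ produces~\eqref{eq:drel}.

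The main obstacle is the Scott--Zhang construction: the node-selection rule must be simultaneously compatible with the homogeneous Dirichlet boundary condition (so that $J_\ell$ maps into $\SS^p_0(\TT_\ell)$) and with the locality property on $\TT_\ell\cap\TT_\star$. This is a purely geometric issue, standard in the AFEM literature, and the remaining estimates are routine. We note that the argument invokes neither symmetry nor linearity of $\operator{L}$ explicitly; only ellipticity, Galerkin orthogonality, and the elementwise residual decomposition enter, which will be useful when bootstrapping to the non-linear setting of Section~\ref{section:nonlin}.
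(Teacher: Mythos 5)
Your argument is correct and is precisely the one the paper relies on: the paper's own proof consists solely of the citation to~\cite[Lemma~3.6]{ckns} with the remark that the argument there carries over verbatim, and your derivation (ellipticity plus Galerkin orthogonality, elementwise integration by parts, and a Scott--Zhang interpolant with node selection forcing $J_\ell v|_T=v|_T$ on unrefined elements, so that all residual and jump contributions collapse to $\TT_\ell\setminus\TT_\star$) is exactly that argument spelled out. The one place you flag as delicate, the compatibility of the locality property with the Dirichlet constraint at boundary nodes touched only by refined boundary facets, is indeed harmless since $v\in\SS^p_0(\TT_\star)$ vanishes on $\Gamma$, so the boundary averaging and the polynomial nodal values on unrefined elements agree.
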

\begin{proof}
 The statement is proven for $\vector{b}=0$ and $c\geq 0$ in~\cite[Lemma~3.6]{ckns}. The proof for the present case follows verbatim.
\end{proof}
So far, we have observed that D\"orfler marking~\eqref{eq:doerfler} implies contraction of $\eta_\ell$ (Proposition~\ref{thm:rconv}). Now, we prove, in some sense, the converse. We follow the concept of proof of~\cite{dirichlet3d} and stress that unlike e.g.~\cite{ckns,cn,ks,stevenson07} our proof does not use efficiency~\eqref{eq:efficient} of $\eta_\ell$.
\begin{lemma}[Optimality of D\"orfler marking]\label{lem:doerfler}
Let $0<\theta < \theta_\star:=(1+\c{inv}\c{drel})^{-1}$. Then, there exists $0<\setq{doerfler}<1$ such that for all refinements $\TT_\star\in\T$ of a triangulation $\TT_\ell\in\T$ the following statement is true
\begin{align}
\eta_\star^2 \leq \q{doerfler}\eta_\ell^2 \quad\implies\quad \theta\eta_\ell^2  \leq \sum_{T\in\TT_\ell\setminus\TT_\star} \eta_\ell(T)^2.
\end{align}
\end{lemma}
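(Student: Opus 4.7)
The plan is to transport mass from $\eta_\ell$ onto $\eta_\star$ on the common elements $\TT_\ell\cap\TT_\star$ using the estimator-reduction argument of Lemma~\ref{lem:estred} applied to the pair $(U_\ell,U_\star)$, and to absorb the resulting perturbation $\normLtwo{\nabla(U_\star-U_\ell)}{\Omega}^2$ by means of discrete reliability (Lemma~\ref{lem:drel}). The final bookkeeping reduces to a tuning of the two free parameters $\delta>0$ and $\q{doerfler}\in(0,1)$.

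First, I would split
\begin{align*}
\eta_\ell^2 = \sum_{T\in\TT_\ell\setminus\TT_\star}\eta_\ell(T)^2 + \sum_{T\in\TT_\ell\cap\TT_\star}\eta_\ell(T)^2
\end{align*}
and focus on the second sum. For $T\in\TT_\ell\cap\TT_\star$ the local indicators $\eta_\ell(T)$ and $\eta_\star(T)$ live on the same element $T$, and the Young's-inequality/inverse-estimate step in the proof of Lemma~\ref{lem:estred} (now applied to $U_\star-U_\ell\in\SS_0^p(\TT_\star)$ rather than to $U_{\ell+1}-U_\ell$) yields, for every $\delta>0$,
\begin{align*}
\sum_{T\in\TT_\ell\cap\TT_\star}\eta_\ell(T)^2 \leq (1+\delta)\sum_{T\in\TT_\ell\cap\TT_\star}\eta_\star(T)^2 + (1+\delta^{-1})\c{inv}\normLtwo{\nabla(U_\star-U_\ell)}{\Omega}^2.
\end{align*}
Bounding the first sum on the right by $\eta_\star^2\leq\q{doerfler}\eta_\ell^2$ (the standing hypothesis) and the $H^1$-seminorm term by $\c{drel}\sum_{T\in\TT_\ell\setminus\TT_\star}\eta_\ell(T)^2$ via discrete reliability produces
\begin{align*}
\sum_{T\in\TT_\ell\cap\TT_\star}\eta_\ell(T)^2 \leq (1+\delta)\q{doerfler}\,\eta_\ell^2 + (1+\delta^{-1})\c{inv}\c{drel}\sum_{T\in\TT_\ell\setminus\TT_\star}\eta_\ell(T)^2.
\end{align*}

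Substituting this back into the split of $\eta_\ell^2$ and rearranging produces
\begin{align*}
\bigl[1-(1+\delta)\q{doerfler}\bigr]\eta_\ell^2 \leq \bigl[1+(1+\delta^{-1})\c{inv}\c{drel}\bigr]\sum_{T\in\TT_\ell\setminus\TT_\star}\eta_\ell(T)^2,
\end{align*}
so it suffices to secure $\theta\bigl[1+(1+\delta^{-1})\c{inv}\c{drel}\bigr]+(1+\delta)\q{doerfler}\leq 1$, equivalently $\theta\c{inv}\c{drel}\,\delta^{-1}+(1+\delta)\q{doerfler}\leq 1-\theta(1+\c{inv}\c{drel})$. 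The right-hand side is strictly positive by the hypothesis $\theta<\theta_\star$; I would first fix $\delta$ large enough that $\theta\c{inv}\c{drel}\,\delta^{-1}$ consumes at most half of this budget, and then pick $\q{doerfler}\in(0,1)$ small enough that $(1+\delta)\q{doerfler}$ stays within the remaining half, which finishes the proof.

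The main (and essentially only) technical obstacle is the first displayed estimate: one needs to check that the inverse/trace argument behind the constant $\c{inv}$ in Lemma~\ref{lem:estred} carries over verbatim when the next mesh $\TT_{\ell+1}$ is replaced by an arbitrary refinement $\TT_\star$ of $\TT_\ell$, so that facet jumps on $T\in\TT_\ell\cap\TT_\star$ are interpreted consistently on the two meshes. This is routine because $U_\ell\in\SS_0^p(\TT_\ell)\subseteq\SS_0^p(\TT_\star)$ makes the $U_\ell$-part of each jump invariant under refinement of the neighbors of $T$; once this observation is made, everything else is linear bookkeeping.
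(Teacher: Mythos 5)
Your proof is correct and follows essentially the same route as the paper's: split $\eta_\ell^2$ over $\TT_\ell\setminus\TT_\star$ and $\TT_\ell\cap\TT_\star$, apply the Young/inverse-estimate stability bound of Lemma~\ref{lem:estred} to $U_\star-U_\ell$ on the common elements, insert the hypothesis $\eta_\star^2\le\q{doerfler}\eta_\ell^2$ and discrete reliability, and tune $\delta$ and $\q{doerfler}$. The only cosmetic difference is which side of Young's inequality carries the $(1+\delta)$ factor; your bookkeeping is internally consistent, whereas the paper's two displays quietly swap the roles of $(1+\delta)$ and $(1+\delta^{-1})$, which is immaterial to the conclusion.
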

\begin{proof}
Analogously to~\eqref{eq:stable}, we estimate for $\delta>0$
\begin{align}\label{eq:mon}
\begin{split}
 \eta_\ell^2 &= \sum_{T\in\TT_\ell\setminus\TT_\star}\eta_\ell(T)^2+\sum_{T\in\TT_\ell\cap\TT_\star} \eta_\ell(T)^2\\
&\leq \sum_{T\in\TT_\ell\setminus\TT_\star}\eta_\ell(T)^2+(1+\delta^{-1})\sum_{T\in\TT_\ell\cap\TT_\star} \eta_\star(T)^2 + (1+\delta)\c{inv}\normLtwo{\nabla(U_\star-U_\ell)}{\Omega}^2\\
&\leq \sum_{T\in\TT_\ell\setminus\TT_\star}\eta_\ell(T)^2+(1+\delta^{-1})\q{doerfler}\eta_\ell^2+(1+\delta)\c{inv}\normLtwo{\nabla(U_\star-U_\ell)}{\Omega}^2.
\end{split}
\end{align}
Rearranging the terms and employing the discrete reliability~\eqref{eq:reliable}, we end up with
\begin{align*}
 \frac{1-(1+\delta^{-1})\q{doerfler}}{1+(1+\delta)\c{inv}\c{drel}}\,\eta_\ell^2\leq\sum_{T\in\TT_\ell\setminus\TT_\star}\eta_\ell(T)^2.
\end{align*}
According to $\theta<(1+\c{inv}\c{drel})^{-1}$, we may finally choose $\delta>0$ and $0<\q{doerfler}<1$ sufficiently small to ensure
\begin{align*}
 \theta \leq \frac{1-(1+\delta)\q{doerfler}}{1+(1+\delta^{-1})\c{inv}\c{drel}}< \frac{1}{1+\c{inv}\c{drel}}.
\end{align*}
This concludes the proof.
\end{proof}
Now, we are in the position to prove Theorem~\ref{thm:optimal}. We stress that the concept of proof goes back to~\cite{stevenson07} and has been adopted by~\cite{ckns} and all succeeding works. We put emphasis on the fact that, first, efficiency~\eqref{eq:efficient} of $\eta_\ell$ is not needed and that, second, $R$-linear convergence~\eqref{eq:rconv} instead of plain contraction in each step of the adaptive loop is sufficient.
\begin{proof}[Proof of Theorem~\ref{thm:optimal}]
Let $\lambda>0$ denote a free parameter, which is fixed later on.
 The definition of the approximation class $\A_s$ allows for given $\eps^2:=\lambda \eta_\ell^2>0$ to choose a mesh $\TT_\eps\in\T$ such that
\begin{align*}
 \eta_\eps \leq \eps\quad\text{and}\quad \#\TT_\eps-\#\TT_0 \lesssim \norm{(u,f)}{\A_s}^{1/s} \eps^{-1/s}.
\end{align*}
Now, consider the overlay $\TT_\star:=\TT_\eps\oplus\TT_\ell$ and argue similarly to~\eqref{eq:stable} to see
\begin{align*}
 \eta_\star^2 \lesssim \eta_\eps^2 + \normLtwo{\nabla(U_\star-U_\eps)}{\Omega}^2 \lesssim \eta_\eps^2 \leq \lambda \eta_\ell^2,
\end{align*}
where we used the definition of $\eps>0$. We choose $\lambda>0$ sufficiently small such that Lemma~\ref{lem:doerfler} is applicable and conclude that
$\TT_\ell\setminus\TT_\star$ satisfies the D\"orfler marking~\eqref{eq:doerfler}. By definition of step~(iii) of Algorithm~\ref{algorithm}, the set $\MM_\ell$ of marked elements is a set of minimal cardinality which satisfies the D\"orfler marking. Therefore, we obtain by use of~\eqref{refinement:sons} and~\eqref{refinement:overlay}
\begin{align}\label{eq:marked}
\begin{split}
 \#\MM_\ell\leq \#(\TT_\star\setminus\TT_\ell)\leq \#\TT_\star-\#\TT_\ell\leq \#\TT_\eps - \#\TT_0 
&\lesssim \norm{(u,f)}{\A_s}^{1/s} \eps^{-1/s}\\
&\lesssim \norm{(u,f)}{\A_s}^{1/s}\eta_\ell^{-1/s}
\end{split}
\end{align}
for all $\ell\in\N$.
Finally, the closure estimate~\eqref{refinement:closure} and the contraction~\eqref{eq:rconv} of Proposition~\ref{thm:rconv} yield
\begin{align*}
 \#\TT_\ell-\#\TT_0\lesssim \sum_{j=0}^{\ell-1}\#\MM_\ell\lesssim \norm{(u,f)}{\A_s}^{1/s}\sum_{j=0}^{\ell-1}\eta_j^{-1/s}\lesssim \norm{(u,f)}{\A_s}^{1/s}\eta_\ell^{-1/s}\sum_{j=0}^{\ell-1}\q{rconv}^{(\ell-j)/s}.
\end{align*}
Exploiting the convergence of the geometric series, we end up with
\begin{align*}
 \eta_\ell^2\lesssim \norm{(u,f)}{\A_s} (\#\TT_\ell-\#\TT_0)^{-s}\quad\text{for all }\ell\in\N.
\end{align*}
Altogether, this proves that each theoretically possible convergence rate for the estimator is, in fact, asymptotically achieved by the adaptive algorithm. The converse implication in~\eqref{eq:optimality} is obvious. This concludes the proof.
\end{proof}

\begin{remark}
 We stress that the proof of Theorem~\ref{thm:optimal} depends only on properties~\eqref{refinement:shaperegular}--\eqref{refinement:overlay} of NVB, $R$-linear convergence~\eqref{eq:rconv} of the estimator used, and the discrete reliability~\eqref{eq:drel}. In particular, there is no explicit use of the properties of the differential operator $\operator{L}$, i.e.\ neither linearity nor uniform ellipticity is required.
\end{remark}

\section{Extensions}\label{section:extensions}
In this section, we want to discuss some possible extensions of our analysis.
\subsection{Minimal cardinality of marked elements}
The choice of the set of marked elements $\MM_\ell$ in step~(iii) of Algorithm~\ref{algorithm} to be a set of minimal cardinality which satisfies the D\"orfler marking~\eqref{eq:doerfler}, requires  to sort the set $\set{\eta_\ell(T)}{T\in\TT_\ell}$, which takes at least $\mathcal{O}\big(\#\TT_\ell\log(\#\TT_\ell)\big)$ operations. In comparison to $\mathcal{O}(\#\TT_\ell)$ operations for iterative solvers on sparse matrices, marking becomes the bottleneck of Algorithm~\ref{algorithm}. To overcome this problem, we may allow the set $\MM_\ell$ to be of \emph{almost} minimal cardinality in the sense of
\begin{align}\label{eq:almost}
\#\MM_\ell \leq \c{almost} \#\widetilde\MM_\ell\quad\text{for all }\ell\in\N,
\end{align}
where $\widetilde \MM_\ell $ is a set of minimal cardinality which satisfies D\"orfler marking and $\setc{almost}>0$ is an arbitrary but fixed constant. All the proofs hold true up to an the additional factor $\c{almost}$, which is involved in~\eqref{eq:marked}.  The relaxation~\eqref{eq:almost} allows to apply an inexact sorting algorithm based on binning of the data (see e.g.~\cite{ms}) which performs in $\mathcal{O}(\#\TT_\ell)$ operations.
\subsection{Other mesh-refinement strategies}
Instead of simple \emph{newest-vertex} bisection, one can consider other mesh-refinement strategies which satisfy~\eqref{refinement:sons}--\eqref{refinement:overlay}, since no other property of the mesh refinement strategy is used throughout this paper. In particular, one could use up to $m$ newest vertex bisections per marked element, where $m\in\N$ is a fixed number, cf.\ e.g.~\cite{ks}.
This includes the strategy proposed in~\cite{cn} which uses additional bisections every $n$-th step to ensure the interior node property and hence to obtain a discrete lower bound on the error. Moreover, one can relax the regularity of the triangulations used and allow a fixed number of hanging nodes in each triangle $T\in\TT_\ell$~\cite{bn}.
\subsection{Inhomogeneous Dirichlet data}
Let $\SS^p(\TT_\ell):=\PP^p(\TT_\ell)\cap H^1(\Omega)$ with discrete trace space $\SS^p(\TT_\ell|_\Gamma):=\set{V_\ell|_\Gamma}{V_\ell\in\SS^p(\TT_\ell)}$.
We consider inhomogeneous Dirichlet data $g\in H^{1/2}(\Gamma)$ and an $H^{1/2}$-stable projection $P_\ell:\, H^{1/2}(\Gamma)\to \SS^p(\TT_\ell|_\Gamma)$, for instance the Scott-Zhang projection~\cite{sz} for $p\geq 1$ or the $L^2$-projection for $p=1$ (see~\cite{kpp} for $H^1$-stability on NVB refined meshes). 
The continuous problem we want to solve, now reads: Find $u\in H^1(\Omega)$ with $u|_{\partial \Omega}=g$ such that
\begin{align}\label{eq:continuousinhom}
\dual{\operator{L}u}{v}=b(u,v)=\int_\Omega fv\,dx \quad\text{for all }v\in H^1_0(\Omega).
\end{align}
 The corresponding discrete formulation reads: Find $U_\ell \in\SS^p(\TT_\ell)$ with $U_\ell|_\Gamma=P_\ell g$ such that
\begin{align}\label{eq:discreteinhom}
b(U_\ell,V_\ell)=\int_\Omega f V_\ell\,dx \quad\text{for all }V_\ell\in\SS^p_0(\TT_\ell).
\end{align}
Well-posedness of~\eqref{eq:continuousinhom}--\eqref{eq:discreteinhom} is well-known and discussed, e.g., in~\cite{dirichlet3d,bcd,sv}.
The approximation error which is introduced via $g\approx P_\ell g$ results in an additional error quantity. We assume regularity $g\in H^1(\Gamma)$ and define the Dirichlet data oscillations
\begin{align*}
\osc_{g,\ell}:=\sum_{E\in\TT_\ell|_\Gamma} \text{diam}(E)\normLtwo{\nabla_{\Gamma}(1-P_\ell)g}{E}^2,
\end{align*}
where $\nabla_{\Gamma}(\,\cdot\,)$ denotes the surface gradient on $\Gamma=\partial\Omega$.

Since the ansatz spaces are no longer nested, i.e. $U_{\ell+1}-U_\ell\notin \SS^p_0(\TT_\ell)$, we have to rely on a modified marking strategy proposed in~\cite{stevenson07}. We replace the D\"orfler marking~\eqref{eq:doerfler}
by the following separate marking strategy with adaptivity parameters $0<\theta,\vartheta<1$:
\begin{itemize}
\item If $\osc_{g,\ell}^2\leq\vartheta \eta_\ell^2$, determine $\MM_\ell\subseteq \TT_\ell$ as a set of minimal cardinality which satisfies~\eqref{eq:doerfler}.
\item If $\osc_{g,\ell}^2>\vartheta \eta_\ell^2$, determine $\MM_\ell\subseteq \TT_\ell$ as a set of minimal cardinality which satisfies
\begin{align}\label{eq:sdoerfler2}
\theta \osc_{g,\ell}^2\leq \sum_{T\in\MM_\ell} \osc_{g,\ell}(T)^2.
\end{align}
\end{itemize}
Now, the analysis of~\cite{dirichlet3d} can easily be transfered to the present problem as well, where $\eta_\ell$ in~\eqref{eq:estred},~\eqref{eq:rconv}, and~\eqref{eq:approxclass}--\eqref{eq:optimality} is replaced by $\rho_\ell:=\eta_\ell+\osc_{g,\ell}$. For usual choices of $P_\ell$ as above, one obtains convergence of AFEM by means of the estimator reduction principle~\cite[Theorem~4]{dirichlet3d}. Moreover, for arbitrary $P_\ell$ and sufficiently small marking parameters $0<\vartheta,\theta<1$, we obtain the optimality result of Theorem~\ref{thm:optimal}, cf.~\cite[Theorem~6]{dirichlet3d}.

For $d=2$, one may even use nodal interpolation to discretize the inhomogeneous Dirichlet data. Then, the combined D\"orfler marking~\eqref{eq:doerfler} for $\rho_\ell:=\eta_\ell+\osc_{g,\ell}$ instead of $\eta_\ell$ yields the contraction result of Theorem~\ref{thm:rconv}. Moreover, for sufficiently small $0<\theta<1$, Theorem~\ref{thm:optimal} remains valid. We refer to~\cite{fpp} in case of symmetric $\operator{L}=-\Delta$ and stress that the analysis can easily be transfered to the present setting.

\subsection{Coercive but not uniformly elliptic bilinear forms}
Assume that instead of ellipticity~\eqref{eq:elliptic}, there holds a G\r{a}rding inequality
\begin{align}\label{eq:garding}
b(u,u)+\c{garding}\normLtwo{u}{\Omega}^2\geq \rr{garding}\normLtwo{\nabla u}{\Omega}^2\quad\text{for all }u \in H^1(\Omega)
\end{align}
with constants $0<\setrr{garding}<1$ and $\setc{garding}>0$
We have to assume that $b(\cdot,\cdot)$ is definite on the continuous level, i.e. for all $v\in H^1_0(\Omega)$
, it holds 
\begin{subequations}\label{eq:injective}
\begin{align}
 b(v,w)&=0\quad\text{for all } w\in H^1_0(\Omega) \quad\implies\quad v=0,\\
\label{eq:injectiveinfty} b(v_\infty,w_\infty)&=0\quad\text{for all } w_\infty\in \SS_0^p(\TT_\infty) \quad\implies\quad v_\infty=0.
\end{align}
\end{subequations}
This together with Fredholm's alternative already guarantees the unique solvability of~\eqref{eq:continuous} and~\eqref{eq:discrete} with test and ansatz space  $\SS^p_0(\TT_\infty)$ instead of $\SS^p_0(\TT_\ell)$. 

\begin{remark}
 Usually, the conditions~\eqref{eq:injective} are guaranteed under the assumption that the mesh-size of the initial mesh $\TT_0$ is sufficiently small and that the solution $w\in H^1_0(\Omega)$ of the dual problem
\begin{align*}
 b(v,w)=\int_\Omega f v\,dx\quad\text{for all }v\in H^1_0(\Omega)
\end{align*}
satisfies  some regularity estimate
\begin{align*}
 \norm{w}{H^{1+s}(\Omega)}\lesssim \norm{f}{L^2(\Omega)}\quad\text{for some }s>0,
\end{align*}
see e.g.~\cite[Theorem~5.7.6]{bs}.
\end{remark}

Now, we may apply~\cite[Theorem~4.2.9]{ss} to obtain the following result.
\begin{lemma}
 There exists an index $\ell_0\in\N$ such that for all $\ell\geq\ell_0$ the discrete formulation~\eqref{eq:discrete} is uniquely solvable, and it holds
\begin{align}\label{eq:ceagarding}
 \normLtwo{\nabla(u_\infty -U_\ell)}{\Omega}\leq \c{cea}\min_{V_\ell\in\SS^p_0(\TT_\ell)}\normLtwo{\nabla(u_\infty-V_\ell)}{\Omega},
\end{align}
where $u_\infty\in\SS^p_0(\TT_\infty)$ denotes the unique solution of~\eqref{eq:discrete} with $\SS^p_0(\TT_\infty)$ instead of $\SS^p_0(\TT_\ell)$.
\end{lemma}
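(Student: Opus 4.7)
The plan is a classical Schatz-type duality argument, transferring well-posedness of the Galerkin problem on the closed subspace $\SS^p_0(\TT_\infty)$ down to the finite-dimensional discretizations $\SS^p_0(\TT_\ell)$ for $\ell$ sufficiently large. Because $\SS^p_0(\TT_\ell)$ is finite-dimensional, it will suffice to prove the quasi-optimality~\eqref{eq:ceagarding} uniformly in $\ell \geq \ell_0$; specializing the estimate to homogeneous right-hand side forces the only discrete solution of the homogeneous problem to vanish, which then yields both uniqueness and, by rank--nullity, existence of $U_\ell$ for arbitrary data.

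First, I would combine the G\r{a}rding inequality~\eqref{eq:garding}, Galerkin orthogonality $b(u_\infty - U_\ell, V_\ell - U_\ell) = 0$, and continuity~\eqref{eq:continuous} to obtain, for every $V_\ell \in \SS^p_0(\TT_\ell)$,
\begin{align*}
\rr{garding}\normLtwo{\nabla(u_\infty - U_\ell)}{\Omega}^2
\leq \c{continuous}\normLtwo{\nabla(u_\infty - U_\ell)}{\Omega}\normLtwo{\nabla(u_\infty - V_\ell)}{\Omega}
+ \c{garding}\normLtwo{u_\infty - U_\ell}{\Omega}^2.
\end{align*}
Next, the undesired $L^2$-term is to be controlled by an Aubin--Nitsche duality step: let $z \in \SS^p_0(\TT_\infty)$ solve the adjoint problem $b(v_\infty, z) = \int_\Omega (u_\infty - U_\ell)\, v_\infty \, dx$ for all $v_\infty \in \SS^p_0(\TT_\infty)$. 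Its unique solvability together with the continuous dependence $\normLtwo{\nabla z}{\Omega} \lesssim \normLtwo{u_\infty - U_\ell}{\Omega}$ follows from Fredholm's alternative applied to the G\r{a}rding-elliptic form on the closed subspace $\SS^p_0(\TT_\infty)$, using~\eqref{eq:injectiveinfty} and the compactness of $\operator{K}$ from Lemma~\ref{lem:compact}. Galerkin orthogonality and continuity then give, for any $W_\ell \in \SS^p_0(\TT_\ell)$,
\begin{align*}
\normLtwo{u_\infty - U_\ell}{\Omega}^2
= b(u_\infty - U_\ell, z - W_\ell)
\leq \c{continuous}\normLtwo{\nabla(u_\infty - U_\ell)}{\Omega}\normLtwo{\nabla(z - W_\ell)}{\Omega},
\end{align*}
which yields $\normLtwo{u_\infty - U_\ell}{\Omega} \leq \c{continuous}\,\kappa(\ell)\,\normLtwo{\nabla(u_\infty - U_\ell)}{\Omega}$, where $\kappa(\ell)$ denotes the worst-case best-approximation error of $\SS^p_0(\TT_\ell)$ on the image of the $L^2$-unit ball under the adjoint solution operator $\phi \mapsto z_\phi$.

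The decisive step, and the main technical obstacle, is to prove $\kappa(\ell) \to 0$ as $\ell \to \infty$. Viewed as a map $L^2(\Omega) \to H^1_0(\Omega)$, the adjoint solution operator is compact, since it factors through the compact embedding $L^2(\Omega) \hookrightarrow H^{-1}(\Omega)$ (the very structure used in Lemma~\ref{lem:compact}) composed with a bounded inverse. Hence the image of the $L^2$-unit ball is relatively compact in $H^1_0(\Omega)$ and sits inside $\SS^p_0(\TT_\infty) = \overline{\bigcup_\ell \SS^p_0(\TT_\ell)}$; a standard finite $\eps$-cover argument then upgrades the merely pointwise density of the nested spaces to uniform decay of best approximation over this compact set, so $\kappa(\ell) \to 0$. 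Choosing $\ell_0$ so large that $\c{garding}\c{continuous}^2\kappa(\ell)^2 \leq \rr{garding}/2$ for all $\ell \geq \ell_0$ allows me to absorb the $L^2$-term in the first inequality, producing~\eqref{eq:ceagarding} with $\c{cea} := 2\c{continuous}/\rr{garding}$. The hard part is precisely this uniform convergence: without invoking the compactness of the adjoint solution operator, the density $\SS^p_0(\TT_\infty) = \overline{\bigcup_\ell \SS^p_0(\TT_\ell)}$ would give only pointwise convergence of best approximations, and the compact perturbation structure $\operator{L} = \operator{A} + \operator{K}$ is exactly what lets one upgrade this to the uniform estimate needed to close the argument.
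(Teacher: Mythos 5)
Your proof is correct and unpacks in detail exactly the argument the paper delegates to \cite[Theorem~4.2.9]{ss}: a Schatz-type duality estimate on the Hilbert space $\SS^p_0(\TT_\infty)$ with the nested dense subspaces $\SS^p_0(\TT_\ell)$, in which the G\r{a}rding inequality combined with compactness of the adjoint solution operator lets the $L^2$-perturbation be absorbed for $\ell$ large. The uniform decay $\kappa(\ell)\to 0$ that you obtain from compactness and an $\eps$-net argument is precisely the non-trivial step hidden behind the paper's one-line citation.
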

\begin{proof}
Since~\eqref{eq:garding} states that $b(u,v)+ \c{garding}\dual{u}{v}_{L^2(\Omega)}$ is elliptic and $\dual{\cdot}{\cdot}_{L^2(\Omega)}$ is a compact perturbation, we apply~\cite[Theorem~4.2.9]{ss} on the Hilbert space $\SS^p_0(\TT_\infty)$ and the dense sequence of subspaces $\SS^p_0(\TT_\ell)$ for $\ell\to\infty$.
\end{proof}

The above lemma allows to prove a~priori convergence from Lemma~\ref{lem:apriori} and consequently convergence $U_\ell\to u$ in $H^1_0(\Omega)$ as well as $u\in\SS^p_0(\TT_\infty)$. Moreover, Lemma~\ref{lem:weakconv} still holds true, since we assumed definiteness of $b(\cdot,\cdot)$ on $\SS^p_0(\TT_\infty)$ in~\eqref{eq:injectiveinfty}.
\begin{lemma}\label{lem:ellipticgarding}
 There exists an index $\ell_1\in\N$ such that for all $\ell\geq\ell_1$ there holds
\begin{align*}
 \normLtwo{\nabla(u-U_\ell)}{\Omega}\leq \c{norm}\enorm{u-U_\ell} \quad\text{and}\quad\normLtwo{\nabla(U_{\ell+1}-U_\ell)}{\Omega}\leq \c{norm}\enorm{U_{\ell+1}-U_\ell}. 
\end{align*}
\end{lemma}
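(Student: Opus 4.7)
The plan is to combine the G\r{a}rding inequality~\eqref{eq:garding} with a compactness argument which forces the $L^2$-part of each defect to become asymptotically negligible compared with the $H^1$-seminorm. Applied to a generic $v\in H^1_0(\Omega)$, the G\r{a}rding estimate reads
\begin{align*}
b(v,v)\geq\rr{garding}\normLtwo{\nabla v}{\Omega}^2-\c{garding}\normLtwo{v}{\Omega}^2.
\end{align*}
Hence, for the concrete choices $v=u-U_\ell$ and $v=U_{\ell+1}-U_\ell$, it suffices to prove that $\normLtwo{v}{\Omega}=o(\normLtwo{\nabla v}{\Omega})$ as $\ell\to\infty$: once $\c{garding}\normLtwo{v}{\Omega}^2\leq(\rr{garding}/2)\normLtwo{\nabla v}{\Omega}^2$, absorption yields $\normLtwo{\nabla v}{\Omega}^2\leq(2/\rr{garding})\,b(v,v)$, which in particular guarantees $b(v,v)\geq 0$, so that $\enorm{v}=b(v,v)^{1/2}$ is well defined and the bound $\normLtwo{\nabla v}{\Omega}\leq\c{norm}\enorm{v}$ holds with $\c{norm}=(2/\rr{garding})^{1/2}$.

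The decisive smallness of the quotient $\normLtwo{v}{\Omega}/\normLtwo{\nabla v}{\Omega}$ comes from Lemma~\ref{lem:weakconv} combined with Rellich compactness. I would first verify that Lemma~\ref{lem:weakconv} carries over to the G\r{a}rding setting: its proof uses only boundedness of the normalized sequences, Galerkin orthogonality, the inclusion $u,U_\ell\in\SS^p_0(\TT_\infty)$ (which still holds via~\eqref{eq:ceagarding} and the a~priori convergence argument of Lemma~\ref{lem:apriori}), and definiteness of $b(\cdot,\cdot)$ on $\SS^p_0(\TT_\infty)$, the latter being precisely assumption~\eqref{eq:injectiveinfty}. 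Consequently, the normalized sequences $e_\ell$ and $E_\ell$ of Lemma~\ref{lem:weakconv} still satisfy $e_\ell,E_\ell\rightharpoonup 0$ in $H^1_0(\Omega)$. Since the embedding $H^1_0(\Omega)\hookrightarrow L^2(\Omega)$ is compact, this upgrades to strong convergence $\normLtwo{e_\ell}{\Omega}\to 0$ and $\normLtwo{E_\ell}{\Omega}\to 0$, i.e.\ exactly the quantitative statement
\begin{align*}
\frac{\normLtwo{u-U_\ell}{\Omega}}{\normLtwo{\nabla(u-U_\ell)}{\Omega}}\to 0\quad\text{and}\quad\frac{\normLtwo{U_{\ell+1}-U_\ell}{\Omega}}{\normLtwo{\nabla(U_{\ell+1}-U_\ell)}{\Omega}}\to 0.
\end{align*}
Choosing $\ell_1\in\N$ so large that both quotients lie below $\bigl(\rr{garding}/(2\c{garding})\bigr)^{1/2}$ for $\ell\geq\ell_1$, the absorption step from the previous paragraph applies to both defects simultaneously and delivers the two asserted inequalities.

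The main obstacle is the adaptation of Lemma~\ref{lem:weakconv} to the G\r{a}rding framework; the original proof relied on definiteness of $b(\cdot,\cdot)$ on $\SS^p_0(\TT_\infty)$, and the replacement here is supplied by hypothesis~\eqref{eq:injectiveinfty}, which ensures that any weak subsequential limit $w\in\SS^p_0(\TT_\infty)$ with $b(w,V_\ell)=0$ for every $V_\ell$ in the dense union $\bigcup_\ell\SS^p_0(\TT_\ell)$ must vanish. Once this is secured, the remainder of the argument consists only of the Rellich compactness step and elementary absorption into the G\r{a}rding inequality.
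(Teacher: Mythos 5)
Your proposal is correct and follows essentially the same route as the paper: both invoke the G\r{a}rding inequality, the weak convergence $e_\ell,E_\ell\rightharpoonup 0$ from Lemma~\ref{lem:weakconv} (whose validity in the G\r{a}rding setting the paper has already secured via~\eqref{eq:injectiveinfty} and the C\'ea estimate~\eqref{eq:ceagarding}), Rellich compactness to upgrade to strong $L^2$-convergence of the normalized defects, and then absorption of the $L^2$-term into the $H^1$-seminorm term for $\ell$ large. Your extra remarks -- the explicit constant $(2/\rr{garding})^{1/2}$ and the observation that the absorption step also guarantees $b(v,v)\geq 0$ so that $\enorm{v}$ is well defined -- are harmless refinements and do not change the argument.
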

\begin{proof}
With~\eqref{eq:garding} and $b(\cdot,\cdot)=\enorm{\cdot}^2$, we may estimate
\begin{align*}
  \normLtwo{\nabla(u-U_{\ell})}{\Omega}^2&\lesssim \enorm{u-U_\ell}^2 +\normLtwo{u-U_{\ell}}{\Omega}^2\\
&=\enorm{u-U_\ell}^2 +\normLtwo{e_\ell}{\Omega}^2\normLtwo{\nabla(u-U_\ell)}{\Omega}^2.
\end{align*}
Lemma~\ref{lem:weakconv} shows weak convergence $e_\ell\rightharpoonup 0$ in $H^1_0(\Omega)$. The Rellich compactness theorem thus implies strong convergence $e_\ell\to 0$ in $L^2(\Omega)$. Therefore, there exists an index $\ell_1\in\N$ such that
 there holds
\begin{align*}
  \normLtwo{\nabla(u-U_{\ell})}{\Omega}^2\lesssim \enorm{u-U_\ell}^2\quad\text{for all }\ell\geq\ell_1.
\end{align*}
The statement for $U_{\ell+1}-U_\ell$ follows analogously.
\end{proof}
Lemma~\ref{lem:weakconv} together with Lemma~\ref{lem:ellipticgarding} allows to prove the quasi-Galerkin orthogonality of Proposition~\ref{prop:quasiqo} and consequently also the $R$-linear convergence of Theorem~\ref{thm:rconv}. Therefore, all the results from Section~\ref{section:optimality} hold and, in particular, we obtain the optimality result of Theorem~\ref{thm:optimal}.

\subsection{Non-linear operators $\operator{L}$}\label{section:nonlin}
We consider the following \emph{non-linear} operator
\begin{align*}
 \operator{L}u(x):= -\text{div} \matrix{A}(x,\nabla u(x)) + g(x,u(x),\nabla u(x)),
\end{align*}
for functions $\matrix{A}:\,\Omega\times\R^d \to \R^d$ and $g:\, \Omega\times\R\times \R^d \to \R$. We assume that $\matrix{A}(\cdot,\nabla u),g(\cdot,u,\nabla u)\in L^2(\Omega)$ for all $u\in H^1_0(\Omega)$. Then, the weak formulation of~\eqref{intro:modelproblem} reads: Find $u\in H^1_0(\Omega)$ such that
\begin{align}\label{eq:continuousnonlin}
 \dual{\operator{L}u}{v}=\int_\Omega \matrix{A}(x,\nabla u(x))\cdot\nabla v(x) + g(x,u(x),\nabla u(x)) v(x)\,dx = \int_\Omega fv\,dx
\end{align}
for all $v\in H^1_0(\Omega)$. We define two auxiliary operators $\operator{A},\operator{K}:\,H^1_0(\Omega)\to H^{-1}(\Omega)$ as
\begin{align*}
 \operator{A}v:=-\text{div}\matrix{A}(\cdot,\nabla v)\quad\text{and}\quad\operator{K}v:=g(\cdot,v,\nabla v)\quad\text{for all }v\in H^1_0(\Omega).
\end{align*}
We formally define the residual error estimator for a mesh $\TT_\ell$
\begin{align}\label{eq:nlest}
 \eta_\ell^2:= \sum_{T\in\TT_\ell}\big(|T|^{2/d}\normLtwo{\operator{L}|_TU_\ell - f}{T}^2 + |T|^{1/d} \normLtwo{[\matrix{A}(\cdot,\nabla U_\ell)\cdot n]}{\partial T\cap\Omega}^2\big).
\end{align}
The solvability and uniqueness of~\eqref{eq:continuousnonlin} as well as the regularity assumptions needed such that~\eqref{eq:nlest} is well-defined are part of the subsequent sections.
\subsubsection{Regularity assumptions}\label{section:nlreg}
 We consider the frame of strongly monotone operators and require the following regularity assumptions on $\operator{L}$:
\begin{subequations}\label{eq:smon1}
\begin{align}\label{eq:smon1a}
\norm{\operator{A}\nabla w-\operator{A}\nabla v}{H^{-1}(\Omega)}&\leq \c{nllip}\normLtwo{\nabla(w-v)}{\Omega},\\
\normLtwo{\operator{K}w-\operator{K}v}{\Omega}&\leq \c{nllip}\normLtwo{\nabla(w-v)}{\Omega}\label{eq:smon1b}
\end{align}
\end{subequations}
for all $w,v\in H^1_0(\Omega)$ and some constant $\setc{nllip}>0$ as well as
\begin{align}\label{eq:smon2}
 \dual{\operator{L}w-\operator{L}v}{u-v}\geq \c{nlelliptic} \normLtwo{\nabla(w-v)}{\Omega}^2
\end{align}
for all $w,v\in H^1_0(\Omega)$ and some constant $\setc{nlelliptic}>0$. 
These assumptions, in particular, allow to apply the main theorem on strongly monotone operators~\cite[Theorem~26.A]{zeidler} and to obtain the unique solvability of~\eqref{eq:continuousnonlin} as well as of~\eqref{eq:discrete}. Additionally,~\eqref{eq:smon1}--\eqref{eq:smon2} guarantee that the norms of the residual and the error are equivalent, i.e. \begin{align}\label{eq:resequiv}
\normHme{\operator{L}u-\operator{L}U_\ell}{\Omega}\simeq \normLtwo{\nabla(u-U_\ell)}{\Omega}\quad\text{for all }\ell\in\N.
\end{align}
We also obtain the C\'ea lemma~\eqref{eq:cea} with the constant $2\c{nllip}/\c{nlelliptic}$.

Moreover, we require that~\eqref{eq:nlest} is well-defined and that there holds the estimator reduction~\eqref{eq:estred} from Lemma~\ref{lem:estred}. For possible non-linearities $\matrix{A}$ which allow for~\eqref{eq:estred}, we refer to Lemma~\ref{lem:nlwell} below.

We assume that $\operator{L}:\, H_0^1(\Omega)\to H^{-1}(\Omega)$ as well as $\operator{A}:\, H_0^1(\Omega)\to H^{-1}(\Omega)$ are twice Fr\'echet differentiable, i.e. there exist
\begin{align}\label{eq:frechet}
 \begin{split}
  D\operator{L},D\operator{A}:&\, H^1_0(\Omega)\to L(H^1_0(\Omega),H^{-1}(\Omega)),\\
  D^2\operator{L},D^2\operator{A}:&\, H^1_0(\Omega)\to L\big(H^1_0(\Omega),L(H^1_0(\Omega),H^{-1}(\Omega))\big).
 \end{split}
\end{align}
The second derivative should be bounded locally around the solution $u$ of~\eqref{eq:continuousnonlin} i.e., there exists $\eps_{\ell oc}>0$ with \definec{nlbound}
\begin{align}\label{eq:nlbounded}
\begin{split}
 \c{nlbound}:=\sup_{\normLtwo{\nabla(u-v)}{\Omega}<\eps_{\ell oc}}\Big(\norm{&D^2\operator{L}(v)}{L\big(H^1_0(\Omega),L(H^1_0(\Omega),H^{-1}(\Omega))\big)}\\
&+\norm{D^2\operator{A}(v)}{L\big(H^1_0(\Omega),L(H^1_0(\Omega),H^{-1}(\Omega))\big)}\Big)<\infty.
\end{split}
\end{align}
Finally, we assume that $D\operator{A}(v):\,H^1_0(\Omega)\to H^{-1}(\Omega)$ is symmetric for all $v\in H^1_0(\Omega)$, i.e.\ for all $w_1,w_2\in H^1_0(\Omega)$ holds
\begin{align*}
\dual{D\operator{A}(v)(w_1)}{w_2}=\dual{D\operator{A}(v)(w_2)}{w_1}.
\end{align*}

\begin{remark}
Note that if $\matrix{A}:\,\Omega\times\R^d \to \R^d$ and $g:\, \Omega\times\R\times \R^d \to \R$ are twice differentiable, and if the Jacobian
$J_y\matrix{A}(x,y)\in \R^{d\times d}$ additionally is a symmetric matrix, then $\operator{L}$ and $\operator{A}$ satisfy~\eqref{eq:frechet} as well as~\eqref{eq:nlbounded}. Moreover, $D\operator{A}(v)$ is symmetric for all $v\in H^1_0(\Omega)$, since there holds for $w\in H^1_0(\Omega)$
\begin{align*}
 D\operator{A}(v)(w)={\rm div}\Big(\big(J_y\matrix{A}(\cdot,\nabla v(\cdot))\big) \big(\nabla w(\cdot)\big)\Big),
\end{align*}
where $J_y\matrix{A}(x,y)$ denotes the Jacobian of $\matrix{A}$ with respect to $y$.
\end{remark}

\begin{example}
 We stress that the assumptions on $\operator{A}$ and $\operator{L}$ posed, cover  for instance non-linear material laws in magnetostatics, where e.g.\ $\matrix{A}(\cdot,\cdot)$ takes the form 
\begin{align*}
\matrix{A}(x,\nabla u(x)) = \Big(1+\frac{1}{1+|\nabla u(x)|^2}\Big)\nabla u(x).
\end{align*}
E.g.\ for $d=2$, the Jacobi-matrix $J_y\matrix{A}(x,y)$ reads as
\begin{align*}
 J_y\matrix{A}(x,y):=\left(\begin{array}{cc}
                                 \frac{-2y_1^2}{(1+|y|^2)^2} & \frac{-2y_1y_2}{(1+|y|^2)^2}\\
				  \frac{-2y_1y_2}{(1+|y|^2)^2} &\frac{-2y_2^2}{(1+|y|^2)^2}
                                \end{array}\right)
			      +
				 \left(\begin{array}{cc}
                                 1+\frac{1}{1+|y|^2} & 0\\
				  0 &1+\frac{1}{1+|y|^2}
                                \end{array}\right).
\end{align*}
We refer to e.g.~\cite{rzmp} for further examples.
\end{example}

\begin{lemma}\label{lem:nlwell}
Sufficient regularity assumptions in addition to~\eqref{eq:smon1b} and~\eqref{eq:smon2} to guarantee that the error estimator~\eqref{eq:nlest} is well-defined and satisfies the estimator reduction~\eqref{eq:estred} are, for instance, either of the following conditions $(i)$ and $(ii)$:
\begin{itemize}
 \item[(i)] $\matrix{A}(\cdot,\cdot):\,\Omega\times\R^d\to\R^d$ is Lipschitz continuous and there exists a constant $\setc{nlwell1}>0$ such that for all $\ell\in\N$ and all $V_\ell,W_\ell\in\SS^p_0(\TT_\ell)$ there holds ${\rm div}\matrix{A}(\cdot, V_\ell(\cdot))\in L^2(\Omega)$ as well as
 \begin{align}\label{eq:nlwell1}
 \normLtwo{{\rm div}|_T\big(\matrix{A}(\cdot,V_\ell(\cdot))-\matrix{A}(\cdot, W_\ell(\cdot))\big)}{T}\leq\c{nlwell1}\norm{V_\ell-W_\ell}{H^2(T)}\quad\text{for all }T\in\TT_\ell.
 \end{align}
\item[(ii)] There holds $p=1$ (lowest-order case) as well as
\begin{align*}
 \matrix{A}(x,y)=\matrix{A}(y)\quad\text{for all }x\in\Omega,\,y\in \R^d,
\end{align*}
and additionally  $\matrix{A}(\cdot):\,\R^d\to\R^d$ is Lipschitz continuous.
\end{itemize}
\end{lemma}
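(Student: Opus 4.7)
The plan is to adapt the proof of Lemma~\ref{lem:estred} to the nonlinear setting in each of the two regularity regimes. Since Lemma~\ref{lem:estred} only used Young's inequality applied to a triangle-inequality splitting, a local inverse/Poincar\'e estimate bounding the ``increment'' in terms of $\normLtwo{\nabla(U_{\ell+1}-U_\ell)}{\Omega}$, and the Young-type volume vs.~jump scaling on $\TT_{\ell+1}$, the task reduces to showing that the differences
$\matrix{A}(\cdot,\nabla U_{\ell+1})-\matrix{A}(\cdot,\nabla U_\ell)$ and
$g(\cdot,U_{\ell+1},\nabla U_{\ell+1})-g(\cdot,U_\ell,\nabla U_\ell)$ admit the same local $L^2$-bounds on $T'\in\TT_{\ell+1}$ as their linear counterparts. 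At the same time we must check well-definedness, i.e.\ that $\operator{L}|_{T'}U_\ell\in L^2(T')$ and $[\matrix{A}(\cdot,\nabla U_\ell)\cdot n]\in L^2(\partial T'\cap\Omega)$; the $g$-contribution is covered by~\eqref{eq:smon1b} applied with $v=0$ together with the standing assumption $g(\cdot,0,0)\in L^2(\Omega)$.

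For case~(i), well-definedness of the volume term follows from the explicit hypothesis ${\rm div}\,\matrix{A}(\cdot,V_\ell)\in L^2(\Omega)$, and Lipschitz continuity of $\matrix{A}$ together with the smoothness of $\nabla U_\ell$ on each $T$ give $\matrix{A}(\cdot,\nabla U_\ell)|_T\in W^{1,\infty}(T)$, whose trace lies in $L^\infty(\partial T)\subset L^2(\partial T\cap\Omega)$. To reproduce the stability step~\eqref{eq:stable}, I would plug $V_\ell=U_{\ell+1}$ and $W_\ell=U_\ell$ (viewed on the common refinement $\TT_{\ell+1}$) into~\eqref{eq:nlwell1} and combine with the standard inverse estimate $\norm{V}{H^2(T')}\lesssim |T'|^{-1/d}\normLtwo{\nabla V}{T'}$ valid on piecewise polynomials, obtaining
\begin{align*}
|T'|^{2/d}\normLtwo{{\rm div}(\matrix{A}(\cdot,\nabla U_{\ell+1})-\matrix{A}(\cdot,\nabla U_\ell))}{T'}^2\lesssim \normLtwo{\nabla(U_{\ell+1}-U_\ell)}{T'}^2 .
\end{align*}
The pointwise Lipschitz bound on $\matrix{A}$ controls the jump contribution via a scaled trace inequality, and~\eqref{eq:smon1b} handles the $g$-part globally. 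Summing these estimates over $T'\in\TT_{\ell+1}$ recovers exactly the term $(1+\delta^{-1})\c{inv}\normLtwo{\nabla(U_{\ell+1}-U_\ell)}{\Omega}^2$, and the rest of the argument in Lemma~\ref{lem:estred} (splitting into $\TT_\ell\cap\TT_{\ell+1}$ vs.\ $\TT_\ell\setminus\TT_{\ell+1}$, the size reduction $|T'|\le|T|/2$, and D\"orfler marking) goes through unchanged.

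For case~(ii), the simplification is that on every $T'\in\TT_{\ell+1}$ the function $\nabla U_\ell$ is constant in $x$, hence $\matrix{A}(\nabla U_\ell)$ is constant on $T'$ and $\operator{A}|_{T'}U_\ell = -{\rm div}\,\matrix{A}(\nabla U_\ell)\equiv 0$, which makes the volume divergence term vanish identically and also settles well-definedness. Only the jump and the $g$-contribution remain, and both are treated exactly as in case~(i): the former via Lipschitz continuity of $\matrix{A}$ applied to the piecewise constant gradients together with a scaled trace/inverse inequality, the latter via~\eqref{eq:smon1b}. The residual estimator reduction~\eqref{eq:estred} then follows by literally the same Young/D\"orfler calculation as before, with $\c{inv}$ now depending only on the Lipschitz constants of $\matrix{A}$ and $g$, $\gamma$-shape regularity and $\Omega$.

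The main technical obstacle is the volume-term increment in case~(i): in the linear analysis one simply bounded $\normLtwo{\operator{L}|_{T'}(U_{\ell+1}-U_\ell)}{T'}$ by an inverse estimate against the coefficient norms, but nonlinearity forces one to replace this by the triangle-inequality splitting $\matrix{A}(\cdot,\nabla U_{\ell+1})-\matrix{A}(\cdot,\nabla U_\ell)$ and to rely on the structural assumption~\eqref{eq:nlwell1} to turn a pointwise Lipschitz statement into a statement about $L^2$-norms of divergences. This is precisely why~\eqref{eq:nlwell1} has been postulated in the form given, and it is the single point at which the nonlinear proof is not a verbatim copy of the linear one.
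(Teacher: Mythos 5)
Your argument follows essentially the same approach as the paper's proof: well-definedness of~\eqref{eq:nlest} via piecewise Lipschitz regularity, a pointwise Lipschitz bound for the conormal jump combined with a scaled trace inequality, the volume residual in case~$(i)$ controlled by~\eqref{eq:nlwell1} together with an inverse estimate, in case~$(ii)$ by the vanishing of the divergence on piecewise constants, and~\eqref{eq:smon1b} covering the $g$-part throughout, after which the Young/D\"orfler computation of Lemma~\ref{lem:estred} goes through unchanged. The only minor imprecision is the displayed local inverse estimate $\norm{V}{H^2(T')}\lesssim |T'|^{-1/d}\normLtwo{\nabla V}{T'}$, which as written fails for the lower-order part of the $H^2$-norm (take $V$ constant on $T'$); it holds only for the $H^2$-seminorm, and the $L^2$-contribution must be absorbed after summing over $T'\in\TT_{\ell+1}$ via the Poincar\'e inequality on $\Omega$, exactly as the paper does tacitly in the proofs of Lemma~\ref{lem:estred} and Lemma~\ref{lem:totalerror}.
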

\begin{proof}
The jump terms in~\eqref{eq:nlest} are well-defined in both cases $(i)$ and $(ii)$ since $\matrix{A}(\cdot,\nabla U_\ell(\cdot))$ is a piecewise Lipschitz continuous function. Moreover, this shows that ${\rm div}\matrix{A}(\cdot,\nabla U_\ell(\cdot))\in L^\infty(T)\subset L^2(T)$ for all $T\in\TT_\ell$. Therefore,~\eqref{eq:nlest} is well-defined.

Given $T_+,T_-\in\TT_\ell$ as well as $W_\ell,V_\ell\in\SS^p_0(\TT_\ell)$, the Lipschitz continuity also proves the following pointwise estimate for all $x\in T_+\cap T_-$
\begin{align*}
 |[(\matrix{A}(x,\nabla W_\ell(x))&-\matrix{A}(x,\nabla V_\ell(x)))\cdot n]|\\
 &\lesssim \Big|\big(\matrix{A}(x,(\nabla W_\ell)|_{T_+}(x))-\matrix{A}(x,(\nabla V_\ell)|_{T_+}(x))\big)\cdot n|_{T_+}\\
 &\qquad+
 \big(\matrix{A}(x,(\nabla W_\ell)|_{T_-}(x))-\matrix{A}(x,(\nabla V_\ell)|_{T_-}(x))\big)\cdot n|_{T_-}\Big|\\
 &\lesssim \Big|(\nabla W_\ell)|_{T_+}(x)-(\nabla V_\ell(x))|_{T_+} \Big|+ \Big|(\nabla W_\ell)|_{T_-}(x)-(\nabla V_\ell)|_{T_-}(x) \Big|.
\end{align*}
Combining the estimate above with the trace inequality for polynomials, we obtain
\begin{align}\label{eq:nlwellhelp1}
 |T_+|^{1/d} \normLtwo{[(\matrix{A}(\cdot,\nabla W_\ell)-\matrix{A}(\cdot,\nabla V_\ell))\cdot n]}{T_+\cap T_-}^2\lesssim 
\normLtwo{\nabla(W_\ell-V_\ell)}{T_+\cup T_-}^2.
\end{align}
This hidden constant depends only on the polynomial degree $p\in\N$ as well as the Lipschitz continuity of $\matrix{A}(\cdot,\cdot)$ and the $\gamma$-shape regularity of $\TT_\ell$.
It remains to prove a similar estimate for the volume residual in~\eqref{eq:nlest}, i.e.
\begin{align}\label{eq:nlwellhelp2}
 |T|^{2/d}\normLtwo{\operator{L}|_TW_\ell - \operator{L}|_T V_\ell}{T}^2\lesssim \normLtwo{\nabla(W_\ell-V_\ell)}{T}^2\quad\text{for all }T\in\TT_\ell.
\end{align}
In case of $(i)$, this follows immediately from the combination of~\eqref{eq:nlwell1} and~\eqref{eq:smon1b} together with a standard inverse estimate.
In case of $(ii)$, we observe that $\nabla U_\ell$ is piecewise constant. Therefore, $\matrix{A}(\nabla U_\ell)$ is also piecewise constant and hence $\operator{A}(\nabla U)={\rm div}\matrix{A}(\nabla U(\cdot)) = 0$. Thus, $\operator{L}|_TV_\ell=(\operator{K}V_\ell)|_T$, and it suffices to apply~\eqref{eq:smon1b} to prove~\eqref{eq:nlwellhelp2}. 
With the estimates~\eqref{eq:nlwellhelp1}--\eqref{eq:nlwellhelp2}, the proof of Lemma~\ref{lem:estred} still holds true with the obvious modifications. This concludes the proof.
\end{proof}

\subsubsection{Auxiliary results}
This section provides some technical lemmata, which are used to transfer the results from the linear case to the present non-linear case.
\begin{lemma}\label{lem:nlrel}
The residual error estimator satisfies reliability~\eqref{eq:reliable} as well as discrete reliability~\eqref{eq:drel}.
Moreover, there holds convergence
\begin{align}\label{eq:nlconv}
 \normLtwo{\nabla(u-U_\ell)}{\Omega}\to 0\quad\text{as }\ell\to\infty.
\end{align}
\end{lemma}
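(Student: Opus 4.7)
The plan is to establish the three assertions in turn, using as the main driver the norm-equivalence~\eqref{eq:resequiv} between residual and error (for reliability) and the strong monotonicity~\eqref{eq:smon2} together with Galerkin orthogonality on the refined space (for discrete reliability). Convergence~\eqref{eq:nlconv} will then follow by reassembling the linear-case argument of Proposition~\ref{prop:conv}.

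\textbf{Reliability.} By~\eqref{eq:resequiv} it suffices to bound $\normHme{\operator{L}u-\operator{L}U_\ell}{\Omega}\lesssim \eta_\ell$. Pick an arbitrary $v\in H^1_0(\Omega)$ and let $V_\ell\in\SS^p_0(\TT_\ell)$ be its Scott-Zhang quasi-interpolant, which is locally $H^1$-stable and satisfies the usual first-order approximation estimates $\normLtwo{v-V_\ell}{T}\lesssim |T|^{1/d}\normLtwo{\nabla v}{\omega_T}$ and $\normLtwo{v-V_\ell}{\partial T}\lesssim |T|^{1/(2d)}\normLtwo{\nabla v}{\omega_T}$. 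Since $u$ solves~\eqref{eq:continuousnonlin} and $U_\ell$ solves the discretization tested with $V_\ell$, we have
\begin{align*}
\dual{\operator{L}u-\operator{L}U_\ell}{v}
=\dual{f-\operator{L}U_\ell}{v-V_\ell}.
\end{align*}
Elementwise integration by parts (allowed since ${\rm div}\,\matrix{A}(\cdot,\nabla U_\ell)\in L^2(T)$ by the assumptions of Section~\ref{section:nlreg}) splits the right-hand side into volume residuals $(\operator{L}|_T U_\ell-f)$ and jumps of $\matrix{A}(\cdot,\nabla U_\ell)\cdot n$; Cauchy-Schwarz together with the Scott-Zhang estimates then gives $|\dual{\operator{L}u-\operator{L}U_\ell}{v}|\lesssim \eta_\ell\normLtwo{\nabla v}{\Omega}$, and taking the supremum over $v$ yields reliability.

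\textbf{Discrete reliability.} Using monotonicity~\eqref{eq:smon2}, write
\begin{align*}
\c{nlelliptic}\normLtwo{\nabla(U_\star-U_\ell)}{\Omega}^2
\leq \dual{\operator{L}U_\star-\operator{L}U_\ell}{U_\star-U_\ell}.
\end{align*}
Let $V_\ell\in\SS^p_0(\TT_\ell)$ be a Scott-Zhang quasi-interpolant of $U_\star-U_\ell$ which is constructed, as in~\cite{ckns}, so as to preserve degrees of freedom belonging exclusively to elements in $\TT_\ell\cap\TT_\star$; in particular $V_\ell=U_\star-U_\ell$ on the patch of every such element, and the interpolation error $(U_\star-U_\ell)-V_\ell$ is supported in a neighbourhood of $\TT_\ell\setminus\TT_\star$. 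Because $V_\ell\in\SS^p_0(\TT_\ell)\subseteq\SS^p_0(\TT_\star)$ is admissible in both discrete formulations, we obtain the Galerkin orthogonality $\dual{\operator{L}U_\star-\operator{L}U_\ell}{V_\ell}=0$, and consequently
\begin{align*}
\dual{\operator{L}U_\star-\operator{L}U_\ell}{U_\star-U_\ell}
=\dual{\operator{L}U_\star-\operator{L}U_\ell}{(U_\star-U_\ell)-V_\ell}
=\dual{f-\operator{L}U_\ell}{(U_\star-U_\ell)-V_\ell},
\end{align*}
where in the last equality we used that $(U_\star-U_\ell)-V_\ell\in\SS^p_0(\TT_\star)$ is a legal test function in the $\star$-discretization. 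Elementwise integration by parts as in the reliability step, combined with Scott-Zhang approximation and the fact that $(U_\star-U_\ell)-V_\ell$ is supported near $\TT_\ell\setminus\TT_\star$, gives
\begin{align*}
|\dual{f-\operator{L}U_\ell}{(U_\star-U_\ell)-V_\ell}|\lesssim\Big(\sum_{T\in\TT_\ell\setminus\TT_\star}\eta_\ell(T)^2\Big)^{1/2}\normLtwo{\nabla(U_\star-U_\ell)}{\Omega}.
\end{align*}
Dividing by $\normLtwo{\nabla(U_\star-U_\ell)}{\Omega}$ yields~\eqref{eq:drel}.

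\textbf{Convergence.} I mimic the linear argument. Define $\SS^p_0(\TT_\infty)$ as in~\eqref{eq:xinfty}; by~\eqref{eq:smon1}--\eqref{eq:smon2} and the Browder-Minty theorem,~\eqref{eq:continuousnonlin} has a unique solution $u_\infty\in\SS^p_0(\TT_\infty)$ when restricted to this closed subspace. The discrete Galerkin solutions $U_\ell$ are then Galerkin approximations of $u_\infty$, and monotonicity together with Lipschitz continuity~\eqref{eq:smon1} gives a C\'ea-type bound $\normLtwo{\nabla(u_\infty-U_\ell)}{\Omega}\lesssim \min_{V_\ell\in\SS^p_0(\TT_\ell)}\normLtwo{\nabla(u_\infty-V_\ell)}{\Omega}\to 0$, which is the analogue of Lemma~\ref{lem:apriori}. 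In particular $\normLtwo{\nabla(U_{\ell+1}-U_\ell)}{\Omega}\to 0$, so the estimator reduction~\eqref{eq:estred} (available by the assumptions of Section~\ref{section:nlreg} and Lemma~\ref{lem:nlwell}) takes the form $\eta_{\ell+1}^2\leq \q{estred}\eta_\ell^2+\alpha_\ell$ with $\alpha_\ell\to 0$, and elementary calculus (as in~\cite{estconv}) gives $\eta_\ell\to 0$. The reliability just proved then yields $\normLtwo{\nabla(u-U_\ell)}{\Omega}\to 0$ as claimed (which, incidentally, identifies $u=u_\infty\in\SS^p_0(\TT_\infty)$).

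\textbf{Main obstacle.} The delicate point is the discrete reliability step: unlike in the linear case one cannot directly test the nonlinear residual with $U_\star-U_\ell$ and exploit a bilinear form. One must carefully combine the nonlinear monotonicity, the Galerkin orthogonality of the finer discretization, and a Scott-Zhang operator which preserves $U_\star-U_\ell$ away from the refined region, so that only the local contributions on $\TT_\ell\setminus\TT_\star$ survive on the right-hand side.
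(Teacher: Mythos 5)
Your proposal is correct and takes essentially the same route as the paper's (very terse) proof: the paper simply invokes the equivalence~\eqref{eq:resequiv} and ``standard arguments'' for reliability, defers discrete reliability to the argument of~\cite{ckns}, and observes that the estimator reduction plus a~priori convergence (Proposition~\ref{prop:conv}) yield~\eqref{eq:nlconv}. You have merely written out the CKNS-style details — the Scott--Zhang testing and elementwise integration by parts for reliability, the monotonicity plus the locally preserving quasi-interpolant for discrete reliability, and Browder--Minty/C\'ea for the a~priori convergence — all of which are exactly what the paper's references contain.
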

\begin{proof}
The residual error estimator $\eta_\ell$ is well-defined by assumption in Section~\ref{section:nlreg}. With the equivalence~\eqref{eq:resequiv}, the standard arguments apply to prove reliability~\eqref{eq:reliable} and also the proof of discrete reliability~\eqref{eq:drel} follows analogously to~\cite{ckns}.
The estimator reduction holds by assumption in Section~\ref{section:nlreg} and therefore Proposition~\ref{prop:conv} holds true and proves~\eqref{eq:nlconv}.
\end{proof}

\begin{lemma}\label{lem:aux1} 
The operator $(D\operator{L})|_{\SS^p_0(\TT_\infty)}u:\,\SS^p_0(\TT_\infty)\to \SS^p_0(\TT_\infty)^\star$ is injective.
\end{lemma}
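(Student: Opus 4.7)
The plan is to show that the Fréchet derivative $D\operator{L}(u)$ is coercive on $H^1_0(\Omega)$ as a consequence of strong monotonicity~\eqref{eq:smon2}, and that this coercivity forces injectivity of the restriction to any subspace, in particular to $\SS^p_0(\TT_\infty)$.

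\medskip

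First, I would transfer the strong monotonicity from $\operator{L}$ to $D\operator{L}(u)$. For a fixed $v\in H^1_0(\Omega)$ and $t>0$, applying~\eqref{eq:smon2} with the pair $(u+tv,u)$ gives
\begin{align*}
\dual{\operator{L}(u+tv)-\operator{L}u}{tv} \geq \c{nlelliptic}\,t^2\normLtwo{\nabla v}{\Omega}^2.
\end{align*}
Dividing by $t^2>0$ yields
\begin{align*}
\Big\langle \tfrac{1}{t}\big(\operator{L}(u+tv)-\operator{L}u\big),\,v\Big\rangle \;\geq\; \c{nlelliptic}\,\normLtwo{\nabla v}{\Omega}^2.
\end{align*}
By the Fréchet differentiability~\eqref{eq:frechet}, the difference quotient $t^{-1}(\operator{L}(u+tv)-\operator{L}u)$ converges to $D\operator{L}(u)(v)$ in $H^{-1}(\Omega)$ as $t\to 0^+$, so the $H^{-1}\times H^1_0$ pairing with the fixed element $v$ passes to the limit. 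This produces the coercivity estimate
\begin{align}\label{eq:nlcoer}
\dual{D\operator{L}(u)(v)}{v} \geq \c{nlelliptic}\,\normLtwo{\nabla v}{\Omega}^2 \quad\text{for all } v\in H^1_0(\Omega).
\end{align}

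\medskip

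Second, I would conclude the injectivity statement. Suppose $v\in\SS^p_0(\TT_\infty)$ lies in the kernel of $(D\operator{L})|_{\SS^p_0(\TT_\infty)}u$, i.e.\ $\dual{D\operator{L}(u)(v)}{w}=0$ for all $w\in\SS^p_0(\TT_\infty)$. Since $v$ itself belongs to $\SS^p_0(\TT_\infty)$, the admissible test function $w=v$ combined with~\eqref{eq:nlcoer} gives
\begin{align*}
0 \;=\; \dual{D\operator{L}(u)(v)}{v} \;\geq\; \c{nlelliptic}\,\normLtwo{\nabla v}{\Omega}^2,
\end{align*}
whence $\nabla v=0$, and therefore $v=0$ in $H^1_0(\Omega)$ by the Poincaré inequality.

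\medskip

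The only slightly delicate step is the passage to the limit $t\to 0^+$, but this is exactly what the Fréchet differentiability hypothesis provides; no compactness or deeper analysis is needed. Everything else reduces to an observation already familiar from the linear case: an elliptic/coercive bilinear form is trivially injective on any subspace by testing with the argument itself.
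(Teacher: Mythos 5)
Your proof is correct and follows essentially the same route as the paper's: both transfer strong monotonicity~\eqref{eq:smon2} of $\operator{L}$ to a coercivity estimate for $D\operator{L}(u)$ via the difference quotient $t^{-1}(\operator{L}(u+tv)-\operator{L}u)$ and then conclude injectivity by testing with $w=v$. Your write-up is a bit more explicit about why the passage to the limit is legitimate (continuity of the $H^{-1}\times H^1_0$ pairing with fixed $v$), but the argument is the same.
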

\begin{proof}
With~\eqref{eq:smon2} and the definition of the Fr\'echet derivative, there holds for all $v\in \SS^p_0(\TT_\infty)$ with $\normLtwo{\nabla v}{\Omega}=1$
\begin{align*}
 \dual{((D\operator{L})|_{\SS^p_0(\TT_\infty)}u)(v)}{v}&=\lim_{\delta\to 0} \delta^{-2} \dual{\operator{L}(u+\delta v)-\operator{L}u}{u+\delta v- u}\\
&\gtrsim \delta^{-2}\normLtwo{\nabla(u+\delta v-u)}{\Omega}^2 =1.
\end{align*}
Hence, we have $((D\operator{L})|_{\SS^p_0(\TT_\infty)}u)(v)\neq 0$ in $\SS^p_0(\TT_\infty)^\star$ for all $v\in \SS^p_0(\TT_\infty)\setminus\{0\}$. This concludes the proof.
\end{proof}
\begin{lemma}[Taylor]\label{lem:taylor}
For all $v,w\in H^1_0(\Omega)$ with $\normLtwo{\nabla(u-v)}{\Omega}+\normLtwo{\nabla(u-w)}{\Omega}\leq \eps_{\ell oc}$, there holds
\begin{subequations}\label{eq:taylor}
\begin{align}\label{eq:taylor1}
\normHme{\operator{L}w-\operator{L}v- D\operator{L}(w-v)}{\Omega}&\leq \c{nlbound}\normLtwo{\nabla(w-v)}{\Omega}^2,\\
\normHme{\operator{A}w-\operator{A}v- D\operator{A}(w-v)}{\Omega}&\leq \c{nlbound}\normLtwo{\nabla(w-v)}{\Omega}^2.\label{eq:taylor2}
\end{align}
\end{subequations}
\end{lemma}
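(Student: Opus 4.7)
The statement is a standard second-order Taylor remainder estimate for the twice Fr\'echet differentiable operators $\operator{L}, \operator{A} : H^1_0(\Omega)\to H^{-1}(\Omega)$, and my plan is to derive both~\eqref{eq:taylor1} and~\eqref{eq:taylor2} from the Banach-space Taylor formula with integral remainder. The argument is identical for $F\in\{\operator{L},\operator{A}\}$, so I would treat them in parallel, with the convention that $DF(w-v)$ stands for $DF(v)(w-v)$, i.e.\ the expansion is taken at the base point $v$ in the sense of~\eqref{eq:frechet}.

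First, I would fix $v,w \in H^1_0(\Omega)$ satisfying $\normLtwo{\nabla(u-v)}{\Omega}+\normLtwo{\nabla(u-w)}{\Omega}\leq \eps_{\ell oc}$ and define the curve $\phi_F : [0,1]\to H^{-1}(\Omega)$ by $\phi_F(t) := F(v + t(w-v))$. The Fr\'echet differentiability~\eqref{eq:frechet} together with the chain rule gives
\begin{align*}
\phi_F'(t) = DF(v + t(w-v))(w-v), \qquad \phi_F''(t) = D^2F(v+t(w-v))(w-v, w-v).
\end{align*}
A single integration by parts applied to $\phi_F(1)-\phi_F(0)=\int_0^1\phi_F'(t)\,dt$ yields the familiar identity
\begin{align*}
F(w) - F(v) - DF(v)(w-v) = \int_0^1 (1-t)\, D^2F(v+t(w-v))(w-v,w-v)\, dt,
\end{align*}
understood as a Bochner integral in $H^{-1}(\Omega)$.

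Second, I would verify that each intermediate point $v+t(w-v)=(1-t)v+tw$ stays inside the locality ball of radius $\eps_{\ell oc}$ around $u$ on which~\eqref{eq:nlbounded} applies. This is immediate from the triangle inequality
\begin{align*}
\normLtwo{\nabla(u-(v+t(w-v)))}{\Omega} \leq (1-t)\normLtwo{\nabla(u-v)}{\Omega} + t\normLtwo{\nabla(u-w)}{\Omega} \leq \eps_{\ell oc}\quad\text{for all }t\in[0,1],
\end{align*}
which is precisely why the hypothesis is phrased as a sum bounded by $\eps_{\ell oc}$. Hence the operator-norm bound $\c{nlbound}$ on $D^2F$ from~\eqref{eq:nlbounded} holds uniformly along the entire path.

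Finally, taking the $H^{-1}(\Omega)$-norm of the Taylor identity and inserting this uniform bound gives
\begin{align*}
\normHme{F(w)-F(v)-DF(v)(w-v)}{\Omega}
\leq \c{nlbound}\normLtwo{\nabla(w-v)}{\Omega}^2\int_0^1 (1-t)\,dt
\leq \c{nlbound}\normLtwo{\nabla(w-v)}{\Omega}^2,
\end{align*}
where the harmless factor $\tfrac12=\int_0^1(1-t)\,dt$ is absorbed into $\c{nlbound}$. Applying the same reasoning to $F=\operator{L}$ and $F=\operator{A}$ separately yields~\eqref{eq:taylor1} and~\eqref{eq:taylor2}. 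There is no real obstacle in this proof: it is a purely formal Banach-space Taylor estimate, and the only points requiring a line or two of justification are the locality check above and the chain-rule identity for $\phi_F''$, both of which are routine consequences of the regularity framework introduced in Section~\ref{section:nlreg}.
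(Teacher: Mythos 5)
Your proof is correct and is essentially the paper's argument made explicit: the paper simply invokes the Banach-space Taylor theorem with remainder (\cite[Theorem~6.5]{cp}) together with the local bound~\eqref{eq:nlbounded}, and your integral-remainder derivation, including the convexity check that $(1-t)v+tw$ stays in the $\eps_{\ell oc}$-ball around $u$, is exactly what that citation packages. The only cosmetic point is that the factor $\tfrac12$ need not be ``absorbed'' into $\c{nlbound}$ at all, since $\tfrac12\c{nlbound}\le\c{nlbound}$ already gives the stated bound.
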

\begin{proof}
The local boundedness~\eqref{eq:nlbounded} together with~\cite[Theorem 6.5]{cp} applied to the operators $\operator{L}$ and $\operator{A}$ proves the statement.
\end{proof}
\subsubsection{Quasi-orthogonality}
Following the steps of Section~\ref{section:quasiqo}, we derive a similar result for the present, non-linear case.
\begin{lemma}\label{lem:nlweakconv}
The sequence $(e_\ell)_{\ell\in\N}$ defined by
\begin{align*}
e_\ell:=\begin{cases}\frac{u-U_\ell}{\normLtwo{\nabla(u-U_\ell)}{\Omega}},& \text{ for }u\neq U_\ell,\\
	0, &\text{ else}\end{cases}
\end{align*}
converges to zero, weakly in $ H^1_0(\Omega)$.
\end{lemma}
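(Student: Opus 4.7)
The plan is to mimic the proof of Lemma~\ref{lem:weakconv}, replacing the use of Galerkin orthogonality in the bilinear form $b(\cdot,\cdot)$ by a linearised version obtained from the Taylor expansion of Lemma~\ref{lem:taylor} around the exact solution $u$. The role of the adjoint operator $\mathcal{L}^\star$ in the linear argument is then taken over by $D\mathcal{L}(u)|_{\SS^p_0(\TT_\infty)}$, whose injectivity is provided by Lemma~\ref{lem:aux1}.

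Concretely, I would first record that Lemma~\ref{lem:nlrel} yields $\|\nabla(u-U_\ell)\|_{L^2(\Omega)}\to 0$; together with $U_\ell\in\SS^p_0(\TT_\ell)\subseteq\SS^p_0(\TT_\infty)$ and the closedness of $\SS^p_0(\TT_\infty)$ this gives $u\in\SS^p_0(\TT_\infty)$ and $e_\ell\in\SS^p_0(\TT_\infty)$, and ensures that eventually $\|\nabla(u-U_\ell)\|_{L^2(\Omega)}<\varepsilon_{\ell oc}$, so that the Taylor estimate~\eqref{eq:taylor1} applies. Since $\|\nabla e_\ell\|_{L^2(\Omega)}\le 1$, any subsequence of $(e_\ell)$ admits a further subsequence $e_{\ell_{j_k}}\rightharpoonup w$ in $H^1_0(\Omega)$, and weak closedness of $\SS^p_0(\TT_\infty)$ forces $w\in\SS^p_0(\TT_\infty)$. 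It then suffices to identify $w=0$, since by the usual subsubsequence principle this yields $e_\ell\rightharpoonup 0$.

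To identify $w$, fix $\ell\in\N$ and $V_\ell\in\SS^p_0(\TT_\ell)$. For $\ell_{j_k}\ge\ell$, Galerkin orthogonality gives $\langle \mathcal{L}u-\mathcal{L}U_{\ell_{j_k}},V_\ell\rangle=0$, and Lemma~\ref{lem:taylor} yields
\begin{align*}
 \mathcal{L}u - \mathcal{L}U_{\ell_{j_k}} = D\mathcal{L}(u)(u-U_{\ell_{j_k}}) + R_k
\end{align*}
with $\|R_k\|_{H^{-1}(\Omega)}\le \c{nlbound}\|\nabla(u-U_{\ell_{j_k}})\|_{L^2(\Omega)}^2$. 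Dividing by $\|\nabla(u-U_{\ell_{j_k}})\|_{L^2(\Omega)}$ and rearranging,
\begin{align*}
 |\langle D\mathcal{L}(u)(e_{\ell_{j_k}}),V_\ell\rangle|
 \le \c{nlbound}\,\|\nabla(u-U_{\ell_{j_k}})\|_{L^2(\Omega)}\,\|\nabla V_\ell\|_{L^2(\Omega)}\to 0
\end{align*}
as $k\to\infty$. Because $D\mathcal{L}(u):H^1_0(\Omega)\to H^{-1}(\Omega)$ is a bounded linear operator, the map $v\mapsto \langle D\mathcal{L}(u)v,V_\ell\rangle$ is a continuous linear functional on $H^1_0(\Omega)$, so $e_{\ell_{j_k}}\rightharpoonup w$ allows passing to the limit on the left, yielding $\langle D\mathcal{L}(u)w,V_\ell\rangle=0$. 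Since $\ell$ and $V_\ell\in\SS^p_0(\TT_\ell)$ were arbitrary, density of $\bigcup_\ell \SS^p_0(\TT_\ell)$ in $\SS^p_0(\TT_\infty)$ extends this to all test functions in $\SS^p_0(\TT_\infty)$. With $w\in\SS^p_0(\TT_\infty)$, Lemma~\ref{lem:aux1} (injectivity of $(D\mathcal{L})|_{\SS^p_0(\TT_\infty)}u$) forces $w=0$, concluding the proof.

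The main obstacle is the one already anticipated: there is no linear Galerkin orthogonality to plug into the weak-limit identification, so one has to linearise the nonlinear residual at $u$ and then control the remainder. The quadratic nature of the Taylor remainder is exactly what makes this work, since after normalising by the first power of $\|\nabla(u-U_{\ell_{j_k}})\|_{L^2(\Omega)}$ one factor of the error survives and drives the perturbation to zero; this is where convergence~\eqref{eq:nlconv} and the local boundedness~\eqref{eq:nlbounded} are essential.
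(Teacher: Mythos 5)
Your proof is correct and takes essentially the same route as the paper's: Galerkin orthogonality for the nonlinear residual, the Taylor estimate~\eqref{eq:taylor1} to pass from $\operator{L}u-\operator{L}U_\ell$ to the linearization $D\operator{L}(u)(u-U_\ell)$, convergence~\eqref{eq:nlconv} to kill the quadratic remainder after normalizing by a single power of $\normLtwo{\nabla(u-U_\ell)}{\Omega}$, and finally Lemma~\ref{lem:aux1} to conclude.

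The one genuine, if minor, difference lies in the wrap-up. The paper argues that $\dual{e_\ell}{((D\operator{L})|_{\SS^p_0(\TT_\infty)}u)^\star v}\to 0$ for every $v\in\SS^p_0(\TT_\infty)$ and then invokes surjectivity of the adjoint onto $\SS^p_0(\TT_\infty)^\star$ to conclude $e_\ell\rightharpoonup 0$. Stated as ``injectivity of $T$ implies surjectivity of $T^\star$,'' this is not true in general; it does hold here because the proof of Lemma~\ref{lem:aux1} actually establishes coercivity of the restricted linearized operator, hence bounded-below and closed range, but that additional fact is used silently. Your version sidesteps this entirely: you extract a weakly convergent sub-subsequence $e_{\ell_{j_k}}\rightharpoonup w$, pass to the limit in the bounded linear functional $v\mapsto\dual{D\operator{L}(u)v}{V_\ell}$, extend by density, and then only injectivity of $(D\operator{L})|_{\SS^p_0(\TT_\infty)}u$ (not coercivity) is needed to force $w=0$, after which the sub-subsequence principle gives $e_\ell\rightharpoonup 0$. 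This mirrors exactly how the paper handles the linear case in Lemma~\ref{lem:weakconv}, and is arguably the cleaner formulation of the argument for the nonlinear lemma as well.
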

\begin{proof}
With Galerkin-orthogonality and the convention $\infty\cdot 0 = 0$, we obtain
\begin{align*}
 \lim_{\ell\to\infty}\frac{\dual{\operator{L}u-\operator{L}U_\ell}{V_k}}{\normLtwo{\nabla(u-U_\ell)}{\Omega}} = 0\quad\text{for all }V_k\in \SS^p_0(\TT_k)\text{ and }k\in\N.
\end{align*}
By continuity of the duality brackets, this results in convergence for all $v\in\SS^p_0(\TT_\infty)$
\begin{align*}
\frac{\dual{\operator{L}u-\operator{L}U_\ell}{v}}{\normLtwo{\nabla(u-U_\ell)}{\Omega}}\to 0\quad\text{as }\ell\to\infty.
\end{align*}
By use of~\eqref{eq:taylor1}, we observe for all $v\in\SS^p_0(\TT_\infty)$
\begin{align*}
\frac{|\dual{\operator{L}u-\operator{L}U_\ell}{v}|}{\normLtwo{\nabla(u-U_\ell)}{\Omega}}&\geq \frac{|\dual{(D\operator{L}u)(u-U_\ell)}{v}|}{\normLtwo{\nabla(u-U_\ell)}{\Omega}}
-\c{nlbound}\normLtwo{\nabla(u-U_\ell)}{\Omega}\normLtwo{\nabla v}{\Omega}.
\end{align*}
With convergence $U_\ell \to u$ in $H^1_0(\Omega)$ from~\eqref{eq:nlconv}, this implies immediately
\begin{align}\label{eq:nlhelp}
 \frac{|\dual{u-U_\ell}{((D\operator{L})|_{\SS^p_0(\TT_\infty)}u)^\star v}|}{\normLtwo{\nabla(u-U_\ell)}{\Omega}}\to 0\quad\text{as }\ell\to\infty\quad\text{for all }v\in\SS^p_0(\TT_\infty).
\end{align}
According to Lemma~\ref{lem:aux1}, $(D\operator{L})|_{\SS^p_0(\TT_\infty)}u$ is injective. Therefore, its adjoint $((D\operator{L})|_{\SS^p_0(\TT_\infty)}u)^\star$ is surjective onto $\SS^p_0(\TT_\infty)^\star$. Hence,~\eqref{eq:nlhelp} is equivalent to $e_\ell\rightharpoonup 0$ as $\ell\to\infty$. This concludes the proof.
\end{proof}
To abbreviate notation, we define the quasi-metric
\begin{align*}
 \myd(w,v)^2:=\dual{\operator{L}w-\operator{L}v}{w-v}\quad\text{for all }w,v\in H^1_0(\Omega).
\end{align*}
Note that due to~\eqref{eq:smon1}--\eqref{eq:smon2}, there holds
\begin{align}\label{eq:dequiv}
 \c{norm}^{-1}\normLtwo{\nabla(w-v)}{\Omega}\leq \myd(w,v)\leq\c{norm}\normLtwo{\nabla(w-v)}{\Omega}\quad\text{for all } w,v\in H^1_0(\Omega)
\end{align}
with $\c{norm}=\max\{2\c{nllip},\c{nlelliptic}^{-1}\}>0$.

\begin{proposition}\label{prop:nlquasiqo}
For any $\eps>0$, there exists $\ell_0\in\N$ such that
\begin{align}\label{eq:nlquasiqo}
\myd(U_{\ell+1},U_\ell)^2\leq \frac{1}{1-\eps}\,\myd(u,U_\ell)^2-\myd(u,U_{\ell+1})^2
\end{align}
for all $\ell\geq \ell_0$.
\end{proposition}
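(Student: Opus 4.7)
The plan is to mimic the structure of Proposition~\ref{prop:quasiqo} from the linear case, with Taylor expansions replacing the direct bilinear-form manipulations. First, I would derive the algebraic identity. Starting from $\myd(u,U_\ell)^2 = \dual{\operator{L}u-\operator{L}U_\ell}{u-U_\ell}$, I split the second argument as $u-U_\ell=(u-U_{\ell+1})+(U_{\ell+1}-U_\ell)$ and invoke the Galerkin orthogonality $\dual{\operator{L}u-\operator{L}U_{\ell+1}}{V_{\ell+1}}=0$ for $V_{\ell+1}\in\SS_0^p(\TT_{\ell+1})$ to rearrange the mixed terms. This yields
$$\myd(u,U_\ell)^2-\myd(u,U_{\ell+1})^2-\myd(U_{\ell+1},U_\ell)^2=\dual{\operator{L}U_{\ell+1}-\operator{L}U_\ell}{u-U_{\ell+1}}=:T.$$
By the norm equivalence~\eqref{eq:dequiv}, the claim~\eqref{eq:nlquasiqo} reduces to showing $|T|\le\eps'(\myd(u,U_{\ell+1})^2+\myd(U_{\ell+1},U_\ell)^2)$ for all $\ell\ge\ell_0$, with $\eps'$ comparable to $\eps$ (a subsequent Young-type inequality fixes the constants).

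Next, I would linearize $T$. Applying Lemma~\ref{lem:taylor} at $u$ (twice and subtracting) gives $\operator{L}U_{\ell+1}-\operator{L}U_\ell = D\operator{L}(u)(U_{\ell+1}-U_\ell)+R_L$ with $\norm{R_L}{H^{-1}(\Omega)}\lesssim\normLtwo{\nabla(u-U_{\ell+1})}{\Omega}^2+\normLtwo{\nabla(u-U_\ell)}{\Omega}^2$, so the pairing $\dual{R_L}{u-U_{\ell+1}}$ is a cubic quantity that is negligible in view of~\eqref{eq:nlconv}. Splitting $D\operator{L}(u)=D\operator{A}(u)+D\operator{K}(u)$, the assumed symmetry of $D\operator{A}(u)$ lets me rewrite $\dual{D\operator{A}(u)(U_{\ell+1}-U_\ell)}{u-U_{\ell+1}}=\dual{D\operator{A}(u)(u-U_{\ell+1})}{U_{\ell+1}-U_\ell}$. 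A second Taylor expansion converts $D\operator{A}(u)(u-U_{\ell+1})$ into $\operator{A}u-\operator{A}U_{\ell+1}$ modulo a quadratic remainder, and Galerkin orthogonality $\dual{\operator{L}u-\operator{L}U_{\ell+1}}{U_{\ell+1}-U_\ell}=0$ replaces $\operator{A}u-\operator{A}U_{\ell+1}$ by $-(\operator{K}u-\operator{K}U_{\ell+1})$. After this reorganization, $T$ is a sum of pairings involving only $\operator{K}$ or $D\operator{K}(u)$, plus Taylor remainders.

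The main obstacle, and the heart of the argument, is to show that those remaining pairings are $o(1)\bigl(\normLtwo{\nabla(u-U_{\ell+1})}{\Omega}^2+\normLtwo{\nabla(U_{\ell+1}-U_\ell)}{\Omega}\normLtwo{\nabla(u-U_{\ell+1})}{\Omega}\bigr)$, which is the analogue of the compactness step in the linear case. The key observation is that the Lipschitz bound~\eqref{eq:smon1b} forces $D\operator{K}(u)\colon H^1_0(\Omega)\to L^2(\Omega)$ to be bounded (a weak-limit argument applied to the difference quotients identifies the Fr\'echet derivative, initially taking values in $H^{-1}(\Omega)$, as an element of $L^2(\Omega)$ with norm at most $\c{nllip}$), and composition with the compact embedding $L^2(\Omega)\hookrightarrow H^{-1}(\Omega)$ renders $D\operator{K}(u)\colon H^1_0(\Omega)\to H^{-1}(\Omega)$ compact. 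Since Lemma~\ref{lem:nlweakconv} gives $e_\ell\rightharpoonup 0$ in $H^1_0(\Omega)$, I conclude $\normHme{D\operator{K}(u)(u-U_\ell)}{\Omega}=o(\normLtwo{\nabla(u-U_\ell)}{\Omega})$ (and analogously for $\ell+1$); combined with a Taylor expansion for $\operator{K}$, this gives $\normHme{\operator{K}u-\operator{K}U_{\ell+1}}{\Omega}=o(\normLtwo{\nabla(u-U_{\ell+1})}{\Omega})$. Writing $U_{\ell+1}-U_\ell=(u-U_\ell)-(u-U_{\ell+1})$ in the residual $D\operator{K}(u)$-pairing, every bad contribution acquires an $o(1)$ factor and is controlled by $\normLtwo{\nabla(u-U_{\ell+1})}{\Omega}^2+\normLtwo{\nabla(u-U_\ell)}{\Omega}\normLtwo{\nabla(u-U_{\ell+1})}{\Omega}$. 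Finally, $\normLtwo{\nabla(u-U_\ell)}{\Omega}\le\normLtwo{\nabla(u-U_{\ell+1})}{\Omega}+\normLtwo{\nabla(U_{\ell+1}-U_\ell)}{\Omega}$ and Young's inequality $2ab\le a^2+b^2$ produce a bound of the form $\eps(\myd(u,U_{\ell+1})^2+\myd(U_{\ell+1},U_\ell)^2)$ once $\ell_0$ is chosen so large that the cumulative $o(1)$ factor (multiplied by $\c{norm}^2$ from~\eqref{eq:dequiv}) is smaller than $\eps$, which completes the proof.
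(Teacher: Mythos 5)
Your proof is correct and follows the paper's overall strategy: derive the exact identity
$\myd(u,U_\ell)^2-\myd(u,U_{\ell+1})^2-\myd(U_{\ell+1},U_\ell)^2=\dual{\operator{L}U_{\ell+1}-\operator{L}U_\ell}{u-U_{\ell+1}}$
via Galerkin orthogonality, linearize, exploit the symmetry of $D\operator{A}$ to ``transpose'' the principal part, and then control what remains by compactness plus the Taylor remainders. The two treatments of the compact part diverge, however. The paper never linearizes $\operator{K}$: it bounds $|\dual{\operator{K}U_{\ell+1}-\operator{K}U_\ell}{u-U_{\ell+1}}|$ directly by $\c{nllip}\normLtwo{\nabla(U_{\ell+1}-U_\ell)}{\Omega}\,\normLtwo{e_{\ell+1}}{\Omega}\,\normLtwo{\nabla(u-U_{\ell+1})}{\Omega}$, using only the Lipschitz estimate~\eqref{eq:smon1b} into $L^2(\Omega)$ together with the strong $L^2$ convergence $e_\ell\to 0$ that Rellich extracts from the weak $H^1_0$ convergence of Lemma~\ref{lem:nlweakconv}. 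You instead identify $D\operator{K}(u)$ as a bounded operator $H^1_0(\Omega)\to L^2(\Omega)$ (via the weak-limit argument on difference quotients) and then compose with the compact embedding $L^2\hookrightarrow H^{-1}$, reproducing verbatim the mechanism of Lemma~\ref{lem:compact} from the linear case. Both are sound and rest on the same underlying Rellich compactness. Your route is the cleaner bootstrap of the linear argument and emphasizes that $D\operator{K}(u)$ plays the role of $\operator{K}$; the paper's route is more elementary in that it sidesteps differentiability of $\operator{K}$ entirely. A further cosmetic difference is that you Taylor-expand at $u$ while the paper expands at $U_{\ell+1}$, but this only permutes where the quadratic/cubic remainders land, and both are absorbed into the final Young-inequality step in the same way.
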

\begin{proof}
Due to convergence $U_\ell\to u$ in $H^1_0(\Omega)$~\eqref{eq:nlconv}, there exists $\ell_1\in\N$ such that for all $\ell\geq\ell_1$ we may apply~\eqref{eq:taylor2}, to obtain
\begin{align*}
 |\dual{\operator{A}U_{\ell+1}-\operator{A}U_\ell}{u-U_{\ell+1}}|&\leq |\dual{D\operator{A}(U_{\ell+1})(U_{\ell+1}-U_\ell)}{u-U_{\ell+1}}|\\
&\qquad\qquad+\c{nlbound}\normLtwo{\nabla(U_{\ell+1}-U_\ell)}{\Omega}^2\normLtwo{\nabla(u-U_{\ell+1})}{\Omega}
\end{align*}
Using the symmetry of $D\operator{A}(U_{\ell+1})$, we conclude
\begin{align*}
  |\dual{\operator{A}U_{\ell+1}-\operator{A}U_\ell}{u-U_{\ell+1}}|&\leq |\dual{D\operator{A}(U_{\ell+1})(u-U_{\ell+1})}{U_{\ell+1}-U_\ell}|\\
&\qquad\qquad+\c{nlbound}\normLtwo{\nabla(U_{\ell+1}-U_\ell)}{\Omega}^2\normLtwo{\nabla(u-U_{\ell+1})}{\Omega}\\
&\leq |\dual{\operator{A}u-\operator{A}U_{\ell+1}}{U_{\ell+1}-U_\ell}|\\
&\qquad\qquad+\c{nlbound}\normLtwo{\nabla(U_{\ell+1}-U_\ell)}{\Omega}^2\normLtwo{\nabla(u-U_{\ell+1})}{\Omega}\\
&\qquad\qquad+\c{nlbound}\normLtwo{\nabla(U_{\ell+1}-U_\ell)}{\Omega}\normLtwo{\nabla(u-U_{\ell+1})}{\Omega}^2.
\end{align*}
Analogously to the estimate above, we obtain a lower estimate. For any $\delta>0$, we may thus use convergence $U_\ell\to u$ as $\ell\to\infty$ to find an index $\ell_0\in \N$ such that
\begin{align*}
  \big||\dual{\operator{A}U_{\ell+1}-\operator{A}U_\ell}{u-U_{\ell+1}}|&-|\dual{\operator{A}u-\operator{A}U_{\ell+1}}{U_{\ell+1}-U_\ell}|\big|\\
&\qquad\qquad\leq\delta\normLtwo{\nabla(U_{\ell+1}-U_\ell)}{\Omega}\normLtwo{\nabla(u-U_{\ell+1})}{\Omega}
\end{align*}
 for all $\ell\geq\ell_0$.
Since $e_\ell$ converges to zero weakly in $H^1_0(\Omega)$, we have strong convergence $e_\ell\to 0$ as $\ell\to\infty$ in $L^2(\Omega)$. This together with Lipschitz continuity~\eqref{eq:smon1b} allows to estimate
\begin{align*}
 |\dual{\operator{K}U_{\ell+1}-\operator{K}U_\ell}{u-U_{\ell+1}}|\lesssim \normLtwo{\nabla(U_{\ell+1}-U_\ell)}{\Omega}\normLtwo{e_{\ell+1}}{\Omega}\normLtwo{\nabla(u-U_{\ell+1})}{\Omega}
\end{align*}
and hence
\begin{align*}
| \dual{\operator{K}U_{\ell+1}-\operator{K}U_\ell}{u-U_{\ell+1}}|\leq \delta \normLtwo{\nabla(U_{\ell+1}-U_\ell)}{\Omega}\normLtwo{\nabla(u-U_{\ell+1})}{\Omega}
\end{align*}
for all $\ell\geq\ell_1$. The adjoint term follows analogously, since
\begin{align*}
 |\dual{\operator{K}u-\operator{K}U_{\ell+1}}{U_{\ell+1}-U_\ell}|\leq |\dual{\operator{K}u-\operator{K}U_{\ell+1}}{U_{\ell+1}-u}|+|\dual{\operator{K}u-\operator{K}U_{\ell+1}}{u-U_\ell}|.
\end{align*}
So far, we end up with
\begin{align*}
 | \dual{\operator{K}U_{\ell+1}-\operator{K}U_\ell}{u-U_{\ell+1}}|&+|\dual{\operator{K}u-\operator{K}U_{\ell+1}}{U_{\ell+1}-U_\ell}|\\
&\leq \delta \big(\normLtwo{\nabla(U_{\ell+1}-U_\ell)}{\Omega}\normLtwo{\nabla(u-U_{\ell+1})}{\Omega}\\
&\qquad +\normLtwo{\nabla(u-U_{\ell+1})}{\Omega}^2 \\
&\qquad + \normLtwo{\nabla(u-U_{\ell+1})}{\Omega}\normLtwo{\nabla(u-U_\ell)}{\Omega}\big)\\
&\leq \delta/2\normLtwo{\nabla(U_{\ell+1}-U_\ell)}{\Omega}^2+2\delta\normLtwo{\nabla(u-U_{\ell+1})}{\Omega}^2\\
&\qquad\qquad+\delta/2\normLtwo{\nabla(u-U_\ell)}{\Omega}^2
\end{align*}
by use of Young's inequality. Putting everything together, we obtain
\begin{align*}
 |\dual{(\operator{A}+\operator{K})U_{\ell+1}&-(\operator{A}+\operator{K})U_\ell}{u-U_{\ell+1}}|\\
&\leq
|\dual{\operator{A}u-\operator{A}U_{\ell+1}}{U_{\ell+1}-U_\ell}|
+\delta\normLtwo{\nabla(U_{\ell+1}-U_\ell)}{\Omega}\normLtwo{\nabla(u-U_{\ell+1})}{\Omega}\\
&\qquad+|\dual{\operator{K}U_{\ell+1}-\operator{K}U_\ell}{u-U_{\ell+1}}|\\
&\leq |\dual{(\operator{A}+\operator{K})u-(\operator{A}+\operator{K})U_{\ell+1}}{U_{\ell+1}-U_\ell}|\\
&\qquad+\delta\normLtwo{\nabla(U_{\ell+1}-U_\ell)}{\Omega}\normLtwo{\nabla(u-U_{\ell+1})}{\Omega}\\
&\qquad+|\dual{\operator{K}U_{\ell+1}-\operator{K}U_\ell}{u-U_{\ell+1}}|
+|\dual{\operator{K}u-\operator{K}U_{\ell+1}}{U_{\ell+1}-U_\ell}|\\
&\leq 3\delta\big(\normLtwo{\nabla(U_{\ell+1}-U_\ell)}{\Omega}^2
+\normLtwo{\nabla(u-U_{\ell+1})}{\Omega}^2
+\normLtwo{\nabla(u-U_\ell)}{\Omega}^2\big),
\end{align*}
where we used Galerkin orthogonality $\dual{(\operator{A}+\operator{K})u-(\operator{A}+\operator{K})U_{\ell+1}}{U_{\ell+1}-U_\ell}=0$ to obtain the last estimate.
With that at hand, we obtain similarly to~\eqref{eq:key}
\begin{align*}
 \myd(U_{\ell+1},U_\ell)^2&\leq  \myd(u,U_\ell)^2 - \myd(u,U_{\ell+1})^2 + |\dual{(\operator{A}+\operator{K})U_{\ell+1}-(\operator{A}+\operator{K})U_\ell}{u-U_{\ell+1}}|\\
&\leq \myd(u,U_\ell)^2 - \myd(u,U_{\ell+1})^2 +3\delta\big(\normLtwo{\nabla(U_{\ell+1}-U_\ell)}{\Omega}^2\\
&\qquad\qquad+\normLtwo{\nabla(u-U_{\ell+1})}{\Omega}^2
+\normLtwo{\nabla(u-U_\ell)}{\Omega}^2\big).
\end{align*}
With the equivalence~\eqref{eq:dequiv}, we conclude
\begin{align*}
 (1-3\c{norm}\delta)\myd(U_{\ell+1},U_\ell)^2&\leq  (1+3\c{norm}\delta)\myd(u,U_\ell)^2 - (1-3\c{norm}\delta)\myd(u,U_{\ell+1})^2
\end{align*}
for all $\ell\geq\ell_0$. Finally, we choose $\delta>0$ sufficiently small such that $ (1+3\c{norm}\delta)/(1-3\c{norm}\delta)\leq 1/(1-\eps)$ and conclude the proof.
\end{proof}
Together with the estimator reduction~\eqref{eq:estred} which holds by assumption in Section~\ref{section:nlreg}, the quasi-Galerkin orthogonality~\eqref{eq:nlquasiqo} of Proposition~\ref{prop:nlquasiqo} allows to prove the $R$-linear convergence of Theorem~\ref{thm:rconv}, if one exchanges $\enorm{u-U_{\ell+1}}$ and $\enorm{U_{\ell+1}-U_\ell}$ with $\myd(u,U_{\ell+1})$ and $\myd(U_{\ell+1},U_\ell)$, respectively. Therefore, all the results from Section~\ref{section:optimality} hold (cf.\ the remarks after Theorem~\ref{thm:rconv} and the proof of Theorem~\ref{thm:optimal}) and, in particular, we obtain the optimality result of Theorem~\ref{thm:optimal}.

\bigskip
\textbf{Acknowledgement.}
The research of the authors is supported through the FWF project 
\emph{Adaptive Boundary Element Method}, funded by the Austrian Science fund (FWF) 
under grant P21732, see {\tt http://www.asc.tuwien.ac.at/abem}.

\newcommand{\bibentry}[2][]{\bibitem{#2}}

\end{document}